\newtheorem{theorem}{Theorem}[section]
 \newtheorem{corollary}[theorem]{Corollary}
 \newtheorem{lemma}[theorem]{Lemma}
 \newtheorem{proposition}[theorem]{Proposition}
 \theoremstyle{definition}
 \newtheorem{definition}[theorem]{Definition}
 \theoremstyle{remark}
 \newtheorem{remark}[theorem]{Remark}
  \newtheorem{ex}[theorem]{Example}
 \numberwithin{equation}{section}
\def \bC {\mathbb C}
\def \bN {\mathbb N}
\def \bR {\mathbb R}
\def \bT {\mathbb T}
\def \bZ {\mathbb Z}
\def \cA {\mathcal A}
\def \cD {\mathcal D}
\def \cF {\mathcal F}
\def \cH {\mathcal H}
\def \cL {\mathcal L}
\def \cM {\mathcal M}
\def \cR {\mathcal R}
\def \cS {\mathcal S}
\def \fg {\mathfrak g}
\def \fv {\mathfrak v}
\def \fU {\mathfrak U}
\def \sL{\mathscr L}
\def \sE{\mathscr E}
\def \tr {{\rm Tr}}
\def \ad {{\rm ad}}
\def \id {\text{\rm I}}
\def \Op {{\rm Op}}
\def \supp{{{\rm supp}}}
\def \Gh {{\widehat G}}
\def \eps {\varepsilon}
\def \vol {{\rm vol}}
\def \sp {{\rm sp}}
\begin{document}
\author[V. Fischer]{V\'eronique Fischer}
\address[V. Fischer]%
{University of Bath, Department of Mathematical Sciences, Bath, BA2 7AY, UK} 
\email{v.c.m.fischer@bath.ac.uk}

\title{semiclassical analysis on compact nil-manifolds}

\subjclass[2010]{
43A85, 43A32, 22E30, 35H20,
35P20, 81Q10}

\keywords{Harmonic analysis on nilpotent Lie groups
and nil-manifolds,  semi-classical analysis.}

\maketitle

\begin{abstract}
In this paper, 
we define and study semi-classical analysis and semi-classical limits
on compact nil-manifolds. 
As an application, we obtain properties of quantum limits for sub-Laplacians in this context, and more generally for positive Rockland operators.  
\end{abstract}

\makeatletter
\renewcommand\l@subsection{\@tocline{2}{0pt}{3pc}{5pc}{}}
\makeatother

\tableofcontents

\section{Introduction}


The analysis of hypoelliptic operators  has made fundamental progress 
over the last twenty years, 
see e.g. \cite{androulidakis2022pseudodifferential,PongeAMS2008,vanErp} and references therein. 
The underlying methods and ideas are so comprehensive 
that they extend beyond the class of sub-Riemannian manifolds to the setting of H\"ormander's family of vector fields. 
In fact, a crucial tool for these results has turned out to be the generalisation of Connes' tangent groupoid \cite{androulidakis2022pseudodifferential,vanErp+Yuncken,Dave+Haller2020,Dave+Haller2022}, in some sense reuniting  the lifting theory of Stein and his collaborators (see, for example, \cite{RotschildStein} or \cite{Street}) with more geometric concepts, such as tangent groups and nilpotentization \cite{subR,Montgomery}. 
However, these approaches are seldom symbolic in a way that would make them appropriate for general questions in spectral geometry and micro-local and semi-classical analysis, such as quantum ergodicity.

The primary objective of this paper is to understand quantum limits 
for a large class of hypoelliptic operators in an accessible setting. 
This class encompasses intrinsic sub-Laplacians on compact nil-manifolds, without imposing any additional assumptions beyond the existence of a Carnot structure on the underlying group. 
The methods and results presented herein align with the symbolic approach of semi-classical analysis in sub-Riemannian and subelliptic settings and utilising the representation theory of nilpotent Lie groups, see \cite{bologna} and references therein. 
Following the group case studied in \cite{FFPisa,FFchina,FFJST}, 
the next natural context to test this approach
 is to explore the context of compact nil-manifolds
 \cite{Fermanian+Letrouit,withSoren}. 
 These manifolds serve as the analogues of the torus $\bT^n$ (the quotient of $\bR^n$ by a lattice) in spectral Euclidean geometry. 
 Already the (flat and commutative) tori  provide a rich framework for the conventional (i.e., Euclidean and commutative) semi-classical or micro-local analysis. For instance, references such as \cite{Nalini+Fab,Jakobson} offer valuable insights into this topic.
Moreover, reduced quantum ergodicity for the canonical Laplacian on the flat torus can be obtained with elementary means involving semi-classical analysis in this context, see \cite[Section 3.1]{Nalini+Clo+Faure} and Theorem \ref{thm_QETn} in this paper  for a precise statement. 
The starting point and initial motivation behind  this paper was to understand whether any form of quantum ergodicity  could be obtained on nil-manifolds in the same way. 
It seems the non-commutativity makes the situation much more complicated (see Remark \ref{rem_originalmotivation}, Sections \ref{subsubsec_obsgrHtype} and \ref{subsec_comments}).

    
The pseudo-differential theory presented in this paper relies on the Kohn-Nirenberg quantization, which is always available for groups satisfying Dixmier's Plancherel theorem (see e.g. \cite{LinoCLoMe}). 
This leads to replacing the conventional phase space in commutative settings with $G\times \Gh$, where $\Gh$ is the unitary dual of $G$. 
In such a highly non-commutative setting, scalar-valued functions on the cotangent space are no longer suitable symbols. Instead, 
we consider symbols that are fields of operators (or equivalently measurable sections) on $G\times \Gh$ or $M\times \Gh$.
Graded nilpotent Lie groups naturally possess dilations, which provide a semi-classical scaling on the group and its unitary dual, and therefore for the Kohn-Nirenberg quantization $\Op^{(\eps)}$.
The case of symbol classes of H\"ormander type was studied in \cite{withSoren} with applications to semiclassical Weyl laws. In this paper, we focus on the semiclassical analysis based on smoothing symbols, especially the study of  further asymptotics and semiclassical measures.

  
 \smallskip 
  
 The paper aims to develop the theory supporting semi-classical measures in this context, particularly focusing on quantum limits for sub-Laplacians and more generally positive Rockland operators.
In the Euclidean or Riemannian case, 
the existence of semi-classical or micro-local defect measures is traditionally proven using 
G\aa rding inequalities in pseudo-differential theories 
(for example, see \cite[Chapter 5]{zworski} and \cite{Gerard} respectively).
However, an alternative proof of the existence of Euclidean micro-local defect measures (or H-measures) relies solely on multiplication operators and Fourier multipliers (see \cite{Tartar}).
A potentially deeper and unifying argument, essentially due to Vladimir Georgescu, has emerged in recent years. This argument relies on the properties of states of $C^*$-algebras, particularly the $C^*$-algebra generated by the semi-classical pseudo-differential calculus.
The justification for the positivity  comes naturally from the symbolic calculus, the positivity of a state, and the fact that the limit of a sequence of states is itself a state.
The arguments and comments in the Euclidean case and in the context of nilpotent Lie groups can be found in the expository article \cite{bologna}.
In non-commutative settings, such as those provided by sub-Riemannian contexts or nilpotent Lie groups, we cannot expect any adapted notion of micro-local or semi-classical measures to be scalar-valued. Instead, they should be operator-valued, like the symbols.
Such a notion was introduced on nilpotent Lie groups in \cite{FFchina,FFPisa}, and further developed in \cite{FFJST} using the approach via states of $C^*$-algebras.
In this paper, we demonstrate that the same arguments extend to the nil-manifold setting:

\begin{theorem}
	\label{thm_scL}
	We consider $\Gamma$  a discrete co-compact subgroup of 
a graded nilpotent Lie group $G$, and  denote by $M:=\Gamma \backslash G$
the corresponding compact nil-manifold. 
Let $(\phi_\eps)_{\eps\in (0,1]}$ be a bounded family  in $L^2(M)$.
There exists a sequence $(\eps_k)_{k\in \bN}$ going to 0 as $k\to \infty$ and a positive operator-valued measure $\Gamma d\gamma \in {\cM}^+_{ov}(M\times \Gh)$
such that we have the convergence  for any $\sigma\in \cA_0$:
$$
\left(\Op^{(\eps)}(\sigma) \phi_\eps,\phi_\eps\right)_{L^2(M)}
\longrightarrow_{\eps=\eps_k, k\to \infty} \iint_{M\times \Gh} {\rm Tr}\left(\sigma(x,\pi) \Gamma(x, \pi)\right)d \gamma(x, \pi).
$$
Moreover, we may assume that the limit
$\lim_{k\to \infty} \|\phi_{\eps_k}\|_{L^2(M)}$ exists
and in this case, 
$$
\iint_{M\times \Gh} {\rm Tr}\left( \Gamma(x, \pi)\right)d \gamma(x, \pi) = \lim_{k\to \infty} \|\phi_{\eps_k}\|_{L^2(M)}^2.
$$
\end{theorem}

We call 
$\Gamma d\gamma$ the semi-classical measure of the family $(\phi_\eps)_\eps$
for the  sequence $(\eps_k)_{k\in \bN}$.
The class of smoothing symbols $\cA_0$ is defined in Section \ref{subsec_sclpdo}, and the concept of vector-valued measures is introduced in Section \ref{subsec_dualC*MGh}.
Theorem \ref{thm_scL} follows from the arguments outlined above (see Section \ref{sec_sclim}).
This notion of semi-classical measure provides a way to describe the obstruction to $L^2$-strong convergence of the family of functions $\phi_\eps$ on the `phase-space' $M\times \Gh$.
We further explore the subject by decomposing the semi-classical measures into a sum of a scalar-valued measure and an operator-valued measure, corresponding to the 1-dimensional and infinite-dimensional parts of the unitary dual, respectively, see \eqref{eq_GdgdecGh} in Section \ref{subsec_LinftyMGh}.
Additionally, we delve into the properties of \emph{quantum limits}, which, in this context, refer to semi-classical limits associated with a sequence of eigenfunctions of a sub-Laplacian or, more generally, a positive Rockland operator on $M$. 
This concept is further explored in Section \ref{sec_QLR}.
In particular, we demonstrate that the operator-valued measure possesses properties of localisation and invariance.
This work raises several intriguing questions. For instance, the analogous properties in more commutative settings enable the description of all quantum limits on e.g. tori and  spheres.
It would be fascinating to determine all quantum limits in the context of nil-manifolds of Heisenberg types and for  quotients of the Engel group. 
 
\medskip

In this paper, we consider not only 
intrinsic sub-Laplacians on Carnot  groups but more generally positive Rockland operators on graded Lie groups. 
This broader approach offers several advantages.
Firstly, by considering this larger class of hypoelliptic operators, we gain properties shared with elliptic differential operators in Euclidean and Riemannian settings. Specifically, a positive Rockland operator $\cR$ is not necessarily of degree 2, and any power $\cR^N$, where $N\in \bN$, is also a positive Rockland operator. In addition, 
the setting of  Rockland operators also lends itself to considering vector-valued  operators, such as the Rumin complex and the Rumin-Seshadri operators. 
Secondly, while the product of two stratified Lie groups may possess a stratified structure, it may be more advantageous to equip the resulting product with a graded structure (that is, with a Lie algebra equipped with a gradation but not necessarily generated by the first stratum). For instance, when studying a heat partial differential equation $\partial_t +\cL_0$ on a stratified group $G_0$, where $\cL_0$ is a sub-Laplacian on $G_0$, the corresponding differential operator on the product group $G=\bR\times G_0$ will not be homogeneous if $G$ is equipped with the stratified structure. However, it will be if $G$ is equipped with the graded structure derived from considering $\bR$ with weight 2.
Thirdly, the heat semi-groups associated with the sub-Laplacians on stratified groups are Markov, which implies, in particular, the positivity of the heat kernels. On a Lie group, Hunt's theorem states that the left-invariant semi-group generated by a left-invariant differential operator of degree greater than 2 will not be Markov. However, it turns out that most arguments involving heat kernels in our field of study do not rely on Markov theory but rather on hypoellipticity as a more fundamental premise.
Lastly, our choice of setting with Rockland operators on graded groups serves as a means to emphasise the significance of hypoellipticity. 
This is the setting of the Helffer-Nourrigat theorem, which characterises hypoelliptic left-invariant operators on nilpotent Lie groups. Recently, it has been extended to manifolds \cite{androulidakis2022pseudodifferential} that are not necessarily subRiemannian or operators that are not necessarily sub-Laplacians.
Finally, we note that many results and techniques presented in this paper may generalise to positive Rockland operators perturbed by lower-order terms, such as a potential (see, for example, \cite{withSoren}). 

\medskip

The paper is organised as follows.
After presenting the setting in detail in Sections \ref{sec_prelM} and \ref{secR}, we develop the semi-classical calculus based on representation theory in Section \ref{sec_sccM}. This provides us with the tools to determine asymptotics in Section \ref{sec_asympt}.
In Section \ref{sec_sclim}, we define the notion of semi-classical limit of a sequence of functions in $L^2(M)$ in our context.
In Section \ref{sec_QLR}, we consider the semi-classical limit of a sequence of eigenfunctions of a positive Rockland operator and demonstrate some properties of localisation and invariance.

\subsection*{Acknowledgement}
The author expresses gratitude to the Leverhulme Trust for their financial support through the Research Project Grant 2020-037. Additionally, she is thankful to the anonymous referees for their valuable comments that significantly contributed to the improvement of the paper.

\section{Preliminaries on nilpotent Lie groups and compact  nil-manifold} 
\label{sec_prelM}

In this section, we set our notation for nilpotent Lie groups and nil-manifolds.
We also recall some elements of harmonic analysis in this setting. 

\subsection{About nilpotent Lie groups}
\label{subsec_aboutGnilp}
In this paper, a nilpotent Lie group $G$ is always assumed  connected and simply connected unless otherwise stated. 
It is a smooth manifold which is identified with $\bR^n$ via the exponential mapping and a choice of coordinate system. 
This leads to a corresponding Lebesgue measure on its Lie algebra $\fg$ and the Haar measure $dx$ on the group $G$,
hence $L^p(G)\cong L^p(\bR^n)$.
This also allows us \cite[p.16]{corwingreenleaf}
to define the spaces 
$$
\cD(G)\cong \cD(\bR^n)
\quad \mbox{and}\quad  
\cS(G) \cong \cS(\bR^n)
$$
 of test functions which are smooth and compactly supported or Schwartz, 
and the corresponding spaces of distributions 
$$
\cD'(G)\cong \cD'(\bR^n)
\quad \mbox{and}\quad 
\cS'(G)\cong \cS'(\bR^n).
$$
Note that this identification with $\bR^n$ does not usually extend to the convolution: the group convolution, i.e. the operation between  two functions on $G$ defined formally via 
$$
 (f_1*f_2)(x):=\int_G f_1(y) f_2(y^{-1}x) dy,
$$
 is   not commutative in general whereas it is a commutative operation for functions on  the abelian group $\bR^n$.
 
\subsubsection{Representations of $G$ and $L^1(G)$}
In this paper,  we always assume that the representations of the group $G$ 
are strongly continuous and acting on separable Hilbert spaces.
Unless otherwise stated, the representations of $G$ will also be assumed unitary.
For a representation $\pi$ of $G$, 
we denote by $\cH_\pi$ its Hilbert space, and 
we keep the same notation for the corresponding infinitesimal representation
which acts on the universal enveloping algebra $\fU(\fg)$ of the Lie algebra of the group.
It is characterised by its action on $\fg$:
\begin{equation}
\label{eq_def_pi(X)}
\pi(X)=\partial_{t=0}\pi(e^{tX}),
\quad  X\in \fg.
\end{equation}
The infinitesimal action acts on the space $\cH_\pi^\infty$
of smooth vectors, that is, the space of vectors $v\in \cH_\pi$ such that 
 the mapping $G\ni x\mapsto \pi(x)v\in \cH_\pi$ is smooth.

We will use the following equivalent notations for the group Fourier transform of a function  $f\in L^1(G)$
  at $\pi$ 
$$
 \pi(f) \equiv \widehat f(\pi) \equiv \cF_G(f)(\pi)=\int_G f(x) \pi(x)^*dx.
$$ 

\subsubsection{The Plancherel formula}
We denote by $\Gh$ the unitary dual of $G$,
that is, the unitary irreducible representations of $G$ modulo equivalence and identify a unitary irreducible representation 
with its class in $\Gh$. The set $\Gh$ is naturally equipped with a structure of standard Borel space.
The Plancherel measure is the unique positive Borel measure $\mu$ 
on $\Gh$ such that 
for any $f\in C_c(G)$, we have:
\begin{equation}
\label{eq_plancherel_formula}
\int_G |f(x)|^2 dx = \int_{\Gh} \|\cF_G(f)(\pi)\|_{HS(\cH_\pi)}^2 d\mu(\pi).
\end{equation}
Here $\|\cdot\|_{HS(\cH_\pi)}$ denotes the Hilbert-Schmidt norm on $\cH_\pi$.
This implies that the group Fourier transform extends unitarily from 
$L^1(G)\cap L^2(G)$ to $L^2(G)$ onto the Hilbert space
$$
L^2(\Gh):=\int_{\Gh} \cH_\pi \otimes\cH_\pi^* d\mu(\pi),
$$
which we identify with the space of $\mu$-square integrable fields $\sigma$ on $\Gh$ with Hilbert norm 
$$
\|\sigma\|_{L^2(\Gh)}=\sqrt{\int_{\Gh} \|\sigma(\pi)\|_{HS(\cH_\pi)} d\mu(\pi)}.
$$
Consequently \eqref{eq_plancherel_formula} holds for any $f\in L^2(G)$ and may be restated as
$$
\|f\|_{L^2(G)} = \|\widehat f\|_{L^2(\Gh)},
$$
this formula is called the Plancherel formula.
It is possible to give an expression for the Plancherel measure $\mu$, see \cite[Section 4.3]{corwingreenleaf}, although we will not need this in this paper.
We deduce the inversion formula: for any $\kappa\in \cS(G)$, 
\begin{equation}
\label{eq_FI}	
\forall x\in G\
\int_{\Gh} \tr(\pi(x)\cF_G\kappa(\pi) )d\mu(\pi) 
=\kappa(x).
\end{equation}

\subsubsection{The von Neumann algebra and $C^*$-algebra of $G$}
\label{subsubsec_vNG}

The von Neumann algebra of the group $G$ may be realised as the von Neumann algebra $\sL(L^2(G))^G$ of $L^2(G)$-bounded operators commuting with the left-translations on $G$.
As our group is nilpotent, the $C^*$-algebra of the group is then the closure of the space of right-convolution operators with convolution kernels in the Schwartz space. 

Dixmier's full Plancherel theorem  \cite[Ch. 18]{Dixmier} states that the von Neumann algebra of $G$ can also be  realised  as
the space
$L^\infty (\Gh)$
of measurable fields of operators that are bounded, that is, 
of measurable fields of operators
 $\sigma=\{\sigma(\pi)\in \sL(\cH_\pi): \pi\in \Gh\}$ such that
 $$
 \exists C>0\qquad \|\sigma(\pi)\|_{\sL(\cH_{\pi})} \leq C 
 \ \mbox{for} \ d\mu(\pi)\mbox{-almost all} \ \pi \in \Gh.
 $$
  The smallest of such constant $C>0$ is the norm $\|\sigma\|_{L^{\infty}(\Gh)}$ of $\sigma$  
  in $L^\infty(\Gh)$. 
 Similarly,  the $C^*$-algebra of the group $C^*(G)$ is then 
the closure of $\cF_G\cS(G)$ for the $L^\infty(\Gh)$-norm, 
and 
$L^\infty (\Gh)$ is the von Neumann algebra generated by the $C^*$-algebra of the group. 

The isomorphism between 
the von Neumann algebras $L^\infty(\Gh)$ and $\sL(L^2(G))^G$ are described as follows.
Clearly, the  Fourier multiplier $f\mapsto \cF_G^{-1} \sigma \widehat f$ of a symbol $\sigma\in L^\infty(\Gh)$ is in $\sL(L^2(G))^G$.
The converse is given by \cite[Ch. 18]{Dixmier}:
 if $T\in \sL(L^2(G))^G$, then 
there exists  a unique field $\widehat T \in L^\infty (\Gh)$ such that 
$T$ and $f\mapsto \cF_G^{-1} \widehat T \widehat f$ coincide; moreover, $\|\widehat T\|_{L^\infty(\Gh)}=\|T\|_{\sL(L^2(G))}$.
By the Schwartz kernel theorem, 
the operator $T$ admits a distributional convolution kernel  $\kappa\in \cS'(G)$.
We may also write $\widehat \kappa=\widehat T$ and call this field the group Fourier transform of $\kappa$ or of $T$. 
It extends the previous definition of the group Fourier transform on $L^1(G)$ and $L^2(G)$.

\subsection{Compact nil-manifolds}
A compact nil-manifold is the quotient $M=\Gamma\backslash G$ of 
a  nilpotent Lie group $G$  by a discrete co-compact subgroup $\Gamma$ of $G$.
A concrete example of discrete co-compact subgroup is the natural discrete subgroup of the Heisenberg group, as described in 
\cite[Example 5.4.1]{corwingreenleaf}. 
Abstract characterisations are discussed in  \cite[Section 5.1]{corwingreenleaf}.

An element of $M$ is a class 
$$
\dot x := \Gamma x
$$
 of an element $x$ in $G$. If the context allows it, we may identify this class with its representative $x$. 

The quotient $M$ is naturally equipped with the structure of a compact smooth manifold. 
Furthermore, fixing a Haar measure on the unimodular group $G$, 
$M$ inherits a measure $d\dot x$ which is invariant under the translations for each $g\in G$ given by
$$
\dot x  \longmapsto  \dot x g = \Gamma xg, 
\qquad 
M  \longrightarrow  M.
$$
Recall that the Haar measure $dx$ on $G$ is unique up to a constant and, once it is fixed, $d\dot x$ is the only $G$-invariant measure on $M$ satisfying 
for any  function $f:G\to \mathbb C$, for instance continuous with compact support,
\begin{equation}
\label{eq_dxddotx}
	\int_G f(x) dx = \int_M \sum_{\gamma\in \Gamma} f(\gamma x) \ d\dot x.
\end{equation}
We denote by $\vol (M) = \int_M 1 d\dot x$  the volume of $M$.

\subsection{$\Gamma$-periodic functions on $G$ and functions on $M$}
\label{subsec_periodicfcn}
Let $\Gamma$ be a discrete co-compact subgroup of a nilpotent Lie group $G$. 

We say that a function $f:G\rightarrow \mathbb C$  is  $\Gamma$-left-periodic or just  $\Gamma$-periodic  when we have 
$$
\forall x\in G,\;\;\forall \gamma\in \Gamma ,\;\; f(\gamma x)=f(x).
$$
This definition extends readily to measurable functions and to distributions.  

There is a natural one-to-one correspondence between the functions on $G$ which are $\Gamma$-periodic and the functions on $M$.
Indeed, for any map $F$ on $M$, 
the corresponding periodic function on $G$ is $F_G$ defined via
$$	 
F_G(x) := F(\dot x), \quad x\in G,
$$
while if $f$ is a $\Gamma$-periodic function on $G$, 
it defines a function $f_M$ on $M$ via
$$
f_M(\dot x) =f(x), \qquad x\in G.
$$
Naturally, $(F_G)_M=F$ and $(f_M)_G=f$.

We also  define, at least formally, the periodisation $\phi^\Gamma$ of a function $\phi(x)$ of the variable $x\in G$ by:
$$
\phi^\Gamma(x) = \sum_{\gamma \in \Gamma } \phi(\gamma x), \qquad x\in G.
$$

If $E$ is a space of functions or of distributions on $G$, then we denote by $E^\Gamma$ the space of elements in $E$ which are  $\Gamma$-periodic. 
Although 
$\cD(G)^\Gamma = \{0\} = \cS(G)^\Gamma,$
many other periodised functions or functional spaces have interesting descriptions on $M$ (see  \cite{nil-manifold} for comments and proofs):
\begin{proposition}
\label{prop_nil-manifold}
\begin{enumerate}
\item
 The periodisation of a Schwartz  function $\phi\in \cS(G)$ is a well-defined function  $\phi^\Gamma$ in $C^\infty(G)^\Gamma$.
Furthermore, the map $\phi \mapsto \phi^\Gamma$ yields a surjective morphism of topological vector spaces from
$\cS(G)$ onto $C^\infty(G)^\Gamma$
and from
$\cD(G)$ onto $C^\infty(G)^\Gamma$.
\item  For every $F\in \cD'(M)$, the tempered distribution  $F_G\in 
\cS'(G)$ is defined by
$$
\forall \phi\in \cS(G)\qquad 
\langle F_G,\phi\rangle  = \langle F , (\phi^\Gamma)_M\rangle.
$$
The map $F\mapsto F_G$ yields an isomorphism of topological vector spaces
from $\cD'(M)$ onto $\cS'(G)^\Gamma=\cD'(G)^\Gamma$.
\item 
For every $p\in [1,\infty]$,
the map $F\mapsto F_G$ is a topological vector space isomorphism of   (in fact, an isomorphism between Banach spaces) from $L^p(M)$ onto $L^p_{loc}(G)^\Gamma$ with inverse $f\mapsto f_M$.
\item 
Let $f\in \cS'(G)^\Gamma$ and $\kappa\in \cS(G)$.
Then $(\dot x, \dot y)  \mapsto \sum_{\gamma\in \Gamma} \kappa (y^{-1} \gamma x)$
is a smooth function on $M\times M$ and  $f*\kappa \in C^\infty(G)^\Gamma$.
Viewed as a function on $M$, 
$$
(f*\kappa)_M(\dot x) 
= \int_M f_M(\dot y) \sum_{\gamma\in \Gamma} \kappa (y^{-1} \gamma x) \  d\dot y.
$$
\item If $F\in L^p(M)$ with $p\in [1,+\infty]$ then 
$$
\|(F_G*\kappa)_M\|_{L^p(M)}
\leq \|\kappa\|_{L^1(G)} \|F\|_{L^p(M)}.
$$
\end{enumerate}
 \end{proposition}

A more precise norm for the $L^2$-boundedness follows 
from general considerations in harmonic analysis on homogeneous domains (see e.g. \cite[Lemma 2.5]{withSoren}):
\begin{lemma}
\label{lem_RegRep}
For any $\kappa\in L^1(G)$ and any $F_G\in L^2(M)$, 
$$
\|(F_G*\kappa)_M\|_{L^2(M)}
\leq \sup_{\pi\in \widehat \pi} \|\pi(\kappa)\|_{\sL(\cH_\pi)} \|F\|_{L^2(M)}.
$$
\end{lemma}

\subsection{Operators on $M$ and $G$}
\label{subsec_OpGM}
A mapping $T:\cS'(G)\to \cS'(G)$
or $\cD'(G) \to \cD'(G)$ is invariant under an element $g\in G$ when  
$$
\forall f\in \cS'(G) \ (\mbox{resp.} \, \cD'(G)), \qquad T(f(g \, \cdot)) = (Tf)(g \, \cdot).
$$ 
It is invariant under a subset of $G$ if it is invariant under every element of the subset.

Consider a linear continuous mapping $T:\cS'(G)\to \cS'(G)$
or $\cD'(G) \to \cD'(G)$ respectively
which is invariant under $\Gamma$. Then it  naturally induces
a linear continuous mapping
 $T_M$ on $M$ given via
$$
T_M F = (TF_G)_M, \qquad F\in \cD'(M).
$$
Consequently, 
if  $T$ coincides with   a smooth differential operator 	on $G$ that is invariant under $\Gamma$, then $T_M$ is a smooth differential operator on $M$.
For convolution operators $T$, we obtain
\begin{lemma}
\label{lem_IntKernel}
Let $\kappa\in \cS(G)$ be a given convolution kernel, and let us denote by $T$ the associated convolution operator:
$$
T (\phi) = \phi*\kappa, \qquad \phi\in \cS'(G).
$$
The operator $T$ is a linear continuous mapping $\cS'(G)\to \cS'(G)$. 
The corresponding operator $T_M$ maps  $\cD'(M)$ to $\cD'(M)$ continuously and linearly. Its  integral kernel is
$$
K\in C^\infty(M\times M), \qquad
K(\dot x,\dot y) = 
\sum_{\gamma\in \Gamma}  \kappa(y^{-1}\gamma  x).
$$
Consequently, the operator $T_M$ is Hilbert-Schmidt on $L^2(M)$ with Hilbert-Schmidt norm 
$$
\|T_M\|_{HS} = \|K\|_{L^2(M\times M)}.
$$
\end{lemma}
\begin{proof} This follows from 
the results in  Proposition \ref{prop_nil-manifold}. 
\end{proof}

\section{Preliminaries on Rockland operators  on $G$ and $M$}
\label{secR}

In this section, we recall the definition and known properties of Rockland operators. 
Our convention here is that they are homogeneous left-invariant operators on a nilpotent Lie group $G$ whose group Fourier transform is injective (see  Section \ref{subsubsec_posRdef}). This implies that the group $G$ is graded. We therefore start this section with describing the setting of graded groups. 

\subsection{Graded nilpotent Lie group}
\label{subsec_gradedG}

In the rest of the paper, 
we will be concerned with graded Lie groups.
References on this subject includes \cite{folland+stein_82} and 
\cite{R+F_monograph}.

\subsubsection{Definition}
\label{subsubsecDefgradedG}
A graded Lie group $G$  is a connected and simply connected 
Lie group 
whose Lie algebra $\fg$ 
admits an $\bN$-grading
$\fg= \oplus_{\ell=1}^\infty \fg_{\ell}$
where the $\fg_{\ell}$, $\ell=1,2,\ldots$, 
are vector subspaces of $\fg$,
almost all equal to $\{0\}$,
and satisfying 
$[\fg_{\ell},\fg_{\ell'}]\subset\fg_{\ell+\ell'}$
for any $\ell,\ell'\in \bN$.

This implies that the group $G$ is nilpotent.
Examples of such groups are the Heisenberg group
 and, more generally,
all stratified groups (which by definition correspond to the case $\fg_1$ generating the full Lie algebra $\fg$).
A Carnot group is by definition a stratified group
together with a scalar product on $\fg_1$.

\subsubsection{Dilations and homogeneity}
\label{subsubsec_dilations}
For any $r>0$, 
we define the  linear mapping $D_r:\fg\to \fg$ by
$D_r X=r^\ell X$ for every $X\in \fg_\ell$, $\ell\in \bN$.
We may also write $D_r X = r\cdot X$.
Then  the Lie algebra $\fg$ is endowed 
with the family of dilations  $\{D_r, r>0\}$
and becomes a homogeneous Lie algebra in the sense of 
\cite{folland+stein_82}.
The weights of the dilations are the integers $\ell\in \bN$ such that $\fg_\ell\not=\{0\}$, or in other words, the eigenvalues of $ (\ln r)^{-1}\ln D_r$ (counted without multiplicity). 
We denote by  
$$
\upsilon_1\leq \ldots \leq \upsilon_n
$$
the eigenvalues counted with multiplicities. The multiplicities correspond to the dimensions of $\fg_\ell\neq\{0\}$.  

We construct a basis $X_1,\ldots, X_n$  of $\fg$ adapted to the gradation,
by choosing a basis $\{X_1,\ldots X_{n_1}\}$ of $\fg_1$ (this basis is possibly reduced to $\emptyset$ if $\fg_1$ is trivial), then 
$\{X_{n_1+1},\ldots,  X_{n_1+n_2}\}$ a basis of $\fg_2$
(possibly $\emptyset$ as well as the others) etc.

 The associated group dilations are defined by
$$
D_r x = r\cdot x
:=(r^{\upsilon_1} x_{1},r^{\upsilon_2}x_{2},\ldots,r^{\upsilon_n}x_{n}),
\quad x=(x_{1},\ldots,x_n)\in G, \ r>0.
$$
In a canonical way,  this leads to the notions of homogeneity for functions, distributions and operators and we now give a few important examples. 

The Haar measure is $Q$-homogeneous where 
$$
Q:=\sum_{\ell\in \bN}\ell \dim \fg_\ell=\upsilon_1+\ldots+\upsilon_n,
$$
 is called the homogeneous dimension of $G$.

Identifying the element of $\fg$ with left invariant vector fields, 
each  $X_j$ is a $\upsilon_j$-homogeneous differential operator. More generally, the differential operator 
$$
X^{\alpha}=X_1^{\alpha_1}X_2^{\alpha_2}\cdots
X_{n}^{\alpha_n}, \quad \alpha\in \bN_0^n$$
is homogeneous with degree
$$
[\alpha]:=\alpha_1 \upsilon_1 + \cdots + \alpha_n \upsilon_n.
$$

The unitary dual $\Gh$ inherits  a dilation from the one on $G$ \cite[Section 2.2]{FFPisa}: we denote by $r\cdot \pi$ the element of $\Gh$ obtained from $\pi$ through dilations by $r$, that is, $r \cdot \pi(x) = \pi (r\cdot x)$, 
$r>0$ and $x\in G$.

\subsubsection{Homogeneous quasi-norms}
 
An important class of homogeneous maps are the homogeneous quasi-norms, 
that is, a $1$-homogeneous non-negative continuous map $G \ni x\mapsto \|x\|$ which is symmetric and definite in the sense that $\|x^{-1}\|=\|x\|$ and $\|x\|=0\Longleftrightarrow x=0$.
In fact, all the homogeneous quasi-norms are equivalent in the sense that if $\|\cdot\|_1$ and $\|\cdot\|_2$ are two of them, then 
$$
\exists C>0 \qquad \forall x\in G
\qquad C^{-1} \|x\|_1 \leq \|x\|_2 \leq C \|x\|_1.
$$
Examples may be constructed easily, such as  
$$
\|x\| = (\sum_{j=1}^n |x_j|^{N/\upsilon_j})^{-N} \ \mbox{for any}\ N\in \bN,
$$
 with the convention above and the weights $\upsilon_j$ introduced in Section \ref{subsubsec_dilations}. 
 In the stratified case, it is also possible to construct a homogeneous quasi-norm which is also a norm. 
 
\subsubsection{Approximation of the identity on $G$ and $M$}

Using the dilations, it is easy to adapt the construction of approximations of the identity from the Euclidean setting to the context of  graded groups \cite[Section 3.1.10]{R+F_monograph}:
\begin{proposition}
\label{prop_app_id}
	Let $\kappa\in \cS(G)$ with $\int_G \kappa(y)dy=1$. 
	Set $\kappa_t (y) = t^{-Q}\kappa(t^{-1}\cdot y)$. 
For any $p\in [1,\infty)$, 
we have for any $f\in L^p(G)$:
$$
\lim_{t\to 0} \|f* \kappa^{(t)} - f\|_{L^p(G)}
=0=
\lim_{t\to 0} \| \kappa^{(t)} *f - f\|_{L^p(G)},
$$
\end{proposition}
%
%
%
\subsection{Positive Rockland operators on $G$}
\label{subsec_cR}
Let us briefly review the definition and main properties of positive Rockland operators. 
References on this subject includes \cite{folland+stein_82} and 
\cite{R+F_monograph}.

\subsubsection{Definitions}
\label{subsubsec_posRdef}

A \emph{Rockland operator}
 $\cR_G$ on $G$ is 
a left-invariant differential operator 
which is homogeneous of positive degree and satisfies the Rockland condition, that is, 
for each unitary irreducible representation $\pi$ on $G$,
except for the trivial representation, 
the operator $\pi(\cR_G)$ is injective on the space  $\cH_\pi^\infty$ of smooth vectors of the infinitesimal representation.

\begin{remark}
Some authors may not assume the homogeneity of the Rockland operator, but as a convention here, we assume that Rockland operators are homogeneous. 
This convention does not change the analysis since authors who do not assume homogeneity study the principal part of the operators, that is, what we call Rockland operator. 
\end{remark}

Recall
that Rockland operators $\cR_G$ are hypoelliptic.
In fact, they are equivalently characterised as the left-invariant homogeneous differential operators that are hypoelliptic. 
If this is the case, then $\cR_G + \sum_{[\alpha]< \nu}c_\alpha X^\alpha$, where $c_\alpha\in \bC$ and $\nu$ is the homogeneous degree of $\cR$, is hypoelliptic.

A Rockland operator is \emph{positive} when 
$$
\forall f \in \cS(G),\qquad
\int_G \cR_G f(x) \ \overline{f(x)} dx\geq 0.
$$

Any sub-Laplacian with the sign convention $-(X_1^2+\ldots+X_{n_1}^2)$ of a stratified Lie group  is a positive Rockland operator; here $X_1,\ldots, X_{n_1}$ form a basis of the first stratum $\fg_1$.
The reader familiar with the Carnot group setting may 
 view positive Rockland operators as generalisations of the natural sub-Laplacians.
Positive Rockland operators are easily constructed on any graded Lie group, see \cite[Corollary 4.1.10]{R+F_monograph}.

A positive Rockland operator is essentially self-adjoint on $L^2(G)$ and we keep the same notation for their self-adjoint extension.
Its spectrum is $\sp(\cR_G)$ included in $[0,+\infty)$ and the point 0 may be neglected in its spectrum \cite[Lemma 2.12]{FFPisa}.

For each unitary irreducible representation $\pi$ of $ G$, 
the operator 
$\pi(\cR_G)$ is  essentially self-adjoint on $\cH^\infty _\pi$,
and we keep the same notation for this self-adjoint extension.
Its spectrum $\sp(\pi(\cR_G))$ is a discrete subset of $(0,\infty)$ if $\pi\not =1_{\Gh}$ is not trivial, while $\pi(\cR_G)=0$ if $\pi=1_{\Gh}$ is the trivial representation, see e.g. \cite{nil-manifold}.

Let us denote by $E$ and $E_\pi$ the spectral measures 
$$
\mbox{of} \ \cR_G = \int_\bR \lambda dE_\lambda,
\quad\mbox{and of}\ 
\pi(\cR_G) = \int_\bR \lambda dE_\pi(\lambda),  \quad \pi\in \Gh.
$$
Then $\widehat E(\pi)=E_\pi$ in the sense that 
for any interval $I\subset \bR$, 
the group Fourier transform 
$\widehat {E(I)}$ of the projection $E(I)\in \sL(L^2(G))^G$ coincides with the field of operator
$$
\widehat E(I):=\{\pi(E(I)), \pi\in \Gh\}
$$
which  is in $L^\infty(\Gh)$. 
	It satisfies:
	$$
	\widehat E(I)^2=\widehat E(I)
	\quad\mbox{and when not trivial}\quad
	\|\widehat E(I)\|_{L^\infty(\Gh)} =1.
	$$
	We will use the notation for any $\lambda\in \bR$
	\begin{equation}
		\label{eq_Elambda}
		E_\lambda = E(\{\lambda\})
	\qquad\mbox{and}\qquad  
	\widehat E(\{\lambda\}) = \widehat E_\lambda  .
	\end{equation}

\subsubsection{Spectral multipliers in $\cR_G$ and  in $\widehat {\cR_G}$}
\label{subsubsec_psi(R)}

If $\psi:\bR^+\to \bC$ is a measurable function,
the spectral multiplier $\psi(\cR_G) = \int_\bR \psi(\lambda) dE_\lambda$ is well defined as a possibly unbounded operator on $L^2(G)$.
If the domain of $\psi(\cR_G)$  contains $\cS(G)$ 
and defines a continuous map $\cS(G)\to \cS'(G)$, then 
it is invariant under right-translation and, by the Schwartz kernel theorem, admits a right-convolution kernel $\psi(\cR_G)\delta_0 \in \cS'(G)$ which satisfies the following homogeneity property:
\begin{equation}
\label{eq_homogeneitypsiR}	
 \psi(r^\nu \cR_G) \delta_0 (x) =r^{-Q} \psi(\cR_G)\delta_0(r^{-1}\cdot x),
 \quad x\in G.
\end{equation}
Furthermore, for each unitary irreducible representation $\pi$ of $G$,  
 the domain of the operator
$ \psi(\pi(\cR_G))= \int_\bR \psi(\lambda) dE_\pi (\lambda) $
contains $\cH_\pi^\infty$ and we have
$$
\widehat  {\psi(\cR_G)}(\pi)=
 \psi (\pi(\cR_G)) .
$$

\smallskip

The following statement is the famous result due to Hulanicki \cite{hulanicki}:
\begin{theorem}[Hulanicki's theorem]
\label{thm_hula}
	Let $\cR_G$ be a positive Rockland operator on $G$.
	If $\psi\in \cS(\bR)$ then $\psi(\cR_G)\delta_0 \in \cS(G)$.
\end{theorem}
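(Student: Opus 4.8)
The plan is to prove Hulanicki's theorem in the following form: for $\psi \in \cS(\bR)$, the right-convolution kernel $\psi(\cR)\delta_0$ is a Schwartz function on $G$. Since this is a classical result whose proof is essentially reproduced from \cite{hulanicki} or \cite[Section 4.5]{R+F_monograph}, I would organize the argument around controlling the Schwartz seminorms of the kernel via the spectral calculus. The key idea is that differentiation and multiplication by polynomials in the kernel translate, under the group Fourier transform or under the action of $\cR$, into operations that are still governed by Schwartz functions of $\cR$.

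First I would record the elementary reductions. Writing $\kappa_\psi := \psi(\cR)\delta_0$, one has $X^\alpha \kappa_\psi = (P_\alpha(\cR)\text{-type corrections})$; more precisely, using that $\cR$ is a differential operator of homogeneous degree $\nu$ and the homogeneity \eqref{eq_homogeneitypsiR}, a left-invariant derivative $X^\alpha$ applied to $\kappa_\psi$ produces a kernel of an operator of the form $\tilde\psi(\cR)$ composed with a fixed left-invariant differential operator, where $\tilde\psi$ is again rapidly decreasing. To control decay at infinity (multiplication by homogeneous polynomials or by powers of a homogeneous quasi-norm $\|x\|$), the standard trick is to use the identity expressing $\|x\|^{2k}$ acting on the kernel of $\psi(\cR)$ in terms of the kernel of a \emph{finite} sum of operators of the form $(\text{poly in }X)\,\psi_j(\cR)\,(\text{poly in }X)$ with $\psi_j \in \cS(\bR)$, exploiting that $[\cR, \text{multiplication by coordinate}]$ is again a differential operator of lower homogeneous degree. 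Iterating, every Schwartz seminorm $\sup_x (1+\|x\|)^{M} |X^\alpha \kappa_\psi(x)|$ is dominated by a finite combination of sup-norms of kernels of operators $\phi(\cR)$ with $\phi \in \cS(\bR)$.

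So the heart of the matter reduces to: \emph{if $\phi \in \cS(\bR)$, then $\phi(\cR)\delta_0 \in L^\infty(G)$ (indeed is continuous and bounded).} This is where I expect the main obstacle to lie, and it is the genuine content of Hulanicki's theorem. The argument: pick $N$ large; write $\phi(\lambda) = (1+\lambda)^{-N} \cdot \eta(\lambda)$ with $\eta \in \cS(\bR)$ bounded, so $\phi(\cR) = (I+\cR)^{-N}\eta(\cR)$; since $\eta(\cR)$ is bounded on $L^2(G)$, it suffices to show $(I+\cR)^{-N}$ maps $L^2(G)$ into $C_b(G)$ for $N$ large. This last fact follows from Sobolev embedding on $G$: the operator $(I+\cR)^{-N}$ has a convolution kernel that is an $L^1$ (indeed $L^2$) function for $N$ large, obtained by subordination, i.e. $(I+\cR)^{-N} = c_N \int_0^\infty t^{N-1} e^{-t} e^{-t\cR}\, dt$, and the heat kernel $e^{-t\cR}\delta_0$ is known to be Schwartz (this is itself a key input, established via hypoellipticity and homogeneity of $\cR$); integrating the Schwartz bounds for the heat kernel against $t^{N-1}e^{-t}\,dt$ gives integrability of the kernel of $(I+\cR)^{-N}$. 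Then Young's inequality gives $\|(I+\cR)^{-N} g\|_{L^\infty} \le \|\text{kernel}\|_{L^2} \|g\|_{L^2}$, completing the estimate.

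The main obstacle, then, is organizing the commutator bookkeeping (Step two) cleanly enough that one sees only finitely many Schwartz functions of $\cR$ appear at each stage, together with the input that the heat semigroup $e^{-t\cR}$ has a Schwartz kernel with the correct $t$-dependent bounds — this last point is where homogeneity (rescaling $t \mapsto 1$) and the hypoellipticity of $\cR$ (to get smoothness of the heat kernel, via \cite{HelfferNourrigat79}) are used in an essential way. Once these ingredients are in place the proof is a routine, if lengthy, verification, and I would simply cite \cite{hulanicki} and \cite[Section 4.5]{R+F_monograph} for the details rather than reproduce them.
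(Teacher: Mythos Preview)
The paper does not actually prove this theorem: it is stated as ``the famous result due to Hulanicki \cite{hulanicki}'' and no proof is given, beyond the parenthetical remark that the Schwartz property of the heat kernels $p_t = e^{-t\cR}\delta_0$ is itself used in the proof. Your proposal, which sketches the standard argument (reduction to $L^\infty$-bounds via commutator bookkeeping, subordination to the heat semigroup, and Sobolev-type embedding) and then defers to \cite{hulanicki} and \cite[Section 4.5]{R+F_monograph} for details, is therefore entirely consistent with the paper's treatment and in fact more informative than what the paper provides.
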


For instance,  the heat kernels 
$$
p_t:=e^{-t\cR_G}\delta_0, \quad t>0,
$$
 are Schwartz - although this property is in fact used in the proof of Hulanicki's Theorem. 

\medskip

The following result describes the map 
  $\psi\mapsto \psi(\cR_G)\delta_0$.
This was mainly obtained by Christ for sub-Laplacians on stratified groups \cite[Proposition 3]{Christ91} and readily extended   to positive Rockland operators in \cite{nil-manifold}, see also \cite{Martini}   and the references to Hulanicki's works therein.

\begin{theorem}
\label{thm_christ}[Christ, Hulancki]
	Let $\cR_G$ be a positive Rockland operator of homogeneous degree $\nu$ on $G$. 
	If the measurable function  $\psi:\bR^+\to \bC$ is in  $L^2(\bR^+,  \lambda^{Q/\nu} d\lambda/\lambda)$, 
	then $\psi(\cR_G)$  defines a continuous map $\cS(G)\to \cS'(G)$ whose convolution kernel  $\psi(\cR_G)\delta_0$ is in $L^2(G)$. Moreover,   we have 
$$
\|\psi(\cR_G)\delta_0\|_{L^2(G)}^2
 	=c_0\int_0^\infty |\psi (\lambda)|^2  \lambda^{\frac Q \nu} \frac{d\lambda}{\lambda},
 $$
 where $c_0 = c_0(\cR_G)$ is a positive constant of $\cR_G$ and $G$.  For any $\psi\in \cS(\bR)$, we also have
$$
\psi(\cR_G)\delta_0(0)
 	=c_0\int_0^\infty \psi (\lambda)  \lambda^{\frac Q \nu} \frac{d\lambda}{\lambda}.
 $$

\end{theorem}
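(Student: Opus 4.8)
The plan is to use the dilation structure on $\Gh$ together with the Plancherel formula to reduce the statement to a single spectral computation. First I would recall that the operator $\psi(\cR)$ acts on the Fourier side as the field $\pi\mapsto \psi(\pi(\cR))$, and that $\pi(\cR)$ has discrete spectrum in $(0,\infty)$ away from the trivial representation. Since $\cR$ is $\nu$-homogeneous, the dilation relation $(r\cdot\pi)(\cR) = r^\nu\,\pi(\cR)$ holds, so the spectral measures satisfy $E_{r\cdot\pi} = E_\pi$ pushed forward by $\lambda\mapsto r^\nu\lambda$; this is the structural input that makes the exponent $Q/\nu$ appear. The first concrete step is to show that when $\psi\in L^2(\bR^+,\lambda^{Q/\nu}d\lambda/\lambda)$ the candidate kernel $\psi(\cR)\delta_0$ lies in $L^2(G)$ with the asserted norm; I would do this by first assuming $\psi$ is nice (say Schwartz, or compactly supported away from $0$ and $\infty$), where Hulanicki's theorem (Theorem \ref{thm_hula}) and the standard functional calculus guarantee $\psi(\cR)\delta_0\in\cS(G)$, so that the Plancherel formula
$$
\|\psi(\cR)\delta_0\|_{L^2(G)}^2 = \int_{\Gh}\|\psi(\pi(\cR))\|_{HS(\cH_\pi)}^2\,d\mu(\pi)
$$
is literally valid.

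Next I would compute the right-hand side. Writing the spectral decomposition $\pi(\cR)=\int_0^\infty \lambda\,dE_\pi(\lambda)$ with eigenprojections of finite rank, one has $\|\psi(\pi(\cR))\|_{HS}^2 = \sum_{j}|\psi(\lambda_j(\pi))|^2 m_j(\pi)$ where $\lambda_j(\pi)$ are the eigenvalues and $m_j(\pi)$ their multiplicities. Using the dilation invariance, I would disintegrate the Plancherel measure along the dilation orbits: parametrize $\Gh\setminus\{1\}$ (up to a set of $\mu$-measure zero) as a dilation of a cross-section, so that the integral against $\mu$ becomes an integral over the cross-section times an integral over $r>0$ with the homogeneous density $r^{Q-1}dr$ coming from $Q$-homogeneity of the Haar measure hence of $\mu$. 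After the change of variables $\lambda = r^\nu \lambda_j(\pi_0)$ the $\pi$-dependence decouples: the inner sum produces a constant $c_0=c_0(\cR)$ — encoding the cross-section geometry and the eigenvalue data of $\pi(\cR)$ at one representative scale — and the outer integral becomes exactly $\int_0^\infty|\psi(\lambda)|^2\lambda^{Q/\nu}\,d\lambda/\lambda$. This establishes the identity on the dense class of nice $\psi$, and in particular shows $c_0>0$ and finite (it is finite because, e.g., for $\psi = \mathbf 1_{[1,2]}$ the left side is finite by Hulanicki).

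Finally I would remove the regularity assumption on $\psi$ by density: the map $\psi\mapsto \psi(\cR)\delta_0$ is, by the identity just proved, an isometry from (a dense subspace of) $L^2(\bR^+,\lambda^{Q/\nu}d\lambda/\lambda)$ into $L^2(G)$, so it extends to an isometry on the whole space, and for general $\psi$ in that weighted $L^2$ space the operator $\psi(\cR)$ is still defined on $\cS(G)$ with values in $\cS'(G)$ (its kernel being the $L^2$-limit of the kernels of truncations $\psi\mathbf 1_{[1/k,k]}$, which converge because they form a Cauchy sequence under the isometry). The continuity $\cS(G)\to\cS'(G)$ then follows since convolution by an $L^2$ kernel maps $\cS(G)$ to $L^2(G)\subset\cS'(G)$ continuously. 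The main obstacle I expect is the bookkeeping in the disintegration step: making rigorous the claim that the Plancherel measure factors through the dilation action with the correct homogeneous weight, and verifying that the resulting cross-section integral of eigenvalue multiplicities converges to a finite positive constant rather than diverging — this is exactly where Christ's original argument for sub-Laplacians does the real work, and the remark "the proof extends to positive Rockland operators" is carrying that load; I would either reproduce the relevant estimates on the growth of $N(\pi(\cR);\Lambda)$ (the counting function of $\pi(\cR)$) uniformly over the cross-section, or invoke them from \cite{Christ91} with the observation that only homogeneity and the Rockland (hypoellipticity) property of $\cR$, not the sub-Laplacian structure, are used.
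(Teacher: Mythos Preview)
Your approach is essentially correct but takes a genuinely different route from the paper. You go through the Plancherel side and disintegrate $\mu$ along dilation orbits in $\Gh$, extracting $c_0$ as a cross-section integral of eigenvalue data. The paper instead works directly on $\bR^+$: it first shows (using that the heat kernel is Schwartz, as in Christ's argument) that $\phi_0 := 1_{(0,b]}(\cR)\delta_0 \in L^2(G)$ for each $b>0$, and hence that there is a unique sigma-finite Borel measure $m$ on $\bR^+$ with $\|\psi(\cR)\delta_0\|_{L^2}^2 = \int_0^\infty |\psi|^2\,dm$ for every bounded compactly supported $\psi$; then the dilation homogeneity \eqref{eq_homogeneitypsiR} forces $m$ to be $Q/\nu$-homogeneous on the multiplicative group $\bR^+$, so $m = c_0\,\lambda^{Q/\nu}\,d\lambda/\lambda$ for some constant $c_0$. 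This sidesteps entirely the two technical points you flag as delicate --- the existence of a measurable cross-section for the dilation action on $\Gh$, and the convergence of the eigenvalue sum $\sum_j m_j(\pi_0)\lambda_j(\pi_0)^{-Q/\nu}$ over that cross-section --- by never unpacking the Plancherel integral. Your approach, by contrast, would give an explicit representation-theoretic formula for $c_0$, at the cost of that extra bookkeeping. One small correction: Hulanicki's theorem (Theorem \ref{thm_hula}) requires $\psi\in\cS(\bR)$, so to verify $0<c_0<\infty$ you should test with a smooth bump $\psi\in\cD((0,\infty))$ (or with $\psi(\lambda)=e^{-\lambda}$, as the paper does in Remark \ref{rem_thm_christ}) rather than with $\mathbf 1_{[1,2]}$.
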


\begin{remark}
\label{rem_thm_christ}
By plugging  the function $\psi(\lambda) =e^{-\lambda}$ in the second formula of Theorem \ref{thm_christ}, 
we obtain the following expression for the constant in the statement in terms of the heat kernel $p_t$ of $\cR_G$:
\begin{equation}
\label{eq_c0}
	c_0 = c_0(\cR_G) = \frac{p_1(0)}{\Gamma(Q/\nu)},
\quad\mbox{where}\quad 
\Gamma(Q/\nu) =\int_0^\infty e^{-\lambda}  \lambda^{Q/\nu} \frac{d\lambda}{\lambda}.
\end{equation} 
See  \cite[Section 3.3.3]{nil-manifold} for  further comments. 
\end{remark}

\subsection{Positive Rockland operators on $M$}
\label{subsec_propRM}

This section is devoted to the general properties of positive Rockland operators on nil-manifolds. 
They are well known \cite{nil-manifold,withSoren} 
in the context of nil-manifolds, and have been obtained  more generally on filtered manifolds  
\cite{Dave+Haller2020,Dave+Haller2022}, see also 
 \cite{CdvHT,CdvHT2}  on sub-Riemannian manifolds for sub-Laplacians.

\begin{proposition}
\label{prop_RM}
Let $\cR$ be a positive Rockland operator on $G$.
The operator $\cR_M$  it induces on $M$ is a smooth differential operator which is positive and essentially self-adjoint on $C^\infty(M)\subset L^2(M)$.

 \begin{enumerate}
 \item
  Let $\psi\in \cS(\bR)$.
  \begin{itemize}
\item[(i)] 	The operator $\psi(\cR_M)$ defined as a bounded spectral multiplier on $L^2(M)$ coincides with the operator 
$$
\phi \longmapsto (\psi (\cR) \phi_G	)_M= (\phi_G * \kappa
)_M
$$
where  $\kappa:=\psi( \cR)\delta_0\in \cS(G)$.
 The integral kernel of $\psi(\cR_M)$ is a smooth function on $M\times M$ given by 
  $$
K(\dot x,\dot y) = 
\sum_{\gamma\in \Gamma}  \kappa(y^{-1}\gamma  x).
$$
\item[(ii)]  
  For every $\eps\in (0,1]$,  the integral kernel $K_\eps$ of $
\psi(\eps^\nu \cR_M)$ satisfies 
$$
K_\eps (\dot x,\dot x)= \eps^{-Q} \kappa(0) +O(\eps)^\infty, 
$$
meaning that $K_\eps (\dot x,\dot x)= \eps^{-Q} \kappa(0) +O(\eps^N),$ for every $N\in \bN$.
Here $\nu$ is the degree of homogeneity of $\cR$, and $\kappa(0)$ is the value at $x=0$ of the convolution kernel $\kappa= \psi(\cR)\delta_0$ also given by Theorem \ref{thm_christ} as
 $$
\kappa(0)=c_0\int_0^\infty \psi (\lambda)  \lambda^{\frac Q \nu} \frac{d\lambda}{\lambda}.
$$
The trace and Hilbert-Schmidt norm have the following asymptotics in $\eps\to0$:
\begin{align*}
	\tr \left(\psi(\eps^\nu \cR_M)\right) 
& = \eps^{-Q} 
\vol (M)\kappa(0) + O(\eps^\infty)\\
\|\psi(\eps^\nu \cR_M)\|^2_{HS} 
&= \eps^{-Q} 
\vol (M)\, c_0\int_0^\infty |\psi (\lambda)|^2  \lambda^{\frac Q \nu} \frac{d\lambda}{\lambda} + O(\eps^\infty).
\end{align*}
\end{itemize}

\item The spectrum $\sp(\cR_M)$ of 	$\cR_M$ is a discrete and unbounded subset of $[0,+\infty)$.
Each eigenspace of $\cR_M$ has finite dimension.
The resolvent operators $(\cR_M -z)^{-1}$, $z\in \bC \setminus {\rm Sp}(\cR_M)$,  are  compact on $L^2(M)$.
 The constant functions on $M$ form the 0-eigenspace of $\cR_M$.
 The eigenfunctions of $\cR_M$ are smooth on $M$.
  \end{enumerate}
\end{proposition}

\begin{proof}
For Part (1)(i) and Part (2), see \cite[Section 3.4]{nil-manifold}.
See 
\cite[Section 4.1]{nil-manifold} for Part (1)(i).
\end{proof}

The asymptotics for the heat kernel of $\cR_M$ given above together with 
the homogeneous nature of $M=\Gamma \backslash G$ and
Karamata's tauberian theorem imply the following Weyl laws:

\begin{corollary}
\label{cor_prop_RM}
	We continue with the setting above. 
Consider an orthonormal basis $(\varphi_j)_{j\in \bN_0}$  of the Hilbert space $L^2(M)$ consisting\ of eigenfunctions of $\cR_M$:
$$
\cR_M \varphi_j = \mu_j \varphi_j,
\quad\mbox{with}\quad 
\mu_0 < \mu_1 \leq \mu_2 \leq  \ldots \leq \mu_j   \longrightarrow  +\infty
\quad\mbox{as}\quad j\to +\infty.
$$
We denote by $\nu$ the homogeneous degree of $\cR$ and 
its spectral counting function by
$$
N(\Lambda):=\left|\left\{ j\in\bN_0,\;\;\mu_j\leq \Lambda\right\}\right| .
$$
The Weyl law for $\cR_M$ is given by 
$$
N(\Lambda)\sim c \Lambda^{Q/\nu },
\qquad c:= \vol (M) \frac{ \nu}{Q} c_0 ,
$$
Moreover, 
for any continuous function $f:M\to \bC$,
 we have the following mean convergence:
$$
\lim_{\Lambda\to \infty} \frac 1{N(\Lambda)} 
\sum_{j: \mu_j \leq \Lambda }  \int_M f(\dot x) |\varphi_j(\dot x)|^2 d\dot x =  
\int_M f(\dot x) \frac{d\dot x}{\vol(M)}.
$$
\end{corollary}

\begin{proof}[Proof of Corollary \ref{cor_prop_RM}]
By density, 
it suffices to show the statement for $f\in \cD(M)$ smooth.
The Laplace transform of 
the measure $\mu$ given by 
	$$
	\mu[0,\Lambda] = \sum_{j: \mu_j \leq \Lambda } \int_M f(\dot x) |\varphi_j(\dot x)|^2 d\dot x
	= \tr (f(\dot x)  1_{[0,\Lambda]}(\cR_M)), \qquad \Lambda\geq 0,
		$$
 satisfies by the kernel estimate in Proposition \ref{prop_RM} (1) (ii):
	$$
	\int_0^\infty e^{-t\lambda} d\mu(\lambda) =
	\tr (f(\dot x)  e^{-t \cR}) = t^{-\frac Q\nu }  p_1(0) \int_M f(\dot x)d\dot x +O(t)^\infty.
	$$
By Karamata's tauberian theorem
	\cite[Theorem 10.3]{Simon}, we obtain:
	$$
	\lim_{\Lambda\to +\infty}\Lambda^{- Q/\nu}  \mu[0,\Lambda] 
	=\frac{p_1(0) \int_M f(\dot x)d\dot x}{\Gamma(1+\frac Q\nu)} = \frac{ \nu}{Q} c_0  \int_M f(\dot x)d\dot x,
	$$
	by \eqref{eq_c0}. This implies the Weyl law when  $f=1$, and then the full statement.  
\end{proof}

\section{Semi-classical calculus on graded compact nil-manifold}
\label{sec_sccM}

In this section, we discuss the semi-classical calculus on graded compact nil-manifolds and for smoothing symbols.  

\subsection{Semi-classical pseudodifferential operators} 
\label{subsec_sclpdo}

The semi-classical pseudodifferential calculus 
in the context of groups of Heisenberg type was presented in \cite{FFPisa,FFJST}, but in fact extends readily to any graded group $G$. 
Here, we show how to define it on the quotient manifold $M$. 

We denote by $\cA_0={\cA}_0(M\times \Gh)$ the class of symbols, that is  of fields of operators defined on $M\times \Gh$ 
$$
\sigma(\dot x,\pi)\in \sL({\cH}_\pi),\;\;(\dot x,\pi)\in M\times \Gh,
$$
that are of the form 
$$
\sigma(\dot x,\pi) = \cF_G \kappa_{\dot x} (\pi),
$$
where $\dot x\mapsto \kappa_{\dot x}$ is a smooth  map from $M$ to $\cS(G)$. 
The group Fourier transform yields a bijection $(\dot x\mapsto \kappa_{\dot x}) \mapsto (\dot x \mapsto  \sigma(\dot x,\cdot) = \cF_G(\kappa_{\dot x}))$ from $C^\infty(M;\cS(G))$ onto $\cA_0$.  We equip $\cA_0$ with the Fr\'echet topology so that this mapping is an isomorphism of topological vector spaces. 

We observe that $\cA_0$ is an algebra for the usual composition of symbols. Furthermore, it is also equipped with the involution $\sigma \mapsto \sigma^*$, where $\sigma^* = \{\sigma(\dot x,\pi)^* , (\dot x,\pi)\in M\times \Gh\}$.

\medskip

 Note that by the Fourier inversion formula \eqref{eq_FI}, we have
 $$
 \kappa_{\dot x}(z) 
 =\int_{\Gh} \tr(\pi(z)\, \sigma(\dot x,\pi) )d\mu(\pi) 
 =\int _{\Gh} \tr(\pi(z)\, \sigma_G( x,\pi) )d\mu(\pi).
 $$

For any $\sigma\in \cA_0$, 
we  define the operator $\Op_G (\sigma)$ at $F\in \cS'(G)$  via
$$
\Op_G (\sigma) F(x) := F*\kappa_{\dot x}(x), 
\quad x\in G.
$$ 
This makes sense since, for each $x\in G$, the convolution of the tempered distribution $F$ with the Schwartz function  $\kappa_{\dot x}$
yields a smooth function $F*\kappa_{\dot x}$ on $G$.
Because of the Fourier inversion formula \eqref{eq_FI}, it may be written formally as 
$$
\Op_G(\sigma)F(x)= 
\int_{G\times \Gh} 
\tr 
( \pi(y^{-1} x) 
\sigma_G 
(x, \pi) ) 
F(y) dy d\mu(\pi).
$$

If $F$ is periodic, then $\Op_G(\sigma) F$ is also periodic with 
$\Op_G(\sigma) F \in C^\infty(G)^\Gamma$
and
we can view $F$ and $\Op_G(\sigma) F$ as functions on $M$, 
see Section \ref{subsec_periodicfcn}. 
In other words, 
we set for any $f\in \cD'(M)$ and $\dot x\in M$:
$$
\Op(\sigma) f (\dot x) := \Op_G(\sigma) f_G (x) =
(f_G *\kappa_{\dot x})_M(\dot x)
= \int_M f(\dot y) \sum_{\gamma\in \Gamma} \kappa_{\dot x} (y^{-1} \gamma x)\,  d\dot y
,
$$
and this defines the function $\Op(\sigma) f \in \cD(M)$.
We say that $\kappa_{\dot x}$  is the \emph{kernel associated} with the symbol $\sigma$ or  $\Op(\sigma)$. It satisfies the following properties:
\begin{lemma}
\label{lem_IntKernelOpsigma}
Let $\sigma\in \cA_0$ and let 	$\kappa_{\dot x}$  be its associated  kernel.
Then $\Op(\sigma)$ maps $\cD'(M)$ to $\cD(M)$ continuously, and its Schwartz integral is the smooth function $K$ on $M\times M$ given by
$$
K(\dot x,\dot y) = 
\sum_{\gamma\in \Gamma}  \kappa_{\dot  x}(y^{-1}\gamma  x).
$$
Consequently, the operator $\Op(\sigma)$ is Hilbert-Schmidt on $L^2(M)$ with Hilbert-Schmidt norm 
$$
\|\Op(\sigma)\|_{HS} = \|K\|_{L^2(M\times M)}.
$$
\end{lemma}
\begin{proof}
The results follow from the properties in  Sections \ref{subsec_periodicfcn} and \ref{subsec_OpGM}.
\end{proof}

Let $\eps\in (0,1]$ be a small parameter.
For every symbol $\sigma\in{\cA}_0$, we consider the symbol
\begin{equation}
\label{eq_sigmaeps}
\sigma^{(\eps)}:=
\{\sigma(\dot x,\eps\cdot  \pi) : (\dot x, \pi)\in M\times \Gh\},
\end{equation}
whose associated kernel is then 
\begin{equation}
\label{eq_kappaeps}	
\kappa^{(\eps)}_{\dot x}(z):= \eps^{-Q} \kappa_{\dot x}(\eps^{-1}\cdot z), \quad z\in G,
\end{equation}
 if $\kappa_{\dot x}=\kappa^{(1)}_{\dot x} $  is the kernel associated with the symbol $\sigma=\sigma^{(1)}$.
The semi-classical pseudo-differential calculus is then  defined via
$$
\Op^\eps (\sigma) :=  \Op (\sigma^{(\eps)})
\quad\mbox{and}\quad
\Op_G^\eps (\sigma) :=  \Op_G (\sigma^{(\eps)}).
$$

An interesting example is given by the spectral multiplier in a positive Rockland operator $\cR$ on $G$. 
For any  $\psi\in \cS(\bR)$,
the operator $\psi(\cR_M)$ defined spectrally as a bounded spectral multiplier on $L^2(M)$ coincides with the operator $\Op (\sigma)$ on $C^\infty(M)$
  with symbol $\sigma(\pi):=\psi( \widehat{\cR}(\pi))$ in $\cA_0$ independent of $\dot x\in M$:
  $$
\psi(\cR_M) = \Op( \psi (\widehat \cR)).
$$ 	
  The associated kernel   is $\kappa:=\psi( \cR(\pi))\delta_0\in \cS(G)$.  The integral kernel of $\psi(\cR_M)$ is a smooth function on $M\times M$ given by 
  $$
K(\dot x,\dot y) = 
\sum_{\gamma\in \Gamma}  \kappa(y^{-1}\gamma  x).
$$

For every $\eps\in (0,1]$,  we have $
\psi(\eps^\nu \cR_M) = \Op^{(\eps)} (\sigma)$ on $C^\infty(M)$,  
where $\nu$ is the degree of homogeneity of $\cR$.

In the rest of this section, we give the general properties of the semi-classical calculus, starting with the boundedness on $L^2$. 

\subsection{Boundedness in $L^2(M)$}

First, let us introduce the following norm on $\cA_0$:
$$
\|\sigma \|_{\cA_0} := \int_{G} \sup_{\dot x\in M}  |\kappa_{\dot x}(y)|dy, 
$$
where $\kappa_{\dot x}$ is the kernel associated with $\sigma \in \cA_0$.
This is a continuous seminorm on $\cA_0$.
Later on, we will use another continuous seminorm on $\cA_0$, which is given by 
\begin{equation}
\label{eq_Linftysigma}
\|\sigma\|_{L^\infty(M\times \Gh)}
:=
\sup_{(\dot x,\pi)\in M\times \Gh} \|\sigma(\dot x,\pi)\|_{\sL(\cH_\pi)}.	
\end{equation}
Here the supremum is the essential supremum with respect to the Plancherel measure. 
Note that since $\|\pi(f) \|_{\sL(\cH_\pi)} \leq \|f\|_{L^1(G)}$ for any $f\in L^1(G)$, we have
\begin{equation}
	\label{eq_normsLinftyA0}
\|\sigma\|_{L^\infty(M\times \Gh)} \leq \sup_{\dot x\in M} \|\kappa_{\dot x}\|_{L^1(G)} \leq \|\sigma \|_{\cA_0}.
\end{equation}

The main property of the semi-classical calculus regarding $L^2$-boundedness is the following:
\begin{proposition}
\label{prop_bddL2}
For every $\eps>0$ and $\sigma\in \cA_0$, 
$$
\| \Op^\eps (\sigma)\|_{\mathcal L(L^2(M))} \leq  \|\sigma^{(\eps)}\|_{\cA_0}
 =\|\sigma\|_{\cA_0}
$$
where $\sigma^{(\eps)}$ is given in \eqref{eq_sigmaeps}. \end{proposition}

\begin{proof}
The equality in the statement follows from 
a simple change of variable $y=\eps^{-1}\cdot z$ in 
$$
\|\sigma^{(\eps)}\|_{\cA_0}
=\|\sup_{\dot x_1\in M}|\kappa^{(\eps)}_{\dot x_1}|\|_{L^1(G)}
=\int_G \sup_{\dot x_1\in M}|\kappa_{\dot x_1}|(\eps^{-1}\cdot z) \ \eps^{-Q} dz
=\int_G \sup_{\dot x_1\in M}|\kappa_{\dot x_1}(y)|  dy = \|\sigma\|_{\cA_0}.
$$
Hence it suffices to show the case of $\eps=1$. We observe that 
we have for any $f\in \cD(M)$,
$$
|\Op(\sigma) f (\dot x)|
=\big| \int_M f(\dot y) \sum_{\gamma\in \Gamma} \kappa_{\dot x} (y^{-1} \gamma x)\,  d\dot y\big|
\leq  \int_M |f(\dot y)| \sum_{\gamma\in \Gamma} \sup_{\dot x_1\in M}|\kappa_{\dot x_1} (y^{-1} \gamma x)|\  d\dot y,
$$
consequently using \eqref{eq_dxddotx}
\begin{align*}
\|\Op(\sigma) f \|_{L^\infty(M)} 
&\leq \|f\|_{L^\infty(M)}
 \int_M \sum_{\gamma\in \Gamma} \sup_{\dot x_1\in M}|\kappa_{\dot x_1} (y^{-1} \gamma x)|\  d\dot y
 = \|f\|_{L^\infty(M)}\int_{G} \sup_{\dot x\in M}  |\kappa_{\dot x}(y)|dy,
 \\
 \|\Op(\sigma) f \|_{L^1(M)}& \leq 
\int_M |f(\dot y)| \int_{z\in G}\sup_{\dot x_1\in M}|\kappa_{\dot x_1} (y^{-1} z)| dz \  d\dot y
=
\|f\|_{L^1(M)}\int_{G} \sup_{\dot x\in M}  |\kappa_{\dot x}(z')|dz'.	
\end{align*}
In other words, the linear map $\Op(\sigma)$ extends continuously as an operator $L^p(M)\to L^p(M)$ for $p=1,\infty$ with norm $\leq \|\sigma\|_{\cA_0} $.
By interpolation (Riesz-Thorin theorem), we obtain the first inequality in the statement. 
\end{proof}

\subsection{Singularity of the operators as $\eps \to 0$.}
The following lemma is similar to Proposition 3.4 in \cite{FFchina}
and shows that the singularities of the integral kernels of the operators $\Op^{(\eps)}(\sigma)$ concentrate on the diagonal as $\eps\to 0$.
It may also justify for many semi-classical properties that
 the kernel associated with a symbol may be assumed to be compactly supported in the variable of the group:

\begin{lemma}
\label{lem_SvsCc}
	Let $\eta\in \cD(G) $ be identically equal to $1$ close to $0$. 
	Let $\sigma\in \cA_0$ and let $\kappa_{\dot x}(z)$ denote its associated kernel.
	For every $\eps>0$, the symbol $\sigma_\eps$ defined via
	$$
	\sigma_\eps (\dot x,\pi)   = \cF_G \left( \kappa_{\dot x} \eta (\eps \, \cdot)\right),
	$$ 
	that is, the symbol with associated kernel $\kappa_{\dot x}(z) \eta(\eps\cdot  z)$, is in $\cA_0$.
	Furthermore,
for all $N\in\bN$, there exists a constant $C=C_{N,\sigma,\eta}>0$ such that 
$$
\forall \eps\in (0,1]\qquad
\left\|  \sigma_\eps-\sigma\right\|_{\cA_0}
\leq C {\eps}^{N}.
$$
\end{lemma}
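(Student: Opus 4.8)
The statement to prove is that $\sigma_\eps \in \mathcal A_0$ and that $\|\sigma_\eps - \sigma\|_{\mathcal A_0} = O(\eps^N)$ for every $N$. The first claim is essentially formal: the map $\dot x \mapsto \kappa_{\dot x} \chi(\eps\,\cdot)$ is obtained by multiplying the $C^\infty(M:\cS(G))$-valued map $\dot x \mapsto \kappa_{\dot x}$ by the fixed Schwartz (indeed compactly supported smooth) function $\chi(\eps\,\cdot)$, and pointwise multiplication by a fixed element of $C_b^\infty(G)$ preserves $C^\infty(M:\cS(G))$ and acts continuously on it; applying $\cF_G$ then lands us back in $\cA_0$ by the very definition of $\cA_0$. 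So $\sigma_\eps \in \cA_0$, and the kernel associated with $\sigma_\eps$ is $\kappa_{\dot x}(z)\chi(\eps z)$ as claimed.

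For the quantitative estimate, I would unwind the definition of the seminorm:
$$
\|\sigma_\eps - \sigma\|_{\mathcal A_0}
= \int_G \sup_{\dot x\in M} \big| \kappa_{\dot x}(z)\big(\chi(\eps z) - 1\big)\big| \, dz.
$$
The key observation is that $\chi \equiv 1$ near $0$, so $1 - \chi(\eps z) = 0$ whenever $\|z\| \le c/\eps$ for some fixed $c>0$ (here $\|\cdot\|$ is any homogeneous quasi-norm on $G$; equivalently one can use $|\cdot|$), and $|1 - \chi(\eps z)| \le \|1-\chi\|_\infty \le 2$ everywhere. Hence the integral is supported on the region $\|z\| > c/\eps$, where I can exploit the Schwartz decay of $\kappa_{\dot x}$ uniformly in $\dot x$: since $\dot x \mapsto \kappa_{\dot x}$ is continuous from the compact manifold $M$ into $\cS(G)$, for every $L$ there is a constant $C_L$ with $\sup_{\dot x\in M} |\kappa_{\dot x}(z)| \le C_L (1+\|z\|)^{-L}$ for all $z\in G$. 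Therefore
$$
\|\sigma_\eps - \sigma\|_{\mathcal A_0}
\le 2 C_L \int_{\|z\| > c/\eps} (1+\|z\|)^{-L}\, dz.
$$
Choosing $L = N + Q + 1$ and using the integrability estimate \eqref{eq_int_finite} (or a direct polar-coordinates computation), the tail integral is bounded by a constant times $(c/\eps)^{-L+Q} = C' \eps^{L-Q} = C'\eps^{N+1} \le C' \eps^N$ for $\eps\in(0,1]$, which gives the desired bound with $C = C_{N,\sigma,\chi} := 2 C_{N+Q+1}\, C'$.

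\textbf{Main obstacle.} There is no serious obstacle here; the only points requiring a little care are (i) making precise that $\dot x \mapsto \kappa_{\dot x}$ being smooth (hence continuous) from the \emph{compact} manifold $M$ into $\cS(G)$ yields genuinely \emph{uniform}-in-$\dot x$ Schwartz bounds, which is where compactness of $M$ is used, and (ii) checking that the support condition $\chi \equiv 1$ near $0$ translates, after the dilation $z \mapsto \eps z$, into vanishing of $1-\chi(\eps z)$ on a ball of radius growing like $1/\eps$ — this uses that the dilations act on the quasi-norm by $\|\eps z\| = \eps\|z\|$. Once these two bookkeeping points are in place, the decay of the tail integral does all the work and the power of $\eps$ can be made arbitrarily large by taking $L$ large.
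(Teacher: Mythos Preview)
Your proof is correct and follows essentially the same route as the paper's: both exploit that $1-\chi(\eps\cdot)$ vanishes where $\|\eps z\|$ is small, the homogeneity $\|\eps z\|=\eps\|z\|$, and the uniform-in-$\dot x$ Schwartz decay of $\kappa_{\dot x}$. The only cosmetic difference is that the paper packages step two by writing $1-\chi(y)=\theta(y)\|y\|^N$ for a bounded $\theta$ vanishing near $0$, which pulls the factor $\eps^N$ out immediately and leaves the finite integral $\int_G \sup_{\dot x}|\kappa_{\dot x}(z)|\,\|z\|^N\,dz$, whereas you instead restrict to the tail $\|z\|>c/\eps$ and estimate it directly; both arguments are equivalent.
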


\begin{proof}
As the function $\eta$ is identically~$1$ close to $z=0$, for all $N\in\bN$, there exists a bounded continuous function $\theta$, identically 0 near 0,
such that 
$$
\forall y\in G,\;\; 1-\eta(y)=\theta(y) \| y\|^N,
$$
where $\|\cdot\|$ is a fixed homogeneous quasi-norm on $G$ (see Section \ref{subsubsec_dilations}). 
This notation implies
$$
\kappa_{\dot x}(z) - \kappa_{\dot x}(z) \eta(\eps\cdot  z)
= \kappa_{\dot x}(z) \theta (\eps \cdot z)  \|\eps \cdot z\|^N.
$$
As $\|\eps \cdot z\|=\eps \|z\|$, we obtain
$$
\left\|  \sigma_\eps - \sigma\right\|_{\cA_0}
=\int_G \sup_{\dot x}|\kappa_{\dot x}(z) - \kappa_{\dot x}(z) \eta(\eps\cdot  z)|dz
\leq \eps^N \|\theta\|_{L^\infty} \int_G \sup_{\dot x\in M} | \kappa_{\dot x}(z)| \|z\|^N dz.
$$
This last integral is finite and this concludes the proof.
\end{proof}

\subsection{Symbolic calculus}
\label{subsec_Symbolic_Calculus}

In order to present 
 the symbolic properties of the semi-classical calculus,
 we first need to introduce the notions of difference operators.
They aim at replacing the derivatives with respect to the Fourier variable in the Euclidean case.

If $q$ is a smooth function on $G$ with polynomial growth, 
we define the associated difference operator $\Delta_q$ via
$$
\Delta_q \widehat f (\pi) = \cF_G(q f) (\pi),
\quad \pi\in \Gh,
$$
for any function $f\in \cS(G)$ and even any tempered distribution $f\in \cS'(G)$.
On the commutative group $(\bR^n,+)$ if $q=x_j$ then $\Delta_q$ is the derivative $\partial_{\xi_j}$ on $j$th component of the Fourier variable (up to a constant depending on the convention about the Euclidean Fourier transform). 
In particular, considering the basis $(X_j)$ constructed in Section \ref{subsubsec_dilations}, 
we consider its dual  \cite[Proposition 5.2.3]{R+F_monograph} formed by the polynomials $q_\alpha$ such that $X^\beta q_\alpha=\delta_{\alpha,\beta}$ for all $\alpha,\beta\in \bN_0^n$.
We then define the difference operator associated with $q_\alpha(\cdot^{-1}): y \mapsto q_\alpha(y^{-1})$:
$$
\Delta^\alpha:=\Delta_{q_\alpha(\cdot^{-1})}.
$$

We obtain readily the following symbolic properties of the  calculus:
\begin{proposition}
\label{prop_symbolic_calculus}
\begin{enumerate}
\item 	If  $\sigma_1,\sigma_2\in \cA_0$, 
then 
$$
\Op^{(\eps)}(\sigma_1)\Op^{(\eps)}(\sigma_2)=
 \sum_{[\alpha]\leq N } \eps^{[\alpha]}\Op^{(\eps)}( \Delta^\alpha \sigma_1 \, X^\alpha_M \sigma_2) 
 \ + \ O( \eps)^{N+1},
$$
for any $N\in \bN_0$ in the $\cL(L^2(M))$ sense, that is, 
$$
\forall \eps\in (0,1]\qquad
\|\Op^{(\eps)}(\sigma_1)\Op^{(\eps)}(\sigma_2)
- \sum_{[\alpha]\leq N } \eps^{[\alpha]}\Op^{(\eps)}( \Delta^\alpha \sigma_1 \, X^\alpha_M \sigma_2) \| _{\cL(L^2(M))} \lesssim_{N,\sigma_1,\sigma_2} \eps^{N+1}.
$$
\item 	If  $\sigma\in \cA_0$, 
 then 
 $$
\Op^{(\eps)}(\sigma)^*
=
\sum_{[\alpha]\leq N } \eps^{[\alpha]}
\Op^{(\eps)}( \Delta^\alpha  X^\alpha \sigma^*)
 \ + \ O(\eps)^{N+1},
$$
for any $N\in \bN_0$ in the $\cL(L^2(M))$ sense, that is, 
 $$
 \forall \eps\in (0,1]\qquad
\|\Op^{(\eps)}(\sigma)^*
-
\sum_{[\alpha]\leq N } \eps^{[\alpha]}
\Op^{(\eps)}( \Delta^\alpha  X^\alpha \sigma^*)
  \| _{\cL(L^2(M))} \lesssim_{N,\sigma} \eps^{N+1}.
$$\end{enumerate}
\end{proposition}

The proof of Proposition \ref{prop_symbolic_calculus} is a straightforward adaptation of similar arguments
on  calculi   on $G$ and $M$ with symbol classes, see e.g. \cite{withSoren} or  \cite[Section 5.5]{R+F_monograph} (see also \cite{FFchina,FFJST,FFFGeomInv,FFFMilan,Fermanian+Letrouit}):
 
\begin{proof} We follow closely \cite{withSoren}, more precisely the proof of Theorem 5.1 therein given in its Appendix A.
We observe that   $\cA_0$ coincides with the smoothing symbols $S^{-\infty}(M\times \Gh)$ by \cite[Corollary 4.19]{withSoren}. 
Let $\sigma_1,\sigma_2 \in \cA_0$.
By \cite[Theorem 4.11]{withSoren},
$$
\Op^{(\eps)}(\sigma_1)\Op^{(\eps)}(\sigma_2)
=
\Op^{(\eps)}(\sigma)
$$
with the symbol $\sigma$
depending on $\eps\in (0,1]$ given by 
its convolution kernel  
$$
\kappa_{\dot x}(y)= \eps^Q \int_G \kappa^{(\eps)}_{\sigma_2, \Gamma xz^{-1}}( (\eps \cdot y)z^{-1})\ \kappa^{(\eps)}_{1,x}(z) dz 
=\int_G \kappa_{\sigma_2,\Gamma x(\eps \cdot z)^{-1}}( y z^{-1}) \ \kappa_{\sigma_1,\dot x}(z) dz .
$$
By the Taylor estimates due to Folland 
and Stein \cite{folland+stein_82} (also stated in \cite[Appendix A.1]{withSoren}), having fixed a quasi-norm $\|\cdot\|$, we have for any $N\in \bN$
$$
| \kappa_{\sigma_2,\Gamma x(\eps \cdot z)^{-1}}(w)
 -
 \sum_{[\alpha]\leq N} q_\alpha((\eps \cdot z)^{-1}) 
X^\alpha_{x} \kappa_{\sigma_2,\dot x}(w)|
 \leq C_{N} \sum_{\substack{|\alpha'|\leq \lceil N\rfloor\\ [\alpha']>N}}
(\eps \|z\|)^{[\alpha']}
\sup_{z'\in G } |X_{z'}^{\alpha'}
\kappa_{\sigma_2,\Gamma z'}(w)|.
$$
Hence, we obtain
\begin{align*}
&\|\sigma -\sum_{[\alpha]\leq N } \eps^{[\alpha]}  \Delta^\alpha \sigma_1 \, X^\alpha_M \sigma_2\|_{\cA_0}
=	\int_{G} \sup_{\dot x\in M}  |\kappa_{\dot x}(y)  
- \sum_{[\alpha]\leq N} \eps^{[\alpha]}
\kappa_{X^{\alpha_2}_M \sigma_2} * 
(q_\alpha(\cdot ^{-1}) \kappa_{\sigma_1,\dot x})
|dy
\\
&\qquad=	\int_{G} \sup_{\dot x\in M}  |\int_G (\kappa_{\sigma_2,\Gamma x(\eps \cdot z)^{-1}}( y z^{-1}) - \sum_{[\alpha]\leq N} q_\alpha((\eps \cdot z)^{-1}) 
X^\alpha_{x} \kappa_{\sigma_2,\dot x}(y z^{-1})
)\ \kappa_{\sigma_1,\dot x}(z) dz
|dy
\\
&\qquad  
\leq 
\int_{G} \sup_{\dot x\in M}  \int_G  
C_{N} \sum_{\substack{|\alpha'|\leq \lceil N\rfloor\\ [\alpha']>N}}
(\eps \|z\|)^{[\alpha']}
\sup_{\dot z'\in M } |X_{z'}^{\alpha'}
\kappa_{\sigma_2,\dot z'}(yz^{-1})|
|\kappa_{\sigma_1,\dot x}(z) |dz
 dy
\\
&\qquad  
\leq C_{N} \sum_{\substack{|\alpha'|\leq \lceil N\rfloor\\ [\alpha']>N}} \eps^{[\alpha']}
\int_{G}  \int_G  
\sup_{\dot z'\in M } |X_{z'}^{\alpha'}
\kappa_{\sigma_2,\dot z'}(yz^{-1})| \ 
 \|z\|^{[\alpha']}
\sup_{\dot x\in M} |\kappa_{\sigma_1,\dot x}(z) |dz
 dy
\\
&\qquad  \qquad = C_{N} \sum_{\substack{|\alpha'|\leq \lceil N\rfloor\\ [\alpha']>N}} \eps^{[\alpha']}
\|\sup_{\dot z'\in M } |X_{z'}^{\alpha'}
\kappa_{\sigma_2,\dot z'}\|_{L^1(G)}
\|
 \|\cdot \|^{[\alpha']}
\sup_{\dot x\in M} |\kappa_{\sigma_1,\dot x}\|_{L^1(G)}.
\end{align*}
This estimate together with Proposition \ref{prop_bddL2}  shows Part (1). 

We now prove 
Part (2) with the same arguments. 
Let $\sigma\in \cA_0$.
By \cite[Theorem 4.11]{withSoren},
$$
\Op^{(\eps)}(\sigma)^*
=
\Op^{(\eps)}(\sigma^{\eps,*})
$$
with the symbol $\sigma^{\eps,*}$
depending on $\eps\in (0,1]$ given by 
its convolution kernel  
$$
\kappa_{\sigma^{\eps,*},\cdot x}(y)
= \bar \kappa_{ \sigma, \Gamma x\eps\cdot y^{-1}}(y^{-1}) .
$$
By the Taylor estimates due to Folland and Stein, we have
$$
|\kappa_{\sigma,\Gamma x\eps\cdot y^{-1}}(w) 
-
\sum_{[\alpha]\leq N} q_\alpha(\eps\cdot y^{-1}) 
X^\alpha_x \kappa_{\sigma, x}(w)|
\leq C'_N  \sum_{\substack{|\alpha'|\leq \lceil N\rfloor\\ [\alpha']>N}} (\eps\|y\|)^{[\alpha']}
\sup_{z'\in G} |X^{\alpha'}_{z'} \kappa_{\sigma,\Gamma z'}(w)|.
$$
Hence, we have
\begin{align*}
&\|\sigma^{\eps,*}
-
\sum_{[\alpha]\leq N } \eps^{[\alpha]}
\Op^{(\eps)}( \Delta^\alpha  X^\alpha \sigma^*)\|_{\cA_0}
\\&\qquad=\int_G \sup_{\dot x\in M}
|\bar \kappa_{\sigma,\Gamma x\eps\cdot y^{-1}}(y^{-1}) 
-
\sum_{[\alpha]\leq N} q_\alpha(\eps\cdot y^{-1}) 
X^\alpha_x \bar \kappa_{\sigma, x}(y^{-1})| dy\\
&\qquad \leq \int_G \sup_{\dot x\in M}C'_N  \sum_{\substack{|\alpha'|\leq \lceil N\rfloor\\ [\alpha']>N}} (\eps\|y\|)^{[\alpha']}
\sup_{z'\in G} |X^{\alpha'}_{z'} \bar \kappa_{\sigma,\Gamma z'}(y^{-1})| dy\\
&\qquad\qquad = C'_N \sum_{\substack{|\alpha'|\leq \lceil N\rfloor\\ [\alpha']>N}} \eps^{[\alpha']}
\int_G \sup_{\dot z'\in M} |X^{\alpha'}_{z'} \bar \kappa_{\sigma,\dot z'}(y^{-1})| \|y\|^{[\alpha']} dy.
\end{align*}
This estimate together with Proposition \ref{prop_bddL2}  shows Part (2). 
\end{proof}

The analysis  shows that if $\sigma_2$ and $\sigma$ do not depend on $x$ then 
$$
\Op^{(\eps)}(\sigma_1)\Op^{(\eps)}(\sigma_2)
=
\Op^{(\eps)}(\sigma_{1}  \sigma_{2})
\qquad\mbox{and}\qquad
\Op^{(\eps)}(\sigma)^*
=
\Op^{(\eps)}(\sigma^*).
$$
Moreover, if $\sigma_1$ and $\sigma$ are the symbols of differential operators, then the sums over $\alpha$ in 
Proposition \ref{prop_symbolic_calculus}
are finite and the expansions are exact  for $N$ large enough. 

\section{Asymptotics}
\label{sec_asympt}

The semi-classical approach lends itself to understand asymptotics as the small parameter $\eps$ goes to 0.

\subsection{Estimates for kernels, Hilbert-Schmidt norms and traces}
\label{subsubsec_L2MGh}

The integral kernels enjoy the following estimates:
\begin{proposition}
	\label{prop_tr}
Let $\sigma\in \cA_0$ with associated kernel $\kappa_{\dot x}(z)$.

\begin{enumerate}
	\item The integral kernel  $K^{(\eps)}$ of  $\Op^{(\eps)}(\sigma)$ is smooth on $M\times M$ and satisfies for $\eps$ small: 
$$ 
\forall \dot x\in M\qquad
K^{(\eps)} (\dot x,\dot x) = \eps^{-Q} \kappa_{\dot x}(0) +O(\eps^\infty).
$$

\item For $\sigma\in \cA_0$,  $\Op^{(\eps)}(\sigma)$ is a Hilbert-Schmidt and trace-class operator on $M$ satisfying
$$
\tr \left(\Op^{(\eps)}(\sigma)\right)
= \eps^{-Q} \int_{M}\kappa_{\dot x}(0)  d\dot x
\ + \ O(\eps^\infty).
$$
\end{enumerate}
\end{proposition}
\begin{proof}
When the symbol $\sigma$ and the associated kernel $\kappa$ do not depend on $\dot x$, the statement boils down to the ones in \cite[Section 4.1]{nil-manifold}.
The case of a general $\sigma$ extends readily for Part (1), which then readily implies the trace property in Part (2) since 
$$
\tr \left(\Op^{(\eps)}(\sigma)\right) = \int_M K^{(\eps)} (\dot x,\dot x)d\dot x.
$$
\end{proof}

Note that the Fourier inversion formula \eqref{eq_FI} yields with the notation of Proposition \ref{prop_tr}:
$$
\kappa_{\dot x}(0) = \int_{\Gh} \tr\left( \sigma(\dot x,\pi) \right), 
\quad\mbox{so}\quad \int_{M}\kappa_{\dot x}(0)  d\dot x = \iint_{M\times \Gh} \tr\left( \sigma(\dot x,\pi) \right) d\dot x  d\mu(\pi).
$$

We open a brief parenthesis devoted to  
the tensor product of the Hilbert spaces $L^2(M)$ and $L^2(\Gh)$ defined in 
 Section \ref{subsec_aboutGnilp}:
$$
L^2(M\times \Gh) := \overline{L^2(M) \otimes  L^2(\Gh)}.
$$
We may identify $L^2(M\times \Gh)$ with the space of measurable fields of Hilbert-Schmidt operators $\sigma = \{\sigma(\dot x,\pi)\ : \ (\dot x,\pi) \in M\times \Gh\}$ such that 
$$
\|\sigma\|_{L^2(M\times \Gh)}^2 :=\iint_{M\times \Gh} \|\sigma(\dot x,\pi)\|_{HS(\cH_\pi)}^2 d\dot x d\mu(\pi)<\infty.
$$
Here $\mu$ is the Plancherel measure on $\Gh$, see Section \ref{subsec_aboutGnilp}.
The group Fourier transform yields an isomorphism between the Hilbert spaces $L^2(M\times \Gh)$ and $L^2(M\times G)$, 
and $\cF_G^{-1}\sigma (\dot x,\cdot)= \kappa_{\dot x}$ will still be called the associated kernel of $\sigma$.
The Plancherel formula yields:
$$
\|\sigma\|_{L^2(M\times \Gh)} = \|\kappa\|_{L^2(M\times G)}.
$$

Naturally $\cA_0\subset L^2(M\times \Gh)$. Proposition \ref{prop_tr} and the properties of the semi-classical calculus imply:
\begin{corollary}
\label{cor_HS}
For $\sigma\in \cA_0$,  $\Op^{(\eps)}(\sigma)$ is a Hilbert-Schmidt on $M$ satisfying
$$
\|\Op^{(\eps)}(\sigma)\|_{HS(L^2(M))}^2 
= \eps^{-Q} \|\sigma\|_{L^2(G\times M)}^2
\ + \ O(\eps).
$$
\end{corollary}

\begin{proof}
The properties of the semi-classical calculus (Section \ref{subsec_Symbolic_Calculus}) imply
\begin{align*}
\|\Op^{(\eps)}(\sigma)\|_{HS(L^2(M))}^2 
&= \tr 	\left(\Op^{(\eps)}(\sigma)\ \Op^{(\eps)}(\sigma)^*\right)
\\
&=\tr 
\left( \Op^\eps \left(  \sigma \sigma^*\right)\right) +O(\eps). 
\end{align*}
By Proposition \ref{prop_tr}, 
$$
\tr 
\left( \Op^\eps \left(  \sigma \sigma^*\right)\right)
=
\eps^{-Q}\int_{M\times \Gh} \tr \left(  \sigma \sigma^*\right) d\dot xd\mu +O(\eps^\infty).
$$
We conclude with $\int_{M\times \Gh} \tr \left(  \sigma \sigma^*\right) d\dot xd\mu=\|\sigma\|_{L^2(G\times M)}^2$. 
\end{proof}

\subsection{Non-centred quantum variance}
\label{subsec_Qvar}

In this section, we consider an orthonormal basis
 $(\varphi_j)$  of $\cR_M$-eigenfunctions for the Hilbert space $L^2(M)$ and the corresponding spectral counting function $N(\Lambda)$  as in Corollary \ref{cor_prop_RM}, and we set
$$
V_\eps(\sigma):=\frac 1{N(\eps^{-\nu})} \sum_{j : \eps^\nu \mu_j\in [0,1]}
\left|(\Op^{(\eps)} (\sigma) \varphi_j, \varphi_j)_{L^2(M)}\right|^2, \quad \sigma\in \cA_0,
	$$
	and in its lim-sup:
$$
V_0(\sigma):=\limsup_{\eps\to 0} V_\eps(\sigma).
	$$
	We call 	$V_0(\sigma)$ the (non-centred quantum) variance  associated with $(\varphi_j)$ for a symbol $\sigma\in \cA_0$.
	
\begin{remark}
\label{rem_originalmotivation}
	The motivation for considering this is that the properties of  $V_0$ imply reduced quantum ergodicity  in the case of the torus, see \cite[Section 3.1]{Nalini+Clo+Faure} and  Section \ref{subsubsec_Tn} below.
	The starting point and original motivation of this work was to understand whether the ideas could be applied in the context of nil-manifolds. 
	At least in the case of nil-manifolds of Heisentberg type, our analysis shows an obstruction, see Section \ref{subsubsec_obsgrHtype}
 below. 
\end{remark}

\subsubsection{First consequences of the semi-classical calculus}
From Proposition \ref{prop_bddL2},
we have for any $\sigma\in \cA_0$ and $\eps\in (0,1]$
$$
V_\eps(\sigma)
\leq \|\Op^{(\eps)} (\sigma)\|_{\sL(L^2(M))}
\leq \|\sigma\|_{\cA_0},
$$
Hence $V_0(\sigma)\in [0,\|\sigma\|_{\cA_0}] $ is finite.

The properties of the semi-classical calculus, see  Section \ref{subsec_Symbolic_Calculus},
 imply
 for any $\sigma\in \cA_0$
\begin{align}
[\eps^\nu \cR,\Op^{(\eps)}(\sigma)]
&=
\Op^{(\eps)}(\widehat \cR )\Op^{(\eps)}(\sigma) - \Op^{(\eps)}(\sigma \widehat \cR)
\nonumber
\\&=
\Op^{(\eps)}([ \widehat\cR, \sigma ])
+
\eps^{\upsilon_1} \sum_{[\alpha]=\upsilon_1} 
\Op^{(\eps)}(\Delta^\alpha \widehat \cR  \ X_{\dot x}^\alpha \sigma)
\ +\  O(\eps^{\upsilon_1+1}),
\label{eq_comRop}
\end{align}
where $\upsilon_1$ is the smallest weight of the dilations, see Section \ref{subsubsec_dilations}.
Therefore,
for any $\cR_M$-eigenfunction $\varphi$,  we have:
\begin{align}
0&=([\eps^\nu \cR,\Op^{(\eps)}(\sigma)]\varphi,\varphi)
\nonumber
\\&=
(\Op^{(\eps)}([ \widehat\cR, \sigma ])\varphi,\varphi)
+
\eps^{\upsilon_1} \sum_{[\alpha]=\upsilon_1} 
(\Op^{(\eps)}(\Delta^\alpha \widehat \cR  \ X_{\dot x}^\alpha \sigma)\varphi,\varphi)
\ +\  O(\eps^{\upsilon_1+1})\|\varphi\|_{L^2(M)}^2.
\label{eq_comRopphiphi}
\end{align}

The second term on the right-hand sides of  \eqref{eq_comRop} and \eqref{eq_comRopphiphi} leads us to  define the operation 
$$
\sE := \sum_{[\alpha] = \upsilon_1} \Delta^{\alpha}\widehat \cR X_{\dot x}^{\alpha},
$$
on the symbols. 
For example, we will compute the operation $\sE$ explicitely  for the intrinsic sub-Laplacians in the stratified case  in Lemma \ref{lem_compcL}.
We have obtained:
\begin{equation}
\label{eq_Opbracketvarphi}
	(\Op^{(\eps)} ([\widehat \cR ,\sigma]) \varphi,\varphi)_{L^2(M)}
=
-\eps^{\upsilon_1} (\Op^{(\eps)} (\sE \sigma) \varphi,\varphi)_{L^2(M)}+ O(\eps^{\upsilon_1+1})\|\varphi\|_{L^2(M)}^2.
\end{equation}

\subsubsection{Symbols annihilating $V_0$}
The equalities in \eqref{eq_comRop} and \eqref{eq_comRopphiphi}
 allow us to identify some symbols annihilating $V_0$:
\begin{lemma}
\label{lem_QVadR}
	If $\sigma = \ad (\widehat \cR) (\tau) = [\widehat \cR ,\tau]$  for some $\tau \in \cA_0$ then 
	$$
	V_\eps(\sigma) = O(\eps^{\upsilon_1})
	\qquad\mbox{so}\qquad V_0(\sigma) = 0.
	$$
\end{lemma}
\begin{proof}
From \eqref{eq_comRopphiphi} applied to $\tau$, we see 
	\begin{align*}
(\Op^{(\eps)} (\sigma) \varphi_j, \varphi_j)_{L^2(M)}
&= 
(\Op^{(\eps)} ([\widehat \cR ,\tau]) \varphi_j, \varphi_j)_{L^2(M)}
\\&= 
([\eps^\nu \cR_M, \Op^{(\eps)} (\tau) ]\varphi_j, \varphi_j)_{L^2(M)}+
O(\eps^{\upsilon_1}).
\end{align*}
The first term of the last right-hand side vanishes,  
and the statement follows. 
\end{proof}

Proceeding as in the proof of Lemma \ref{lem_QVadR}, we obtain:
\begin{lemma}
\label{lem_QVsEcomR}
	If $\sigma = \sE \tau$ with $\tau\in \cA_0$ commuting with $\widehat \cR$ then 
	$$
	V_\eps(\sigma) = O(\eps)
	\qquad\mbox{so}\qquad V_0(\sigma) = 0.
	$$
\end{lemma}
\begin{proof}
By \eqref{eq_Opbracketvarphi}, 
we have
	\begin{align*}
(\Op^{(\eps)} (\sigma) \varphi_j, \varphi_j)_{L^2(M)}
&= 
(\Op^{(\eps)} (\sE \tau) \varphi_j, \varphi_j)_{L^2(M)}\\
&= 
-\eps^{-\upsilon_1} (\Op^{(\eps)} ([\widehat \cR, \tau]) \varphi_j, \varphi_j)_{L^2(M)} +O(\eps) ,
\end{align*}
By hypotheses, $[\widehat \cR, \tau]=0$. The statement follows. 
\end{proof}

\subsubsection{Further consequences of the semi-classical calculus}
The properties of the semi-classical calculus already imply also the following properties: 
\begin{proposition}
\label{prop_QVL2}
\begin{enumerate}
\item We have for any $\sigma\in \cA_0$
$$
V_0 (\sigma)=V_0 (\sigma^*)
\quad\mbox{and}\qquad
V_0 (\sigma) \leq \frac 1{c} \|\sigma \|^2_{L^2(M\times \Gh)} ,
$$
with $c$  the constant from the Weyl laws (see Corollaries \ref{cor_prop_RM}).\item The map $V_0 :\cA_0 \to [0,\infty)$ is a quadratic form that 
 admits a unique continuous extension to the Hilbert space $L^2(M\times \Gh)$. 
It  satisfies the inequalities of Part (1) on $L^2(M\times \Gh)$, as well as 
\begin{equation}
\label{eq_prop_QVL2E01}
\forall \sigma\in L^2(M\times \Gh)\qquad 
V_0 (\sigma) =V_0(\widehat E[0,1] \sigma )=V_0(\widehat E[0,1] \sigma )=V_0(\widehat E[0,1] \sigma \widehat E[0,1]).
\end{equation}
Denoting by $V_0:  L^2(M\times \Gh)\times L^2(M\times \Gh) \to \bC$ the corresponding skew-hermitian map,  its kernel $\ker V_0 := \{\rho\in L^2(M\times \Gh) : V_0(\rho ,\cdot)=0\}$ is a closed subspace of $L^2(M\times \Gh) $ containing $\ad (\widehat \cR) \cA_0$ and $\sE \cA_0^{\widehat \cR}$.
 \end{enumerate}
\end{proposition}

Above and in the rest of the paper, 
we denote by 
$$
\cA_0^{\widehat \cR} := \{\sigma \in \cA_0 : \sigma \widehat \cR = \widehat \cR \sigma\}
$$
the space of symbols in $\cA_0$ commuting with $\widehat \cR$. This space will be further studied in Section \ref{subsec_symbcomR}.

\begin{proof}[Proof of Proposition \ref{prop_QVL2}]
The  properties of the semiclassical calculus implies 
$$
V_\eps ( \sigma ^*) =  V_\eps (\sigma)+O(\eps)
$$
by Proposition \ref{prop_symbolic_calculus}
while  by Corollaries \ref{cor_prop_RM} and  \ref{cor_HS}
$$
V_\eps ( \sigma )
\leq 
\frac {\|\Op^\eps(\sigma) \|_{HS(L^2(M))}^2
}{N(\eps^{-\nu})} = \frac{\eps^{-Q}\|\sigma \|_{L^2(M\times\Gh)}^2}{c \eps^{-Q}} +O(\eps),
$$
Part (1) follows by passing to the lim-sup.

 We observe that $V_\eps$ and $V_0$ are quadratic forms corresponding to the skew-hermitian maps $\cA_0\times \cA_0 \to \bC$ given for $\sigma_1,\sigma_2\in \cA_0$ by:
\begin{align*}
V_\eps(\sigma_1,\sigma_2)&:=\frac 1{N(\eps^{-\nu})} \sum_{j : \eps^\nu \mu_j\in [0,1]}
(\Op^{(\eps)} (\sigma_1) \varphi_j, \varphi_j)_{L^2(M)}
\overline{(\Op^{(\eps)} (\sigma_2) \varphi_j, \varphi_j)_{L^2(M)}},\\	
V_0(\sigma_1,\sigma_2)&:=\limsup_{\eps\to 0}
V_\eps(\sigma_1,\sigma_2).
\end{align*}

As $\cA_0\subset L^2(M\times \Gh)$ is dense in the Hilbert space 
$L^2(M\times \Gh)$, and $
V_0 (\cdot ) \leq c^{-1} \|\cdot \|^2_{L^2(M\times \Gh)}$ on $\cA_0$, $V_0$ extends uniquely into a continuous quadratic form on $L^2(M\times \Gh)$ satisfying $
V_0 (\cdot ) \leq c^{-1} \|\cdot \|^2_{L^2(M\times \Gh)}$.

Let $\psi  \in \cD(\bR)$ with $\psi\equiv 1$ on $[0,1]$.
When $\eps^\nu\mu_j\in [0,1]$ and $\sigma\in \cA_0$, we have
$$
V_\eps(\sigma)=V_\eps (\psi(\widehat \cR))\ \sigma) =V_\eps (\sigma \psi(\widehat \cR))).
$$
By taking a lim-sup and density of $\cA_0$ in $L^2(M\times \Gh)$, this implies 
$$
\forall \sigma\in L^2(M\times \Gh)\qquad 
V_0(\sigma)=V_0 (\psi(\widehat \cR))\ \sigma) =V_0(\sigma \psi(\widehat \cR))).
$$
This together with the Cauchy-Schwartz inequality 
imply that for any $\eta\in(0,1]$, 
\begin{align*}
|V_0(\widehat E[0,1] \sigma )	- V_0(\sigma)|
&\leq 
4 c^{-1} \|(\widehat E[-\eta,0]+\widehat E[1,1+\eta]) \sigma\|_{L^2(M\times \Gh)}
\|\sigma \|_{L^2(M\times \Gh)}.
\end{align*}
The right-hand side goes to 0 as $\eta\to 0$ by Lebesgue's theorem of dominated convergence. This shows $V_0 (\sigma) =V_0(\widehat E[0,1] \sigma)$ for any $\sigma\in L^2(M\times \Gh)$. We proceed in the same way to obtain 
$V_0 (\sigma) =V_0( \sigma\widehat E[0,1])$, showing \eqref{eq_prop_QVL2E01}.

Now let $\sigma,\tau\in \cA_0$.
The equality \eqref{eq_Opbracketvarphi}
yields
$$
V_\eps (\sigma, [\widehat \cR, \tau])	
 =-\eps^{\upsilon_1} V_\eps (\sigma, \sE\tau)	
 +O(\eps)^{\upsilon_1 +1}.
 $$
Taking the limsup, 
we obtain 
$$
	V_0(\sigma, [\widehat \cR, \tau])=\limsup_{\eps\to 0} -\eps^{\upsilon_1} V_\eps (\sigma, \sE\tau) =0,
$$
and if $\tau$ commutes with $\widehat \cR$ 
$$
	V_0 (\sigma, \sE\tau) 
=\limsup_{\eps\to 0} V_\eps (\sigma, \sE\tau) 
= \limsup_{\eps\to 0}-V_\eps (0)
=0.
$$
In other words, the two equalities above show respectively 
 $$
 \ad (\widehat \cR) \cA_0 \subset \ker V_0
 \qquad\mbox{and}\qquad
\sE \cA_0^{\widehat \cR}\subset \ker V_0.
$$
This concludes the proof. 
\end{proof}

\begin{remark}
Proposition \ref{prop_QVL2} and its proof imply that any operator $A\in \sL(L^2(M\times \Gh))$ satisfying 
$$
\forall \sigma_1,\sigma_2\in L^2(M\times \Gh)\qquad 
V_0(\sigma_1,A\sigma_2) = V_0 (A\sigma_1,\sigma_2),
$$
leads to a centred quantum variance
given via
$$
V_0(\sigma - A \sigma) =V_0(\sigma) - V_0(A \sigma).
$$
Examples of such $A$'s are an orthogonal projection 
onto a closed subspace of $\ker V_0$ or of $(\ker V_0)^\perp$; these examples are rather trivial in the sense that we would have  $V_0(A\sigma)=0$ for the former and $V_0(\sigma -A\sigma)=0$ for the latter.	
\end{remark}

 \subsection{Case of the torus}
 \label{subsubsec_Tn}
 
 Here, we  show that 
Proposition \ref{prop_QVL2} implies easily the reduced quantum regodicity in the case of the flat torus:
 \begin{theorem}
\label{thm_QETn}
Let $(\varphi_j)_{j\in \bN}$ be an orthonormal basis of $L^2(\bT^n)$ for the canonical Laplacian $\Delta=-\partial_1^2-\ldots -\partial_n^2$:
$$
\Delta\varphi_j = \mu_j \varphi_j, \quad 0=\mu_0<\mu_1 \leq \mu_2\leq \ldots .
$$ 
Then there exists a subsequence $(j_k)\subset \bN$ of density 1 such that 
$$
\forall f\in C(\bT^n)\qquad 
\lim_{k\to \infty} \int_M f(\dot x) |\varphi_{j_k}(\dot x)|^2 d\dot x = \int_M f(\dot x)d\dot x.
$$
\end{theorem}
 
 This is a sophisticated version of the  proof given in \cite[Section 3.1]{Nalini+Clo+Faure} given by more elementary means.

\subsubsection{Determining $(\sE \cA_0)^\perp$ in the torus case}
First, we apply what precedes to the case where $\cR = \Delta$ is the canonical Laplacian of $\bT^n$.
The $\cA_0$-symbols are the functions $\bT^n\times \bR^n \ni (\dot x,\xi) \mapsto \sigma(\dot x,\xi)$ in $C^\infty (\bT^n;\cS(\bR^n))$.
We compute easily 
$$
\sE = -2\sum_j\partial_{\dot x_j} \partial_{\xi_j}
\qquad\mbox{and}\qquad  
\ad (\widehat \cL) \cA_0=\{0\},\qquad  \sE \cA_0^{\widehat \cL} =\sE \cA_0,
$$
 since  
all the symbols commute with each other in this setting.
Moreover, $(\sE \cA_0)^\perp$
is the space of symbols $\sigma \in L^2(\bT^n;\cS(\bR^n))$ viewed as functions on $\bT^n\times \bR^n$ and satisfying  $\sE \sigma = 0$ in the sense of distributions on $\bT^n\times \bR^n$.
 We can determine the projection onto $(\sE \cA_0)^\perp$ 
using the well-known properties of the Fourier series and of the Euclidean Fourier transforms:
 \begin{lemma}
 \label{lem_thm_QETn}
 	With the setting of Theorem \ref{thm_QETn},
 	the orthogonal projection onto $(\sE \cA_0)^\perp\subset L^2(M\times \Gh)$ is given by 
 	 	$$
 \int_{\bT^n} \sigma, 
 	\qquad \mbox{where}\quad 
 	\left (\int_{\bT^n} \sigma\right )(\dot x,\xi) = \int_{\bT^n} \sigma(\dot x' ,\xi) d\dot x', \quad \sigma\in L^2(\bT^n\times \bR^n). 
 	$$
  \end{lemma}
\begin{proof}[Proof of Lemma \ref{lem_thm_QETn}]
If $\sigma\in L^2(\bT^n\times \bR^n)$ satisfies $\sE \sigma=0$ in the sense of distributions, then 
applying  the Fourier transform on $\bT^n\times \bR^n$
to $\sigma$ and denoting the resulting function as $\widehat \sigma (\ell,y)$, $\ell\in \bZ^n,$ $y\in \bR^n$, we see that 
$$
\forall \ell\in \bZ^n,\ y\in \bR^n\qquad 
(\sum_j \ell_j y_j) \widehat \sigma (\ell,y)=0,
$$
so $\widehat \sigma (\ell,\cdot )\equiv 0$ for any $\ell\in \bZ^n \setminus \{0\}$. 
This shows that $(\sE \cA_0)^\perp$  is the space of functions $\sigma\in L^2(\bT^n\times \bR^n)$ which are constant in $\dot x\in \bT^n$. The conclusion follows. 
\end{proof}

\subsubsection{Proof of Theorem \ref{thm_QETn}}
We can now  deduce the reduced QE from Proposition \ref{prop_QVL2}:

By Lemma \ref{lem_thm_QETn}, for any $\sigma\in L^2(\bT^n\times \bR^n)$, 
$$
\sigma-\int_{\bT^n}\sigma \in  
 \overline{\sE \cA_0}\subset \ker V_0
$$
 by Proposition \ref{prop_QVL2} (2), 
so $V_0 (\sigma-\int_{\bT^n}\sigma)=0$.
We apply this to $\sigma(\dot x,\xi) = f(\dot x)\psi(|\xi|^2)$
 for any 
$f\in L^2(\bT^n)$, having fixed a function  $\psi\in \cD(\bR)$ with  $\psi\equiv 1$ on $[0,1]$. We have
$$
V_0\left  (\sigma - \int_{\bT^n} \sigma \right )=\lim_{\eps\to 0}
\frac 1{N(\eps^{-\nu})} \sum_{j : \eps^\nu \lambda_j\in [0,1]} \left|
\int_{\bT^n} f(\dot x) |\varphi_j(\dot x)|^2 d\dot x - \int_{\bT^n} f(\dot x)  d\dot x\right|^2 =0
$$
The end of the proof is now a classical argument relying on the separability of the space $C(\bT^n)$ of continuous functions on $\bT^n$. Indeed, 
the null limit above together with a well-known result on Cesaro means of positive sequences, see e.g. \cite[Theorem 1.8]{Walters} (or equivalently with  a  direct proof `by hand' 
\cite[p.111]{Nalini+Clo+Faure}) yield the existence of a subsequence $(j_k)$ of density 1 such that 
$$
\lim_{j=j_k\to \infty} \int_{\bT^n} f(\dot x) |\varphi_j(\dot x)|^2 d\dot x = \int_{\bT^n} f(\dot x)d\dot x.
$$
However, this subsequence $(j_k)$ depends on $f$.
We apply the above result to each continuous function $f=f_\ell$, $\ell\in \bN_0$, of a dense countable family $(f_\ell)$ of $C(\bT^n)$.
We then   extract diagonally a new subsequence $j_k$ still of density one and with respect to which  convergence holds for any of the functions $f_\ell$, and therefore also
for any continuous function $f$. 

\subsubsection{Obstruction in the case of groups of Heisenberg type}
\label{subsubsec_obsgrHtype}
In the case of the torus, we have found an 
an operator $B\in \sL(L^2(M\times \Gh))$ satisfying for $\sigma = f \psi (\widehat \cR))$ with $f\in L^2(\bT^n)$, $\psi\in \cD(\bR)$ and $\psi=1$ on $[0,1]$
\begin{itemize}
	\item $V_0(\sigma-B\sigma)=0$ (in fact, this is satisfied for all $\sigma\in L^2(M\times \Gh)$,
	\item $(\Op^{(\eps)} B\sigma, \varphi_j,\varphi_j)$ is independent of $\eps$ and has a limit as $j\to \infty$ (in fact, this quantity  is independent of $j$ as well). 
\end{itemize} 
This operator $B$ was the orthogonal projection onto $(\sE\cA_0)^\perp$ given by  $\int_{\bT^n} $.

\smallskip

In the case of  groups of Heisenberg type with $\cR$ being the intrinsic sub-Laplacian $\cL$, we have the following inclusions:
$$
	\overline{\sE \cA_0^{\widehat \cL}} \subset \overline{ \ad (\widehat \cL) \cA_0 } \quad\mbox{in}\quad L^2(M\times \Gh).
$$
This is a direct consequence of \cite[Lemma 4.1 (2)]{FFJST}. 
Therefore, Proposition \ref{prop_QVL2} only yields that 
$\ker V_0$ contains the closure of $\ad (\widehat \cL) \cA_0$, or equivalently, that $(\ker V_0)^\perp $ is included in the closure of $\cA_0^{\widehat \cL}$.
This is not enough to determine an operator $B$ as above, 
as the closure of $\cA_0^{\widehat \cL}$ is too large for its orthogonal projector to be described in a way as simple as in the torus.  We will come back to this at the end of the paper. 
 
 \section{Semi-classical limits}
\label{sec_sclim}

An important advantage of our symbolic approach is that it yields a precise description of the limit as $\eps\to 0$ of quadratic quantities $(A \phi_\eps,\phi_\eps)$ for an operator (more precisely, a family of operators) $A=A^\eps $ in the semi-classical calculus and a family of functions $(\phi_\eps)_\eps$  in $L^2(M)$. 
This semi-classical limit is expressed as measures which are operator valued; this is due to the non-commutativity of our setting.

\subsection{Quadratic limits and states of $C^*(M\times \Gh) $}

We consider 
$$
C^*(M\times \Gh)  = \overline{\cA_0}^{\|\cdot\|_{L^\infty(M\times \Gh)}}
$$
the closure of $\cA_0$ for the norm 
$\|\cdot\|_{L^\infty(M\times \Gh)}$ defined in \eqref{eq_Linftysigma}.
Let us summarise its main properties as a $C^*$-algebra:
\begin{proposition}
\label{prop_algcA}
\begin{enumerate}
\item The space $C^*(M\times \Gh)$ is a separable non-unital type I $C^*$-algebra.	
	\item Let $\kappa\in \cS(G)$ with $\kappa\geq 0$ and $\int_G \kappa(y)dy=1$. Set $\kappa^{(t)} (y) = t^{-Q} \kappa(t\cdot y)$. 
Then the family of symbols  
	$\sigma_{\kappa,t}$ in $\cA_0$ given by $\sigma_{\kappa,t}(\dot x,\pi)  = \pi(\kappa^{(t)}) = t \cdot \pi (\kappa)$  is an approximation of the identity for $C^*(M\times \Gh)$ as $t\to 0$.
	This is also so for any subsequence $(\sigma_{t_j})_{j\in \bN}$ with $\lim_{j\to \infty}t_j \to 0$.
\end{enumerate}
	\end{proposition}
	
	The spectrum of the $C^*$-algebra $C^*(M\times \Gh)$ and its von Neumann algebra $L^\infty(M\times \Gh)$ will be described in 
	Sections \ref{subsec_dualC*MGh} and \ref{subsubsec_LinftyMGh} respectively.

\begin{proof}[Proof of Proposition \ref{prop_algcA}]
The space $\cA_0$ is the image via the group Fourier transform of $C^\infty (M \colon \cS(G))$.
It may be described as the closure of the algebraic tensor of $\cD(M)$ and $\cF_G \cS(G)$ for the corresponding topology.
We observe that $\|\cdot\|_{L^\infty(M\times \Gh)}$ is continuous for this topology, and that 
$\cD(M)$ and $\cF_G \cS(G)$ are dense in the  $C^*$ algebras $C(M)$ and $C^*(G)$ respectively.
Therefore,  $C^*(M\times \Gh) $ is the tensor product product of $C(M)$ with $C^*(G)$; as $C(M)$ is commutative, the min or max tensor products coincide.
As $C(M)$ is a separable and $C^*(G)$ is separable and type I  (see Section \ref{subsubsec_vNG}), 
 $C^*(M\times \Gh) $ is separable and type I, implying  Part (1).
 
By Proposition \ref{prop_app_id} (2) for $p=1$, we have 
$$
\lim_{t\to 0}\|\sigma_{\kappa,t} \tau - \tau\|_{L^\infty(M\times \Gh)}=0=\lim_{t\to 0}\| \tau\sigma_{\kappa,t} - \tau\|_{L^\infty(M\times \Gh)}.
$$
first for any $\sigma\in \cA_0$, and then for any $\sigma\in C^*(M\times \Gh)$ by density. 
This shows Part (2). 
\end{proof}

The states (i.e. the continuous linear positive forms  of norm 1) of the $C^*$-algebra $C^*(M\times \Gh) $ are useful when describing the following limits:
\begin{proposition}
\label{prop_scL}
	Let $(\phi_\eps)_{\eps\in (0,1]}$ be a bounded family  in $L^2(M)$.
Consider the associated linear functionals $\ell_\eps$, $\eps\in (0,1]$, on $\cA_0$ given by:
\begin{equation}
\label{eq_elleps}
\ell_\eps(\sigma) = \left(\Op^{(\eps)}(\sigma) \phi_\eps,\phi_\eps\right)_{L^2(M)}, \qquad \sigma\in \cA_0. 	
\end{equation}

\begin{enumerate}
\item For each $\sigma\in \cA_0$, 
$$
\forall \eps\in (0,1]\qquad
|\ell_{\eps} (\sigma)|\leq \|\sigma\|_{\cA_0} \sup_{\eps'\in (0,1]}\|\phi_{\eps'}\|_{L^2(M)}^2.
$$
\item We may 
extract a subsequence, denoted by $(\eps_k)_{k\in \bN}$, such that the limits of $\|\phi_{\eps_k}\|_{L^2(M)}$ and of $(\ell_{\eps_k}(\sigma))_{k\in \bN}$ for each $\sigma\in \cA_0$ exists:
 $$
 \exists \lim_{k\to \infty} \|\phi_{\eps_k}\|_{L^2(M)} :=c_0
 \qquad\mbox{and}\qquad 
 \forall \sigma\in \cA_0\qquad 
 \exists \lim_{k\to \infty} \ell_{\eps_k}(\sigma) =: \ell_0(\sigma).
 $$
This defines a linear map $\ell_0:\cA_0\to \bC$
satisfying $|\ell_0(\sigma)|\leq \|\sigma\|_{\cA_0} c_0^2$. 

\item The map $\ell_0$
extends uniquely and naturally to a continuous positive linear functional  still denoted by $\ell_0:C^*(M\times \Gh) \to \bC$.
If  $c_0=0$, then $\ell_{0}=0$.
If $c_0\neq 0$, then  $c_0^{-2}\ell_{0}$ is a state of the $C^*$ algebra  $C^*(M\times \Gh) $.
\end{enumerate}	
\end{proposition}

For instance, the case of $\phi_\eps= 1/\sqrt{\vol(M)}$ being a constant function on $M$ is easily determined. Indeed, we compute easily for any $\sigma\in \cA_0$ with convolution kernel $\kappa_{\dot x}$:
\begin{align*}
	\Op^{(\eps)} \phi_\eps  (\dot x) 
	&= (\vol(M))^{-1/2} \int_G \kappa_{\dot x} (z)dz, 
\\
\left(\Op^{(\eps)}(\sigma) \phi_\eps ,\phi_\eps \right)_{L^2(M)} 
&= (\vol(M))^{-1}\iint_{M\times G} \kappa_{\dot x}(z)dz d\dot x = (\vol(M))^{-1}
\int_M \sigma(\dot x, 1_{\Gh}) d\dot x.
\end{align*}
Hence in this case, the corresponding state of $C^*(M\times \Gh) $ is given by 
$$
\ell_0(\sigma)= (\vol(M))^{-1} \int_M \sigma(\dot x, 1_{\Gh}) d\dot x.
$$

Results similar to Proposition \ref{prop_scL} were obtained in \cite[Sections 5 and 6]{FFPisa}, \cite[Section 3.2]{FFJST}, \cite[Proof of Theorem 4.1]{FFchina} and heuristically in \cite{bologna}. Here, we give 
a  very detailed proof, especially for Part (3) in the next section.

\begin{proof}[Beginning of the proof of Proposition \ref{prop_scL}]
Part (1) follows readily from Proposition \ref{prop_bddL2}.	
Hence, for each $\sigma\in \cA_0$, $(\ell_\eps (\sigma))_{\eps\in (0,1]})$ is bounded and  we may extract a sequence  $(\eps_k)_{k\in \bN}$ going to 0 as $k\to \infty$ such that the sequence $(\ell_{\eps_k} (\sigma))_{k\in \bN}$ converges. 
By a diagonal extraction argument, we may assume that these sequence $(\eps_k)_{k\in \bN}$  is such that 
$\lim_{k\to \infty} \ell_{\eps_k}(\sigma)$ exists for every $\sigma$ in a countable subset $S$ of $\cA_0$ that is dense in $(\cA_0,\|\cdot\|_{\cA_0})$ as well as for $\lim_{k\to \infty} \|\phi_{\eps_k}\|_{L^2(M)}$.
This implies with a $3\epsilon$-argument and Part (1) that for this same subsequence  $(\eps_k)_{k\in \bN}$ and any $\sigma\in \cA_0$, 
the sequence $(\ell_{\eps_k}(\sigma))_{k\in \bN}$ is Cauchy, and we denote its limit by 
$\ell_0(\sigma)= \lim_{k\to \infty} \ell_{\eps_k}(\sigma):=\ell_0(\sigma)$.
This  defines a linear map $\ell_0:\cA_0\to \bC$ independently of a choice of subset $S$
and  satisfying $|\ell_0(\sigma)|\leq \|\sigma\|_{\cA_0} c_0^2$ for any $\sigma\in \cA_0$.
Part (2) is proved.

For Part (3), we will need  to show that $\ell_0$ extends naturally to $C^*(M\times \Gh)$ 
as a positive  functional.
We can already prove the positivity.
Indeed,  for any $\sigma\in \cA_0$, we have
$$
 \ell_{\eps} (\sigma \sigma^*)
= \left(\Op^{(\eps)}(\sigma\sigma^*) \phi_\eps,\phi_\eps\right)_{L^2(M)}
=\left(\Op^{(\eps)}(\sigma)\Op^{(\eps)}(\sigma)^* \phi_\eps,\phi_\eps\right)_{L^2(M)}
+O(\eps), 
$$ 
by the properties of the functional calculus (see Proposition \ref{prop_symbolic_calculus}), 
and 
$$
\left(\Op^{(\eps)}(\sigma)\Op^{(\eps)}(\sigma)^* \phi_\eps,\phi_\eps\right)_{L^2(M)} = 
\|\Op^{(\eps)}(\sigma)^* \phi_\eps\|_{L^2(M)} ^2\geq 0.
$$
Hence $\ell_0:\cA_0\to \bC$ satisfies 
$\ell_0(\sigma \sigma^*)\geq 0$
 for any $\sigma\in \cA_0$.
\end{proof}

\subsection{Proof of Proposition \ref{prop_scL} (3)}

In this section, we finish the proof of Proposition \ref{prop_scL}.
It will utilise  properties of symbols in tensor forms 
as well as of the von Neumann algebra $L^\infty (M\times \Gh)$.

\subsubsection{Symbols in tensor forms}
\label{subsubsec_symbtensor}
The following technical lemma and its proof show how to approximate given symbols by symbols in tensor forms:
\begin{lemma}
\label{lem_symbtensor}
\begin{enumerate}
	\item 	Let $\sigma\in C^*(M\times \Gh) $. 
	For any $\eta>0$, there exists a symbol $\tau_\eta\in \cA_0$ 
	such that $\|\sigma -\tau_\eta\|_{L^\infty(M\times \Gh)}<\eta$. 
	Moreover, we can construct $\tau_\eta$ of the form 
	$\tau_\eta = \sum_{m=1}^N \psi_m \widehat \kappa_m$ for some integer $N\in \bN$, 
functions $\psi_m\in C^\infty (M;[0,1]),\kappa_m\in \cS(G)$, $m=1,\ldots,N$, 
satisfying 
$$
\sum_{m=1}^N \psi_m =1 \ \mbox{on}\ M
\quad\mbox{and} \quad 
\|\widehat \kappa_m \|_{L^\infty(\Gh)}< \frac \eta 2 + \|\sigma\|_{L^\infty(M\times \Gh)}.
$$
\item Let $\sigma_1,\sigma_2\in \cA_0$ and let $\eta>0$.
	There exist symbols $\tau_1,\tau_2\in \cA_0$ such that $\|\sigma_i - \tau_i\|_{\cA_0}<\eta$, $i=1,2$. Moreover, we can construct them of the form $\tau_i=\sum_{m=1}^N \psi_m 
	\sigma_i(\dot x_m, \cdot)$ for some  integer $N\in \bN$, points $\dot x_1, \ldots, \dot x_m$
 in $M$ and functions $\psi_1, \ldots, \psi_m$ in $C^\infty (M;[0,1])$ 	satisfying
	$\sum_{m=1}^N \psi_m =1$ on $M$.
\end{enumerate}
In both cases, we may assume that for any $m_0=1,\ldots,N$, 
the number of $m'=1,\ldots,N$ such that $\supp\, \psi_{m'}$ 
intersect  $\supp\,\psi_{m_0}$ is bounded by a constant depending only on $M$. We may also assume that $\psi_m=\theta_m^2$ for some $\theta_m\in C^\infty (M;[0,1])$.
	\end{lemma}
	
	The proof is classical and given for the sake of completeness. 
\begin{proof}[Proof of Lemma \ref{lem_symbtensor}] 
As it is easier to describe open sets and covering in terms of a distance, we  equipp  the smooth manifold $M$ with a Riemannian structure. To fix the idea, we consider a fix a scalar product on $\fg$; this induces a left-invariant Riemannian metric on $G$ and then a Riemannian metric on $M$.
We denote by $d$ the Riemannian distance and by $B(\dot x,r)=\{\dot y\in M : d(\dot x,\dot y)<r\} $ the open ball about $\dot x$ of radius $r>0$.
As the smooth Riemannian manifold $M$ is compact and of dimension $n$, for any $r\in (0,1)$, the (Riemannian) volume  of $B(\dot x,r)$ is comparable to $r^n$:
\begin{equation}
\label{eq_Rvolball}
	\exists c_1,c_2>\qquad \forall r\in (0,1]\qquad 
c_1r^n \leq |B(\dot x,r)| \leq c_2r^n.
\end{equation}

We fix $\sigma\in C^*(M\times \Gh) $ and $\eta>0$.
As the function $\dot x\mapsto \sigma(\dot x, \cdot)$, $M\to L^\infty(\Gh)$ is continuous, 
it is also equicontinuous and there exists $\delta>0$ such that 
$$
\forall \dot x,\dot x'\in M
\qquad d(\dot x,\dot x')<\delta
\ \Longrightarrow \ 
\|\sigma(\dot x,\cdot)-\sigma(\dot x',\cdot)\|_{L^\infty(\Gh )}<\eta/2.
$$
As $M$ is compact, we extract a finite cover from $M=\cup_{\dot x\in M} B(\dot x,\delta/2)$:
there exist  points $\dot x_1,\ldots,\dot x_m$ in $M$ 	such that $M=\cup_{m=1}^N B(\dot x_m,\delta/2)$. 
We then construct a finite subordinate partition of the identity: we start with  functions $\tilde \psi_1, \ldots, \tilde \psi_m$ in $C^\infty(M;[0,1])$ such that $\tilde \psi_m=1$ on $B(\dot x_m,\delta/2)$ but $\tilde \psi_m=0$ outside $B(\dot x_m,\delta)$, and then set 
$\psi_m := \tilde \psi_m /(\sum_{m'=1}^N \tilde \psi_{m'})$  for $m=1,\ldots, N$.
By density of $\cF_G \cS(G)$ in $L^\infty (\Gh)$, 
there exists $\kappa_m\in \cS(G)$ such that 
$\|\widehat \kappa_m -\sigma(\dot x_m,\cdot)\|_{L^\infty(\Gh)} <\eta/2$.
We check readily $\sum_m \psi_m=1$, and 
\begin{align*}
&\|\sigma - \sum_{m=1}^N \psi_m \widehat \kappa_m\|_{L^\infty(M\times \Gh )}
 \leq \sum_{m=1}^N  
\|   \psi_m (\sigma -\sigma(\dot x_m,\cdot))\|_{L^\infty(M\times \Gh )}
+ \| \sum_{m=1}^N \psi_m (\sigma(\dot x_m,\cdot) -\widehat \kappa_m)\|_{L^\infty(M\times \Gh )}	\\
&\qquad \leq \max_{m=1,\ldots,N} \sup_{\dot x \in B(\dot x_m, \delta)} \|\sigma(\dot x, \cdot) -\sigma(\dot x_m,\cdot)\|_{L^\infty(\Gh)} + \max_{m=1,\ldots,N} \|\sigma(\dot x_m,\cdot)-\widehat \kappa_m \|_{L^\infty(\Gh)} 
< \frac \eta 2+\frac \eta 2 = \eta.
\end{align*}
This shows Part (1).

For Part (2), 
we first fix a quasinorm $\|\cdot\|$ on $G$ and set the constant 
$$
\int_G (1+\|y\|)^{-(Q+1)}dy:=c'\in (0,\infty) .
$$
By continuity of  $\dot x \mapsto \kappa_{\sigma,\dot x}(1+\|\cdot\|)^{-(Q+1)}$, $M\to L^\infty(G)$, on the compact manifold $M$, 
there exists $\delta>0$ such that for $i=1,2$, we have:
$$
\forall \dot x,\dot x'\in M
\qquad d(\dot x,\dot x')<\delta
\ \Longrightarrow \ 
\|(1+\|\cdot\|)^{-(Q+1)}(\kappa_{\sigma_i,\dot x}-\kappa_{\sigma_i, \dot x'})\|_{L^\infty(G )}<\eta/c'.
$$
We extract a finite cover for the compact manifold $M=\cup_{\dot x} B(\dot x,\delta/2)=\cup_{m=1}^N B(\dot x_m,\delta/2)$, 
and a finite subordinate partition of unity $\psi_m$, $m=1,\ldots, N$ as above
from the balls $B(\dot x_m,\delta/2)$ and $B(\dot x_m,\delta)$. 
We have for $i=1,2$
\begin{align*}
\|\sigma_i - \tau_i\|_{\cA_0}
& \leq  \max_{m=1,\ldots,N}  \int_G \sup_{\dot x \in B(\dot x_m, \delta)} |\kappa_{\sigma_i,\dot x}(y) - \kappa_{\sigma_i,\dot x_m}(y)|dy
 \\&<  \frac{\eta}{c'}\int_G (1+\|y\|)^{-(Q+1)}dy =\eta. 
\end{align*}
This proves Part (2).

To show the last part of the statement, we need to amend the proofs above. 
We may assume that $\psi_m=\theta_m^2$ and $\delta\leq 1$. 
Furthermore, instead of considering a finite cover from $M=\cup_{\dot x\in M} B(\dot x,\delta/2)$, we first consider a maximal family of disjoint balls $B(\dot x',\delta/4)$, $\dot x'\in X$. By maximality, we obtain a cover $M=\cup_{\dot x'\in X} B(\dot x',\delta/2)$ from which we extract a finite cover as above but with balls whose centres are at distance at least $\delta/2$ from each other. 
Fixing one centre $\dot x'_0$, 
we consider all the balls $B(\dot x'_{m_j},\delta/2)$, $j = 1,\ldots,J$, intersecting  $B(\dot x'_0,\delta/2)$; then 
$$
B(\dot x'_0,\delta) \supset \sqcup_{j=1}^J B(\dot x'_{m_j},\delta/4), 
\qquad \mbox{so}\qquad
c_1 \delta^n \geq |J| c_2 \left(\frac \delta 4\right)^n,
$$
by \eqref{eq_Rvolball}.
Hence $|J| \leq c_1c_2^{-1}4^{-n}$.
This concludes the proof. 
\end{proof}

The next statement and its proof give bounds in terms of $\|\cdot\|_{L^\infty(\Gh)}$ for quadratic expression of  symbols in tensor forms as in Lemma \ref{lem_symbtensor} using properties of the semiclassical calculus:
\begin{lemma}
\label{lem_symbtensorquad}
	Let $\tau\in \cA_0$ be of the form $\tau =\sum_{m=1}^N \psi_m \widehat \kappa_m $
	for some integer $N\in \bN$, 
functions $\psi_m\in C^\infty (M;[0,1]),\kappa_m\in \cS(G)$, $m=1,\ldots,N$, 
satisfying 
$\sum_{m=1}^N \psi_m =1 \ \mbox{on}\ M$, 
$\psi_m=\theta_m^2$ for some $\theta_m\in C^\infty (M;[0,1])$.
Then there exists a constant $C>0$ such that 
for any $\eps\in (0,1]$ and $f\in L^2(M)$, we have
	$$
 |(\Op^\eps (\tau_\eta) f,f)_{L^2(M)}|\leq 
\left( C_0 \max_{m=1,\ldots, N}\|\widehat \kappa_m\| _{L^\infty(\Gh)} +   C\eps \right )\|f\|^2_{L^2(M)},
	$$
	where $C_0$ is the maximum over 
	for each $m_0=1,\ldots,N$ of
the number of $m'=1,\ldots,N$ such that $\supp\, \psi_{m'}$ 
intersect  $\supp\,\psi_{m_0}$.
\end{lemma}
\begin{proof}[Proof of Lemma \ref{lem_symbtensorquad}]
	The properties of the calculus implies for any $\psi\in C^\infty(M)$ and $\kappa\in \cS(G)$,
$$
	(\Op^\eps (\psi \widehat \kappa ) f,f)_{L^2(M)}
	= ( (f_G * \kappa^{(\eps)})_M, \bar \psi f)_{L^2(M)},
	\qquad \mbox{where}\qquad \kappa^{(\eps)} (y) = \eps^{-Q}\kappa_m(\eps^{-1}y),
	$$
while  by the Cauchy-Schwartz inequality and Lemma \ref{lem_RegRep},
$$
	|(\Op^\eps (\psi \widehat \kappa ) f,f)_{L^2(M)}|
	\leq \|\widehat \kappa^{(\eps)}\|_{L^\infty(\Gh)}
	\|f\|^2_{L^2(M)},
	\qquad \mbox{with}\qquad \|\widehat \kappa^{(\eps)}\|_{L^\infty(\Gh)}=\|\widehat \kappa\|_{L^\infty(\Gh)}.
	$$	
	
	With the notation of the statement,
the properties of the calculus also implies
	\begin{align*}
	(\Op^\eps (\tau_\eta) f,f)_{L^2(M)}
	&= \sum_{m,m'=1}^N(\Op^\eps (\psi_m \theta_{m'}^2\widehat \kappa_m) f,f)_{L^2(M)}
	\\&= \sum_{m,m'=1}^N(\Op^\eps ( \widehat \kappa_m) \theta_{m'} f, \theta_{m'} \psi_m f)_{L^2(M)} +O(\eps)\|f\|^2_{L^2(M)}.
	\end{align*}
	With the remarks above,   we obtain
	$$
|(\Op^\eps ( \widehat \kappa_m) \theta_{m'} f, \theta_{m'} \psi_m f)_{L^2(M)} |
	\leq \|\widehat \kappa_m\| _{L^\infty(\Gh)} 
\|\theta_{m'} f\|_{L^2(M)}\|\theta_{m'}\psi_m f\|_{L^2(M)} ,
$$
and consequently,
$$
\sum_{m,m'=1}^N|(\Op^\eps ( \widehat \kappa_m) \theta_{m'} f, \theta_{m'} \psi_m f)_{L^2(M)}|
\leq 
\max_{m=1,\ldots, N}\|\widehat \kappa_m\| _{L^\infty(\Gh)}
C_0 \sum_{m'=1}^N \|\theta_{m'}f\|^2_{L^2(M)}
$$
The conclusion follows from $\sum_{m'=1}^N\|\theta_{m'} f\|_{L^2(M)}^2=\|f\|_{L^2(M)}^2$.
\end{proof}

We can now show part of  Proposition \ref{prop_scL} (3), more precisely  the natural extension of $\ell_0$ to a positive continuous functional on $C^*(M\times \Gh)$.

\begin{proof}[Beginning of the proof of Proposition \ref{prop_scL} (3)]
For each (fixed) $\sigma\in C^*(M\times \Gh) $, 
we make  the following two observations:
\begin{itemize}
	\item Observation 1: For each $\eta>0$, considering $\tau_\eta$ constructed via Lemma \ref{lem_symbtensor} (1), we have by Lemma \ref{lem_symbtensorquad}:
	$$
	|\ell_\eps(\tau_\eta)| \leq C_0 \left ( \|\sigma\|_{L^\infty(M\times \Gh)}+ \frac \eta 2  +O(\eps)\right )\|\phi_\eps\|_{L^2(M)}^2,
	$$
	so taking the limit as $\eps=\eps_k$, $k\to\infty$:	
	$$
	|\ell_0(\tau_\eta)| \leq  c_0^2( \|\sigma\|_{L^\infty(M\times \Gh)}+ \frac \eta 2 ).$$ 
	\item Observation 2: If $(\sigma_{1,m})_{m\in \bN}$ and $(\sigma_{2,m})_{m\in \bN}$ are two sequences in $\cA_0$, then 
$$
	|\ell_0(\sigma_{1,m})-\ell_0(\sigma_{2,m})| \leq C_0 
	\left( \frac 2n +  \|\sigma_{1,m}-\sigma_{2,m}\|_{L^\infty(M\times \Gh)} \right ) c_0^2,
 	$$
 	with $C_0$ a constant of $M$.
 	Indeed, 		
considering $\tau_{i,m}\in \cA_0$, $i=1,2$, constructed as Lemma \ref{lem_symbtensor} (2) for the symbols $\sigma_{i,m}\in \cA_0$, $i=1,2$, and $\eta=1/m$, 
	we have
	\begin{align*}
		|\ell_\eps (\sigma_{1,m}-\sigma_{2,m})|
		 &\leq 
		|\ell_\eps (\sigma_{1,m} -\tau_{1,m})|
		+|\ell_\eps (\sigma_{2,m} -\tau_{2,m})|
+|\ell_\eps (\tau_{1,m} -\tau_{2,m})|\\
& \leq  C_0 \left( \frac 2m +  \|\sigma_1-\sigma_2\|_{L^\infty(M\times \Gh)} \right)\|\phi_\eps\|_{L^2(M)}^2 + 
 C\eps \|\phi_\eps\|_{L^2(M)}^2, 
	\end{align*}
	by Part (1) and Lemma \ref{lem_symbtensorquad}; 
		the observation follows from taking  the limit as $\eps=\eps_k$, $k\to \infty$.
\end{itemize}

Observation (2) implies the uniqueness of a limit for $(\ell(\sigma_m))_{m\in \bN}$ for a sequence $(\sigma_m)$  
in $\cA_0$ converging to $\sigma$ for $\|\cdot\|_{L^\infty(M\times \Gh)}$.
The existence is provided by Observation (1):
since $(\ell_0(\tau_\eta))_{\tau\in (0,1]}$ is bounded, there exists a sequence $(\eta_{n})_{n\in \bN}$ converging to $0$ as $n\to \infty$ such that $(\ell_0(\sigma_m))_{n\in \bN}$ converges, where $\sigma_m=\tau_{\eta_{n}}$. 
Moreover, the limit is  $\leq c_0^2\|\sigma\|_{L^\infty(M\times \Gh)}$.
This shows that the map $\ell_0$ extends naturally  to $C^*(M\times \Gh) $. The resulting map (for which we keep the same notation) is a linear functional on $C^*(M\times \Gh) $ that is positive and continuous with operator norm $\|\ell_0\|_{\sL(C^*(M\times \Gh) ,\bC)}\leq C_0  c_0^2$.
By \cite[Section 2.1]{Dixmier},
 it remains to show $\|\ell_0\|_{\sL( C^*(M\times \Gh),\bC)}=c_0^2$ 
to conclude the proof, and we already know
\begin{equation}
\label{eq_normell0}
	\|\ell_0\|_{\sL(C^*(M\times \Gh),\bC)} = \lim_{t\to 0} \ell_0(\sigma_{\kappa,t}),
\end{equation}
where $\sigma_{\kappa,t}$, $t>0$, is an approximate identity as in Proposition \ref{prop_algcA} (to fix the ideas). 
\end{proof}

\subsubsection{The von Neumann algebra $L^\infty(M\times \Gh)$}
\label{subsubsec_LinftyMGh}
From the group case recalled in Section \ref{subsubsec_vNG}
and the proof of Proposition \ref{prop_algcA} (1), 
routine arguments show that the von Neumann algebra generated by the $C^*$-algebra $C^*(M\times \Gh) $ is the space
$L^\infty (M\times \Gh)$
of measurable fields of operators that are bounded, that is, 
of measurable fields of operators
 $\sigma=\{\sigma(\dot x,\pi)\in \sL(\cH_\pi): (\dot x,\pi)\in \Gh\}$ such that
 $$
 \exists C>0\qquad \|\sigma(\dot x,\pi)\|_{\sL(\cH_{\pi})} \leq C 
 \ \mbox{for} \ d\dot xd\mu(\pi)\mbox{-almost all} \ (\dot x,\pi) \in M\times \Gh.
 $$
 The smallest of such constant $C>0$ is the norm 
 $$
 \|\sigma\|_{L^{\infty}(M\times \Gh)}=\sup_{(\dot x,\pi)\in M\times \Gh }\|\sigma(\dot x,\pi)\|_{\sL(\cH_{\pi})},
 $$
 with $\sup$ understood as an $d\dot xd\mu(\pi)$-essential supremum of $\sigma$ in $L^\infty (M\times \Gh)$. 

Naturally, $L^\infty (M\times \Gh)$ contains $C^*(M\times \Gh) $, but also  many other important symbols, such as the ones corresponding to symbols in the von Neumann but not $C^*$ algebra of the group $G$, for instance the identity symbol:
$$
\id :=\{\id_{\cH_\pi}, \pi\in \Gh\} \in L^\infty (\Gh), 
\qquad \id :=\{\id_{\cH_\pi}, (x,\pi)\in M\times \Gh\} \in L^\infty (M\times \Gh).
$$

We check readily that the subspace $C^*(M\times \Gh) \oplus \bC \id$ of $L^\infty(M\times \Gh)$ is a $C^*$-algebra for the $\|\cdot\|_{L^\infty(M\times \Gh)}$ norm, with unit $\id$ and  enveloping von Neumann algebra  $L^\infty(M\times \Gh)$.
	 We can now conclude the proof of  Proposition \ref{prop_scL} (3). 
	 
\begin{proof}[End of the proof of Proposition \ref{prop_scL} (3)]
We set for any $\eps\in (0,1]$, $\sigma_0\in \cA_0$, $\sigma_1\in C^*(M\times \Gh)$, and $a_0\in \bC$,
$$
	\tilde \ell_\eps (\sigma_0+a_0 \id) := \ell_\eps (\sigma_0) +a_0 \|\phi_\eps \|_{L^2(M)}^2\qquad\mbox{and}\qquad 
	\tilde \ell_0(\sigma_1+a_0 \id):=\ell_0(\sigma_1) +a_0 c_0^2.
	$$
	This defines a continuous linear functional $\tilde \ell_0$ on the Banach space $(C^*(M\times \Gh)\oplus \bC \id, \|\cdot\|_{L^\infty (M\times \Gh)})$, and   linear functionals 
	$\ell_\eps$ on the  vector space $\cA_0\oplus \bC \id$. They  satisfy for any $\sigma_0\in \cA_0$, $a_0\in \bC$, 
 $$
 \exists \lim_{k\to \infty} \tilde \ell_{\eps_k}(\sigma_0+a_0\id ) =\ell_0(\sigma_0) +a_0 c_0^2=\tilde \ell_0(\sigma_0+a_0\id).
 $$
and
	\begin{align*}
	\tilde \ell_\eps ((\sigma_0+a_0\id)(\sigma_0+a_0\id)^*) 
	&= ( \Op^{\eps} (\sigma_0 \sigma_0^*+a_0 \sigma^* +\bar a_0 \sigma) \phi_\eps ,\phi_\eps)_{L^2(M)} + |a_0|^2 \|\phi_\eps\|_{L^2(M)}^2	\\
	&=\|(\Op^{\eps} (\sigma_0) +a_0 \id )^*  \phi_\eps \|^2_{L^2(M)} +O(\eps),
	\end{align*}
having proceeded as above for $\ell_\eps$. Hence, $\tilde \ell_0 (\sigma\sigma^*) \geq 0$ 
for any $\sigma\in \cA_0\oplus \bC \id$ and then by density for any $\sigma \in C^*(M\times \Gh)\oplus\bC \id$.
In other words, $\tilde \ell_0$ is a continuous positive linear functional on $C^*(M\times \Gh)\oplus\bC \id$. 
We keep the same notation $\tilde \ell_0$ for the weakly continuous linear functional on the corresponding enveloping von Neumann algebra $L^\infty (M\times \Gh)$.
As any approximate identity of $C^*(M\times \Gh)$  converges weakly to $\id$ in $L^\infty (M\times \Gh)$,  we have
$
\lim_{t\to 0}\tilde \ell_0(\sigma_{\kappa,t}) = \tilde \ell_0 (\id) $
 for an approximate identity
 $\sigma_{\kappa,t}$, $t>0$, from Proposition \ref{prop_algcA}.
 We see $\tilde \ell_0 (\id) =c_0^2$.
As $\ell_0$ and $\tilde \ell_0$ coincide on  $C^*(M\times \Gh)$,
we have $\ell_0(\sigma_{\kappa,t})=\tilde \ell_0(\sigma_{\kappa,t})$.
We can now conclude the proof with \eqref{eq_normell0}.
\end{proof}

\subsection{The dual of $C^*(M\times \Gh) $ in terms of operator-valued measures}
\label{subsec_dualC*MGh}
Proposition \ref{prop_scL}  leads us to seek a better description for the positive linear functionals on $C^*(M\times \Gh) $. 
This will be  given in terms of operator-valued measures.

\subsubsection{Operator-valued measures} 
Let us recall the notion of operator-valued measure as defined in  \cite[Section 5]{FFPisa} and \cite[Section 2.6]{FFJST}:
\begin{definition}
\label{def_gammaGamma}
	Let $Z$ be a complete separable metric space, 
	and let $\xi\mapsto {\cH}_\xi$ a measurable field of complex Hilbert spaces of $Z$.
Denote by 
	$ \widetilde{\cM}_{ov}(Z,({\cH}_\xi)_{\xi\in Z})$
	 the set of pairs $(\gamma,\Gamma)$ where $\gamma$ is a positive Radon measure on~$Z$ 
	and $\Gamma=\{\Gamma(\xi)\in {\mathcal L}({\cH}_\xi):\xi \in Z\}$ is a measurable field of trace-class operators
such that
\begin{equation}
\label{eqdef_cMnorm}
\int_Z{\rm Tr}_{{\cH}_\xi} |\Gamma(\xi)|d\gamma(\xi)
<\infty.
\end{equation}

Two pairs $(\gamma,\Gamma)$ and $(\gamma',\Gamma')$ 
in $\widetilde {\cM}_{ov}(Z,({\cH}_\xi)_{\xi\in Z})$
are \emph{equivalent} when there exists a measurable function $f:Z\to \mathbb C\setminus\{0\}$ such that 
$$d\gamma'(\xi) =f(\xi)  d\gamma(\xi)\;\;{\rm  and} \;\;\Gamma'(\xi)=\frac 1 {f(\xi)} \Gamma(\xi)$$ for $\gamma$-almost every $\xi\in Z$.
The equivalence class of $(\gamma,\Gamma)$ is denoted by $\Gamma d \gamma$,
and the resulting quotient set is 
 denoted by ${\cM}_{ov}(Z,({\cH}_\xi)_{\xi\in Z})$.

A pair $(\gamma,\Gamma)$ 
in $ \widetilde {\cM}_{ov}(Z,({\cH}_\xi)_{\xi\in Z})$
 is \emph{positive} when 
$\Gamma(\xi)\geq 0$ for $\gamma$-almost all $\xi\in Z$.
In  this case, we may write  $(\gamma,\Gamma)\in  \widetilde {\cM}_{ov}^+(Z,({\cH}_\xi)_{\xi\in Z})$, 
and $\Gamma d\gamma \geq 0$ for $\Gamma d\gamma \in {\cM}_{ov}^+(Z,({\cH}_\xi)_{\xi\in Z})$.
\end{definition}

The quantity in \eqref{eqdef_cMnorm}  is constant on the equivalence class of $(\gamma,\Gamma)$
and is denoted by 
$$
\|\Gamma d \gamma\|_{\cM}.
$$
It is a norm on ${\cM}_{ov}(Z,({\cH}_\xi)_{\xi\in Z})$, which is then a Banach space. 

\subsubsection{The dual of $C^*(M\times \Gh) $}
Proceeding as in \cite[Section 5]{FFPisa} and \cite[Section 3.1]{FFJST}, we can identify the spectrum of $C^*(M\times \Gh) $ with $M\times \Gh$ and  the states of the $C^*$-algebra $C^*(M\times \Gh) $ with operator-valued measures. 
Following \cite[Section 3.1]{FFJST} and \cite[Section 5]{FFPisa}, we obtain the following description of the states of $C^*(M\times \Gh) $ as operator-valued measures:
for any pair $(\gamma,\Gamma) \in \widetilde {\cM}_{ov}(M\times \Gh)$, the linear functional $\ell_{\gamma, \Gamma}$ defined on $C^*(M\times \Gh) $ via 
$$
\ell_{\gamma, \Gamma}(\sigma) := \iint_{M\times \Gh} {\rm Tr}\left(\sigma(x,\pi) \Gamma(x, \pi)\right) d \gamma(x, \pi), \quad \sigma\in C^*(M\times \Gh),
$$
is continuous. Furthermore, it is independent of the equivalence class of $(\gamma,\Gamma)$, and  any continuous linear functional on $C^*(M\times \Gh) $ is of this form and is uniquely determined by a class $\Gamma d\gamma$.  In other words, the mapping 
$$
\left\{\begin{array}{rcl}
 {\cM}_{ov}(M\times \Gh) &\longrightarrow & (C^*(M\times \Gh))^* \\
\Gamma d\gamma &\longmapsto & \ell_{\gamma, \Gamma}=\ell_{\Gamma d \gamma}
\end{array}\right.
$$
is an isomorphism of Banach  spaces.
The states of $C^*(M\times \Gh) $ are then the linear functional $\ell_{\gamma, \Gamma}$  of norm one
with  $\Gamma d\gamma \in {\cM}^+_{ov}(M\times \Gh)$.

\subsubsection{Semi-classical measures}
Theorem \ref{thm_scL} in the introduction follows readily from  Proposition \ref{prop_scL} and the description above. 
In Theorem \ref{thm_scL}, 
$\Gamma d\gamma \in {\cM}^+_{ov}(M\times \Gh)$ is called a \emph{semi-classical measure} 
of the family $(\phi_\eps)_{\eps\in (0,1]}$ 
at scale $\eps$, or the semi-classical measure
for the  sequence $(\eps_k)_{k\in \bN}$. 
We extend this vocabulary to the  context of  a bounded sequence of functions $(\phi_j)_{j\in \bN}$ in $L^2(M)$ and a map $j\mapsto \eps_j$ valued in $(0,1]$ satisfying $\lim_{j\to \infty} \eps_j=0$.

We observe that under certain hypotheses, 
the semi-classical limits of a bounded family $(\phi_\eps)$ in $L^2(M)$ yield the accumulation points of the family of measures $|\phi_\eps(\dot x)|^2d\dot x$ for the weak-star topology.

\begin{definition}
	Let $(\phi_\eps)_{\eps\in (0,1]}$	be a bounded family in $L^2(M)$
	and let $\cR$ be a positive Rockland operator on $G$.

The family $(\phi_\eps)$
 is \emph{uniformly $\eps$-oscillating} with respect to $\cR_M$ when 
$$
\limsup_{\eps\to 0}\| 1_{\{\eps^\nu \cR_M \geq R\}} \phi_\eps\|_{L^2(M)} \longrightarrow_{R\to \infty} 0
$$
If in addition, 
$$
\limsup_{\eps\to 0}\| 1_{\{\eps^\nu \cR_M \leq \delta\}} \phi_\eps\|_{L^2(M)} \longrightarrow_{\delta\to 0} 0
$$
it is \emph{uniformly strictly $\eps$-oscillating}.
\end{definition}
We extend this vocabulary to the  context of  a bounded family of functions $(\phi_j)_{j\in J}$ in $L^2(M)$ and a map $j\mapsto \eps_j$ satisfying $\lim_{j\to \infty} \eps_j=0$. 

It is a routine exercise 
to check that if a bounded family $(\phi_\eps)_{\eps\in (0,1]}$ of $L^2(M)$ satisfies 
$$
\exists s>0, \qquad
\sup_{\eps \in (0,1]}\|(\eps^\nu \cR_M)^s \phi_\eps\|_{L^2(M)}<\infty.
$$
then it is uniformly $\eps$-oscillating for $\cR$, while if
$$
\exists s>0, \qquad
\sup_{\eps \in (0,1]}\|(\eps^\nu \cR_M)^s \phi_\eps\|_{L^2(M)}+
\|(\eps^\nu \cR_M)^{-s} \phi_\eps\|_{L^2(M)}<\infty.
$$
then it is uniformly strictly $\eps$-oscillating. 

Proceeding as in  \cite[Proposition 4.1]{FFJST}, we obtain:
\begin{proposition}
\label{prop_osc}
	Let $(\phi_\eps)_{\eps\in (0,1]}$ be a bounded family in $L^2(M)$ that  is uniformly $\eps$-oscillating for $\cR$.
Let $\Gamma d\gamma$ be a semi-classical measure, and denote by $(\eps_k)$ the corresponding subsequence. 
Then for any $f\in \cD(M)$, we have
$$
 (f\phi_{\eps_k} ,\phi_{\eps_k})_{L^2(M)} \longrightarrow_{k\to \infty} \iint_{M\times \Gh} f(\dot x)  {\rm Tr}\left( \Gamma(\dot x, \pi)\right) d \gamma(\dot x, \pi).
 $$
If $(\phi_\eps)$ is in addition uniformly strictly $\eps$-oscillating, then the semi-classical measure does not charge the trivial representation $1_{\Gh}$ in the sense that 
$$
\gamma (M\times \{1_{\Gh}\}) =0.
$$
\end{proposition}

The proof is omitted due to its similarity to the one of \cite[Proposition 4.1]{FFJST}: the only difference is that the symbols depend on $x\in G$ in \cite{FFJST} instead of  $\dot x\in M$ here, while the difficulty is the analysis on $\Gh$.  
Consequently, with the notation of the statement, 
the weak-star limit of $|\phi_{\eps_k}(\dot x)|^2 d\dot x$ is equal to $\int_{\Gh}  {\rm Tr}\left( \Gamma(x, \pi)\right) d \gamma(\dot x, \pi)$.
It admits a further decomposition, see Remark \ref{rem_decw*lim}.   

 \subsection{The  decomposition $\Gh= \Gh_\infty \sqcup \Gh_1$}
 \label{subsec_LinftyMGh}
 
The unitary dual of any nilpotent Lie group may  be described as the disjoint union 
$$
\Gh = \Gh_\infty \sqcup \Gh_1,
$$
of the class of the infinite dimensional representations parametrised $\Gh_\infty$
with the class of the finite dimensional representations $\Gh_1$.
By the orbit method, the finite dimensional representations are of dimension one
and  may be identified with a closed subset of $\Gh$ given by  characters:
$$
\Gh_1:= \{\pi^\omega \ \mbox{given by} \ \chi_\omega(v) =e^{i \omega(v)} \ : \ \omega \in [\fg,\fg]^\perp  \},
$$
with $[\fg,\fg]^\perp $ denoting the subspace of linear forms $\omega\in \fg^*$ such that $\omega=0$ on $[\fg,\fg]$.
To fix the idea, we  identify 
$$
\Gh_1 \sim [\fg,\fg]^\perp \sim \fv^*  ,
$$ 
with the dual $\fv^*$ of a subspace $\fv$ of $\fg$; as we have already fixed a basis of $\fg$, we can choose $\fv$ to be the orthogonal complement for the corresponding scalar on $\fg$.

Any linear functional $\ell=\ell_{\Gamma d\gamma} \in (C^*(M\times \Gh))^{*}$ admits a  unique extension  to a weakly continuous linear functional on the von Neumann algebra $L^\infty(M\times \Gh)$ enveloping $C^*(M\times \Gh) $.
 The decomposition $\Gh= \Gh_\infty \sqcup \Gh_1$ will turn out to be useful to decompose these extended linear functionals. 
 Indeed, as the representation in $\Gh_1$ are of dimension one,  $1_{M \times \Gh_1} \Gamma d\gamma $ is a scalar valued measure. 
Hence, any operator valued measure $\Gamma d\gamma \in \cM_{ov}(M\times \Gh)$ may be decomposed into the sum of one scalar valued measure and one operator-valued measure:
\begin{equation}
	\label{eq_GdgdecGh}
	\Gamma d\gamma = 1_{M \times \Gh_1}  \Gamma d\gamma 
+
1_{M \times \Gh_\infty}
\Gamma d\gamma .
\end{equation}

Let us understand this decomposition on linear functionals $\ell$.
First, we will need  the following observation:
\begin{ex}
\label{ex_1MGh1infty}
	The symbols $1_{M\times \Gh_1}$ and $1_{M\times \Gh_\infty}$ are in $L^\infty(M\times\Gh)$.
	Therefore, for any $\sigma\in C^*(M\times \Gh) $, the symbols $\sigma 1_{M\times \Gh_1}$ and $\sigma 1_{M\times \Gh_\infty}$ are in 
$L^\infty(M\times \Gh)$.
\end{ex}
This example together with the natural extension of any functional $\ell=\ell_{\Gamma d\gamma}\in (C^*(M\times \Gh))^*$ to $L^\infty(M\times \Gh)$ shows that the following decomposition makes sense: 
$$
\forall \sigma\in C^*(M\times \Gh) \qquad 
\ell (\sigma) = \ell(\sigma 1_{M \times \Gh_1}) +
\ell(\sigma 1_{M \times \Gh_\infty}).
$$

The operator-valued measure $1_{M \times \Gh_1}  \Gamma d\gamma $ is a scalar measure on $M\times \Gh_1 \sim M\times \fv^*$.
We may assume that $\Gamma=1$ on 
$M \times \Gh_1$, and by convention we do. 
The following lemma shows that a symbol $\sigma 1_{M\times \Gh_1}$, $\sigma\in C^*(M\times \Gh) $, may be identified with an element of  $C_0(M\times \fv^*)$; 
this provides a simpler description of the scalar measure $1_{M \times \Gh_1}\gamma$ that  will be useful in Proposition \ref{prop_meascomcR} (2):

\begin{lemma}
\label{lem_C0}
\begin{enumerate}
	\item Any $\sigma\in \cA_0$, $\sigma 1_{M\times \Gh_1}$,  identified with the restriction of $\sigma$ to $M\times \Gh_1$, coincides with the  element in $C^\infty(M;\cS(\fv^*))$
$$
\dot x \longmapsto (\omega\mapsto \sigma (\dot x ,\pi^\omega)) .
$$
\item 
Moreover, 
the closure for $L^\infty(M\times \Gh)$ of the algebra of symbols $\sigma 1_{M\times \Gh_1}$, 
	$\sigma\in \cA_0$, is equal to $\{\sigma 1_{M\times \Gh_1}, \sigma\in C^*(M\times \Gh) \}$ and is
	naturally isomorphic to the $C^*$-algebra $C_0(M\times \fv^*)$ of continuous functions on $M\times \fv^*$ vanishing at infinity. 
	\end{enumerate}
\end{lemma}

\begin{proof}
Before starting the proof, we observe that we can  choose the canonical basis $X_1,\ldots, X_n$ adapted to the gradation of $\fg$ and to the decomposition $\fg= \fv \oplus [\fg,\fg]$.
This allows us to define the Lebesgue measures $dV$ and $dZ$ on $\fv$ and $[\fg,\fg]$ such that $dVdZ$ is the exponential pull back of the Haar measure: 
$$
\forall f\in L^1(G)\qquad 
\int_G f(x) dx = \iint_{\fv\times [\fg,\fg]} f(\exp (V+Z)) dV dZ.
$$ 
The derived group $[G,G]$, whose Lie algebra is $[\fg,\fg]$, is therefore equipped with a compatible Haar measure:
$$
\int_{[G,G]} f(z) dz
=
\int_{[\fg,\fg]} f(\exp Z) dZ.
$$

Let $\sigma\in \cA_0$, and denote its kernel by $\kappa_{\dot x}(y)$.
Then 
\begin{equation}
\label{eq_pflem_C0}
	\sigma(\dot x,\pi^\omega) = \int_\fv \kappa_{\dot x} (\exp (V + Z)) \overline \chi_\omega(V) dV dZ
=
\cF_\fv (\int_{[\fg,\fg]} \kappa_{\dot x})(\omega),\qquad \dot x\in M, \ \omega\in \fv^*,
\end{equation}
where $\cF_\fv$ denotes the Euclidean Fourier transform on $\fv$, 
and $\int_{[G,G]}$ the integration on $[G,G]$. 
Since $\kappa\in C^\infty (M;\cS(G))$, Part (1) follows. 

The computation \eqref{eq_pflem_C0} also implies that 
given any $t\in C^\infty (M;\cS(\fv^*))$, we can find a symbol $\tau \in \cA_0$ such that $\tau 1_{M\times \Gh_1}$ coincides with $t$. 
Indeed, fixing a function $\chi\in \cD([\fg,\fg])$ satisfying $\int_{[\fg,\fg]}\chi(Z) dZ=1$,
 the kernel
$\kappa$ defined via
$$
\kappa_{\dot x} (\exp (V+Z)) = \chi (Z) \cF_\fv^{-1} t (\dot x ,\cdot), \qquad V\in \fv,\ Z\in [\fg,\fg],
$$
is in  $C^\infty (M;\cS(G))$, 
and by \eqref{eq_pflem_C0},  
its symbol $\sigma$ satisfies $\sigma(x,\pi^\omega) = t(\dot x,\omega)$ for all 
$(\dot x,\omega)\in M\times \Gh_1$.
Consequently, 
the $L^\infty(M\times \Gh)$ -closure  of  $\{\sigma 1_{M\times \Gh_1}, 
	\sigma\in \cA_0\}$, identifies naturally 
	with the closure of $C^\infty (M;\cS(\fv^*))$ for the supremum norm on $M\times \fv^*$, 
	and therefore identifies naturally with  $C_0(M\times \fv^*)$.

Part (1) implies that the $L^\infty(M\times \Gh)$ -closure  of  $\{\sigma 1_{M\times \Gh_1}, 
	\sigma\in C^*(M\times \Gh) \}$ also identifies with a $C^*$-sub-algebra of $C_0(M\times \fv^*)$.
It is straightforward to check that 
$$
\forall \sigma\in C^*(M\times \Gh) \qquad 
\|\sigma 1_{M\times \Gh_1}\|_{L^\infty (M\times \Gh)}
\leq \|\sigma\|_{L^\infty (M\times \Gh)}.
$$
So the $L^\infty(M\times \Gh)$ -closure  of  $\{\sigma 1_{M\times \Gh_1}, 
	\sigma\in \cA_0\}$, is a $C^*$-subalgebra of  $\{\sigma 1_{M\times \Gh_1}, \sigma\in C^*(M\times \Gh) \}$.
As the former identifies with the whole of $C_0(M\times \fv^*)$, they are both equal to $C_0(M\times \fv^*)$.
\end{proof}

\begin{remark}
\label{rem_decw*lim}
Let us consider the setting of Proposition \ref{prop_osc}:
let $\Gamma d\gamma$ be the semi-classical limit of a bounded family $(\phi_\eps)$ in $L^2(M)$ that is $\eps$-oscillating for a positive Rockland operator $\cR_M$ and a subsequence $\eps_j$.
 Then we have obtained a further decomposition of the weak-star limit 
\begin{align*}
	\lim_{k\to \infty} |\phi_{\eps_k}(\dot x)|^2 d\dot x
&= \int_{\Gh} \tr (\Gamma (\dot x,\pi)) d\gamma (\dot x,\pi)
\\&= \int_{\Gh_1}  d\gamma (\dot x,\pi)
+\int_{\Gh_\infty} \tr (\Gamma (\dot x,\pi)) d\gamma (\dot x,\pi).
\end{align*}
With different means (in fact, with the Euclidean micro-local analysis), this observation was already obtained in particular cases that essentially boil down to  Heisenberg nilmanidolds, see \cite{CdvHT,letrouit}.
\end{remark}

\section{Quantum limits for $\cR_M$}
\label{sec_QLR}

In this section, we study the semi-classical limit in our theory associated with a sequence of eigenfunctions of $\cR_M$, where $\cR$ is a (fixed) positive Rockland operator.
We obtain properties of localisation and invariance in Proposition \ref{prop_sclmR}
and Corollary \ref{cor_prop_sclmR}
 below, described more explicitly in the case of sub-Laplacians in Section \ref{subsubec_caseL}.
 The proof requires to analyse symbols and operator-valued measures commuting with $\widehat \cR$ in Section \ref{subsec_symbcomR}.

\subsection{Family of $\cR_M$-eigenfunctions}
\label{subsec_sclmR}

Let $\cR$ be a positive Rockland operator on $G$ and denote by $\cR_M$  the corresponding operator on $M$, as well as its  self-adjoint extension to $L^2(M)$. Recall that 
the spectral decompositions of $\cR$ and $\pi(\cR)$ for $\pi\in \Gh$ are denoted by 
 $E$ and $\pi(E)$, see Section \ref{subsec_cR}.
 Moreover,   $\pi^\omega(\cR)$ is the (scalar) eigenvalue of $\cR$ for $\chi_\omega$: $\cR \chi_\omega = \pi^\omega(\cR) \chi_\omega$.

We can already obtain properties of localisation of the semi-classical measures of a sequence of $\cR_M$-eigenfunctions with eigenfunctions going to infinity:
\begin{proposition}
\label{prop_sclmR}
	Let $(\phi_j)_{j\in \bN}$ be a sequence of eigenfunctions  for $\cR_M$:
 $$
 \cR_M \phi_j = \mu_j  \phi_j, \quad j=0,1,2,\ldots. 
 $$
 Assume $\mu_j \to \infty$ as $j\to \infty$. 
   Consider a semi-classical measure $\Gamma d\gamma$ of $(\phi_j)$ at scale $\mu_j^{-1/\nu}$ for the subsequence $(j_k)$ where $\nu$ is the degree of homogeneity of $\cR$.
 We have for $\gamma$-almost all $(\dot x,\pi)\in M\times \Gh$
 $$
 \Gamma (\dot x,\pi) = \pi(E_1) \Gamma (\dot x,\pi) \pi(E_1).
 $$
The decomposition \eqref{eq_GdgdecGh} of $\Gamma d\gamma$ according to $\Gh = \Gh_1 \sqcup \Gh_\infty$ 
   satisfies the following properties:
  \begin{enumerate}
  	\item   The scalar valued measure
 $1_{M\times \Gh_1}\gamma$ on $M\times \Gh_1$ is supported in 
 $$
 M\times \{\pi^\omega\in \Gh_1: \pi^\omega(\cR)=1\}\sim M\times \{\omega\in \fv^* : \cR \chi_\omega= \chi_\omega\}.
 $$  
\item 
For $\gamma$-almost all $(\dot x, \pi)\in M\times \Gh_\infty$, the operator
  $\Gamma(\dot x,\pi)$ maps the finite dimensional 1-eigenspace for $\pi(\cR)$ onto itself and is trivial anywhere else.
 \end{enumerate}
 \end{proposition}
 
 \begin{proof}
 The properties of the functional and semi-classical calculi imply for any $\sigma\in \cA_0$
 $$
 (\Op^{(\eps)} (\sigma \widehat \cR) \phi_j, \phi_j)
 =
 (\Op^{(\eps)} (\sigma) \eps^\nu \cR_M \phi_j, \phi_j)
 =
 \eps^\nu \mu_j (\Op^{(\eps)} (\sigma) \phi_j, \phi_j).
 $$
Similarly, since $\cR$ is self-adjoint, we have:
 \begin{align*}
 (\Op^{(\eps)} ( \widehat \cR\sigma) \phi_j, \phi_j)
 &=
 (\eps^\nu \cR \Op^{(\eps)} (\sigma) \phi_j, \phi_j) + O(\eps)
=
\eps^\nu  (\Op^{(\eps)} (\sigma) \phi_j, \cR  \phi_j) + O(\eps)
\\& =
 \eps^\nu \mu_j (\Op^{(\eps)} (\sigma) \phi_j, \phi_j) +O(\eps).
 \end{align*}
  We take $\eps_j = \mu_j^{-1/\nu}$ and $j=j_k$, and then the limit as $k\to \infty$ to obtain for any $\sigma\in \cA_0$:
 \begin{equation}
 \label{eq_int_pfprop_sclmR}
 	 \int_{M\times \Gh} \tr (\sigma \widehat \cR \Gamma) d\gamma
 =
   \int_{M\times \Gh} \tr (\sigma  \Gamma) d\gamma
 =
   \int_{M\times \Gh} \tr (\widehat \cR \sigma \Gamma ) d\gamma.
 \end{equation}
 
 As explained in Section \ref{subsec_LinftyMGh}, 
 any linear function $\ell=\ell_{\Gamma d\gamma}$ admits a natural unique extension to a continuous linear function on the von Neumann algebra $L^\infty(M\times \Gh)$.
 Hence, \eqref{eq_int_pfprop_sclmR} is also valid for any symbol $\sigma$ in $L^\infty(M\times \Gh)$, 
 in particular for  $\sigma 1_{M\times \Gh_1}$ and also $\sigma 1_{M\times \Gh_\infty} \widehat E_\lambda$ or $\widehat E_\lambda \sigma 1_{M\times \Gh_\infty} $ below.
 Consequently, \eqref{eq_int_pfprop_sclmR} also holds for $\sigma 1_{M\times \Gh_1}$, $\sigma\in \cA_0$, and 
      we obtain 
   $$
   \forall f\in C_0(M\times \fv^*)\qquad
   \qquad 
 \int_{M\times \fv^*} f(\dot x,\omega) \pi^\omega(\cR) d\gamma(\dot x,\pi^\omega)
 =
 \int_{M\times \fv^*} \sigma(\dot x,\pi^\omega)  d\gamma(\dot x,\pi^\omega).
   $$
   Part (1)  follows. 
  Equality   \eqref{eq_int_pfprop_sclmR}
 also holds for any $\sigma 1_{M\times \Gh_\infty} \widehat E_\lambda$ with $\lambda\in \bR$ fixed, see \eqref{eq_Elambda}.
 Since we have $\widehat E_\lambda \widehat \cR = \widehat E_\lambda \lambda$,  we obtain for any $\sigma\in \cA_0$:
  $$
 \int_{M\times \Gh_\infty} \tr (\sigma \widehat E_\lambda \lambda \Gamma) d\gamma
 =
\int_{M\times \Gh_\infty} \tr (\sigma \widehat E_\lambda \widehat \cR \Gamma) d\gamma
 =
   \int_{M\times \Gh_\infty} \tr ( \sigma \widehat E_\lambda  \Gamma) d\gamma.
    $$
 By uniqueness, we have
 $ \lambda \widehat E_\lambda \Gamma  
 = 
 \widehat E_\lambda \Gamma$, which means that the projection of the image of $\Gamma (x,\pi)$ 
 onto the (finite dimensional) $\lambda$-eigenspaces of $\pi(\cR)$
  is zero  except perhaps for $\lambda =1$.
 Similarly,   Equality   \eqref{eq_int_pfprop_sclmR}
 also holds for any $\widehat E_\lambda\sigma 1_{M\times \Gh_\infty} $ so
 $ \lambda \Gamma  \widehat E_\lambda 
 = 
  \Gamma \widehat E_\lambda$, which means that $\Gamma (x,\pi)=0$ on all the (finite dimensional) $\lambda$-eigenspaces of $\pi(\cR)$ except perhaps for $\lambda =1$. 
This concludes the proof of Part (2). 
     \end{proof}

\subsection{Invariance properties}  
  \label{subsubsec_motivationsymbcomR}

  The properties of the semi-classical calculus  can give us further information on the semi-classical measures, in particular properties of invariance. 
 Indeed, the property \eqref{eq_Opbracketvarphi}  implies  for any $\sigma\in \cA_0$  commuting with $\widehat \cR$,  
 $$
\forall \eps\in (0,1],\ j\in \bN\qquad
(\Op^{(\eps)}(\sE  \sigma)\phi_j,\phi_j) =   O(\eps),
$$
where $\sE := \sum_{[\alpha] = \upsilon_1} \Delta^{\alpha}\widehat \cR X_{\dot x}^{\alpha}.$
Taking $\eps=\eps_j$ and $j\to \infty$, Proposition \ref{prop_sclmR} implies the following property of invariance:
$$
 \iint_{M\times \Gh} {\rm Tr}\left(\sE \sigma\ \Gamma\right) d \gamma=0.
$$
The rest of this section is devoted to showing the following lemma stating
 that the equality just above also holds on $M\times \Gh_1$ and on $M\times \Gh_\infty$ separately:
\begin{corollary}
\label{cor_prop_sclmR}
We continue with the setting of Proposition \ref{prop_sclmR}.
We have 
$$
\forall f\in \cD(M\times \fv^*)\qquad\iint_{M\times \Gh_1} \sum_{[\alpha]=\upsilon_1}  \Delta^\alpha \widehat \cR (\chi_\omega)  \ X_{M}^\alpha f(\dot x,\omega)\  d \gamma(\dot x,\pi^\omega)=0,
$$
and 
$$
\forall \sigma\in \cA_0^{\widehat \cR}\qquad 
 \iint_{M\times \Gh_\infty} {\rm Tr}\left(\sE \sigma \ \Gamma\right) d \gamma=0.
$$
\end{corollary}
The proof of Corollary \ref{cor_prop_RM} will be given  in Section \ref{subsubsec_pfcor_prop_sclmR}. It will rely on the properties of symbols commuting with $\widehat \cR$ we now present.

\subsection{Symbols and operator-valued measures commuting with $\widehat \cR$}
\label{subsec_symbcomR}

\subsubsection{Spaces of symbols commuting with (the spectral decomposition of) $\widehat \cR$}

Here, we study the  spaces of symbols commuting with the spectral decompositions  $\widehat E$ of $\cR$, see Section \ref{subsec_cR}:
 \begin{definition}
We denote by 
$$
\cA_0^{\widehat \cR},\ (C^*(M\times \Gh))^{\widehat \cR}
\quad\mbox{and}\quad
L^\infty (M\times \Gh)^{\widehat \cR},
$$ the subsets of elements respectively in $\cA_0$, $C^*(M\times \Gh) $ and $L^\infty(M\times \Gh)$  that commute with $\widehat E(I)$ for every interval $I\subset\bR$.
\end{definition}

As may be expected, these spaces enjoy the following properties:

\begin{proposition}
\label{prop_symbcomcR}
\begin{enumerate}
\item A symbol $\sigma$ is in $\cA_0^{\widehat \cR}$ if and only if it commutes with $\widehat \cR$. 
The space $\cA_0^{\widehat \cR}$ is a subalgebra of $\cA_0$ that contains all the symbols given by  $a(\dot x) \psi (\pi(\cR))$, where $a\in C^\infty(M)$ and $\psi\in \cS(\bR)$. 
In particular, it contains the approximate identity of $C^*(M\times \Gh) $ given by $\psi_j(\widehat \cR)$, $j\in \bN$,
where  $\psi_j\in \cD(\bR)$ is valued in $[0,1]$ and satisfies $\psi_j(\lambda)=1$ on $\{|\lambda| \geq j\}$. 
\item The space $(C^*(M\times \Gh))^{\widehat \cR}$ is a $C^*$-subalgebra of $C^*(M\times \Gh) $. 
It  is the closure of $\cA_0^{\widehat \cR}$ for the $L^\infty(M\times \Gh)$-norm.
\item The space $L^\infty (M\times \Gh)^{\widehat \cR}$ is a von Neumann subalgebra of $L^\infty (M\times \Gh)$. It is the  von Neumann algebra envoloping $(C^*(M\times \Gh))^{\widehat \cR}$.
\end{enumerate}
\end{proposition}

\begin{proof}
If $\sigma\in \cA_0^{\widehat \cR}$, then $\sigma(\dot x,\pi)$ commutes with $\pi(\cR) = \int_{\bR} \lambda d \pi(E_\lambda)$ for all $(\dot x,\pi)\in M\times \Gh$. 
The converse holds  for any $\pi\in \Gh\setminus \{1_{\Gh}\}$ since the spectrum of $\pi(\cR)$ is a discrete unbounded subset of $(0,\infty)$ and its eigenspaces are finite dimensional,  see Section \ref{subsec_cR}.
The rest of Part (1) follows from routine checks and Hulanicki's theorem (Theorem \ref{thm_hula}).

As $\cA_0$ is dense in $C^*(M\times \Gh) $,  
$(C^*(M\times \Gh))^{\widehat \cR}$ is the closure of 
$\cA_0^{\widehat \cR}$ and it is a $C^*$-subalgebra of $C^*(M\times \Gh) $. This proves Part (2).
As the $C^*$-algebra $C^*(M\times \Gh) $ generates the von Neumann algebra $L^\infty (M\times \Gh)$ (for instance, in the sense of taking the closure for the Strong Operator Topology), 
the $C^*$-algebra $(C^*(M\times \Gh))^{\widehat \cR}$ generates the von Neumann algebra $L^\infty (M\times \Gh)^{\widehat \cR}$ which is a von Neumann subalgebra of $L^\infty (M\times \Gh)$. This proves Part (3). 
\end{proof}

 \subsubsection{Space of operator valued measures commuting with $\widehat E$}
We say that the pair $(\gamma,\Gamma) \in \widetilde \cM_{ov}(M\times \Gh)$ commutes with $\widehat E$ when 
 for any interval $I\subset \bR$, we have 
 $ \Gamma (\dot x,\pi) \pi(E(I))= \pi(E(I)) \Gamma (\dot x,\pi)$ for $\gamma$-almost all $(\dot x,\pi)\in M\times \Gh$. 
  We check readily that if  $(\gamma,\Gamma) \in \widetilde \cM_{ov}(M\times \Gh)$ commutes with $\widehat E$, then so does any pair equivalent to it. 
We then say that the operator valued measure $\Gamma d\gamma$ commutes with $\widehat E$. 
\begin{definition}
	We denote the space of $\Gamma d\gamma$ commuting with $\widehat E$ by 
$\cM_{ov}(M\times \Gh)^{\widehat \cR}.$ 
\end{definition} 

 As may be expected, this space may identified with the dual of $(C^*(M\times \Gh))^{\widehat \cR}$ amongst other properties: 
\begin{proposition}
\label{prop_meascomcR}
\begin{enumerate}
\item The space $\cM_{ov}(M\times \Gh)^{\widehat \cR}$ is a closed subspace of $\cM_{ov}(M\times \Gh)$.
Its  image under $\Gamma d\gamma \mapsto \ell_{\Gamma d\gamma}$ is the dual of $(C^*(M\times \Gh))^{\widehat \cR}$.
\item We have (with the closure for the $L^\infty(M\times \Gh)$-norm)
\begin{align*}
\overline{\{\sigma 1_{M\times \Gh_1}, \sigma\in \cA_0^{\widehat \cR}\}}
&=
\{\sigma 1_{M\times \Gh_1}, \sigma\in (C^*(M\times \Gh))^{\widehat \cR}\}
\\&=
\{\sigma 1_{M\times \Gh_1}, \sigma\in C^*(M\times \Gh) \}
\sim C_0(M\times \fv^*).	
\end{align*}
\end{enumerate}
\end{proposition}

The rest of this paragraph is devoted to the proof of Proposition \ref{prop_meascomcR}.
In order to prove Part 1, we will need the following result of functional analysis:
\begin{lemma}
\label{lem_FA}
Let $R$ be a self-adjoint operator on a Hilbert space $\cH$.
We assume that its spectrum $\sp(R)$ is discrete with accumulation point at infinity and that every eigenspace is
  finite dimensional.
  Denoting by $F_\zeta$ the  projection onto the $\zeta$-eigenspace, given any  operator $A\in \sL(\cH)$, we may define 
  $$
  A^R= \sum_{\zeta\in \sp(R)} F_\zeta A F_\zeta.
  $$
  Then $A^R$ is a bounded operator on $\cH$ that commutes with any $F_\zeta$, $\zeta\in \sp(R)$. Moreover, $\|A^R\|_{\sL(\cH)}\leq \|A\|_{\sL(\cH)}$ and if $A$ is in addition trace-class then so is $A^R$ with $\tr |A^R|\leq \tr |A|$.
\end{lemma}
\begin{proof}[Proof of Lemma \ref{lem_FA}]
Let $A\in \sL(\cH)$. Then 
$$
\|A^R u\|_\cH^2 = \sum_{\zeta\in \sp(R)} (F_\zeta A F_\zeta u,  A F_\zeta u)_\cH\leq \|A\|_{\sL(\cH_\pi)}^2\sum_{\zeta\in \sp(R)} \|F_\zeta u\|_\cH^2
=  \|A\|_{\sL(\cH_\pi)}^2 \| u\|_\cH^2,$$
since the $F_\zeta$'s are orthogonal projections satisfying $\sum_{\zeta\in \sp(R)} F_\zeta =\id_\cH$. 
This proves 	$\|A^R\|_{\sL(\cH)}\leq \|A\|_{\sL(\cH)}$. 
If $A$ is trace class then for any $B\in \sL(\cH)$,
$$
|\tr (A^R B)| = |\tr (A B^R) | \leq \tr |A| \|B^R\|_{\sL(\cH_\pi)} \leq \tr |A| \|B\|_{\sL(\cH_\pi)}
$$
showing $\tr |A^R|\leq \tr |A|$.
\end{proof}

\begin{proof}[Proof of Proposition \ref{prop_meascomcR} (1)]
With the spectral properties of $\pi(\cR)$  recalled in Section \ref{subsec_cR},  given  $(\Gamma,\gamma) \in \widetilde {\cM}_{ov}(M\times \Gh)$, we can define
the measurable field $ \Gamma^{(\widehat \cR)}$ as in Lemma \ref{lem_FA} via
$$
\Gamma^{(\widehat \cR)} (\dot x,\pi):= \sum_{\zeta\in \sp(\pi(\cR))} \pi(E_\zeta) \Gamma (\dot x,\pi) \pi(E_\zeta),
\qquad (\dot x,\pi)\in M\times \Gh;
$$
Clearly, $\Gamma^{(\widehat \cR)}$ is trace-class with 
$\tr |\Gamma^{(\widehat \cR)} (\dot x,\pi)|
\leq \tr |\Gamma (\dot x,\pi)|$
and commutes with $\pi(E_\zeta)$, $\zeta\in \sp(\pi(\cR))$.
This implies that  $( \Gamma^{(\widehat \cR)},\gamma)\in \widetilde \cM_{ov}(M\times \Gh)$ commutes with $\widehat E$. 
This construction passes through the quotient, defining a  map $\Gamma d\gamma  \mapsto \Gamma^{(\widehat \cR)} d\gamma$ on 
$\cM_{ov}(M\times \Gh)$.
We check readily that this map is linear, 1-Lipschitz for $\|\cdot\|_{\cM}$ and that its image is  $\cM_{ov}(M\times \Gh)^{\widehat \cR}$ where it is the identity; moreover, this is a continuous projection onto $\cM_{ov}(M\times \Gh)^{\widehat \cR}$.
Hence, $\cM_{ov}(M\times \Gh)^{\widehat \cR}$ is a closed subspace of $\cM_{ov}(M\times \Gh)$.

Let $\ell: (C^*(M\times \Gh))^{\widehat \cR} \to \bC$ be a continuous linear functional.
By the Hahn-Banach theorem, $\ell$ extends continuously to an element $\widetilde\ell$ of $(C^*(M\times \Gh))^*$. Let $\Gamma d\gamma \in \cM_{ov}(M\times \Gh)$ be such that $\widetilde\ell =\ell_{\Gamma d \gamma}$.
If $\sigma \in (C^*(M\times \Gh))^{\widehat \cR}$ then using the resolution $\sum_{\zeta \in \sp(\pi(\cR))} \pi(E_\zeta) = \id_{\cH_\pi}$ of the identity operator $\id_{\cH_\pi}$
and $\pi(E_\zeta)^2=\pi(E_\zeta)$, we have:
\begin{align*}
	\ell(\sigma)&= \widetilde\ell(\sigma) = \iint_{M\times \Gh} {\rm Tr}\left(\sigma(x,\pi) \Gamma(x, \pi)\right) d \gamma(x, \pi) \\
	&= \iint_{M\times \Gh} \sum_{\zeta \in \sp(\pi(\cR))} {\rm Tr}\left(\pi(E_\zeta)\sigma(x,\pi)\pi(E_\zeta) \Gamma(x, \pi)\right) d \gamma(x, \pi)	\\
	&= \iint_{M\times \Gh}  {\rm Tr}\left(\sigma(x,\pi) \Gamma^{(\widehat \cR)}(x, \pi)\right) d \gamma(x, \pi).
\end{align*}
In other words,  $\widetilde\ell =\ell_{\Gamma^{(\widehat \cR)} d \gamma}$ on $(C^*(M\times \Gh))^{\widehat \cR}$.
Consequently $\Gamma d\gamma \mapsto \ell_{\Gamma d\gamma}\big|_{(C^*(M\times \Gh))^{\widehat \cR}}$ maps 
$\cM_{ov}(M\times \Gh)^{\widehat \cR}$ into $((C^*(M\times \Gh))^{\widehat \cR})^*$.
 This map is continuous and linear. It remains to show that it is injective. 
 
 Let $\Gamma d\gamma \in \cM_{ov}(M\times \Gh)^{\widehat \cR}$ such that $\ell_{\Gamma d\gamma}=0$ on $(C^*(M\times \Gh))^{\widehat \cR}$.
 The natural extension of  $\ell_{\Gamma d\gamma}$ to $L^\infty(M\times \Gh)$ (for which we keep the same notation) will vanish on  $L^\infty(M\times \Gh)^{\widehat \cR}$.
  Note that if $\sigma\in C^*(M\times \Gh) $ and $\zeta\in \bR$ then $\widehat E_\zeta\sigma \widehat E_\zeta\in L^\infty(M\times \Gh)^{\widehat \cR}$ (see \eqref{eq_Elambda}),
 so  
$\ell (\widehat E_\zeta\sigma \widehat E_\zeta) = 0$.
With a similar computation as above, this shows that $\widehat E_\zeta \Gamma \widehat E_\zeta d\gamma =0$.
But this implies  $\Gamma^{(\widehat \cR)} d\gamma =0=\Gamma d\gamma $.
By linearity, this shows the injectivity. 
 \end{proof} 

\begin{proof}[Proof of Proposition \ref{prop_meascomcR} (2)]
By Lemma \ref{lem_C0} and its proof, we have
\begin{align*}
\overline{\{\sigma 1_{M\times \Gh_1}, \sigma\in \cA_0^{\widehat \cR}\}}
&=
\{\sigma 1_{M\times \Gh_1}, \sigma\in (C^*(M\times \Gh))^{\widehat \cR}\}
\\&\subseteq
\{\sigma 1_{M\times \Gh_1}, \sigma\in C^*(M\times \Gh) \}
\sim C_0(M\times \fv^*).	
\end{align*}
Let $\ell:\{\sigma 1_{M\times \Gh_1}, \sigma\in C^*(M\times \Gh) \}\to \bC$ be a continuous linear functional. It may be identified with a Radon measure $\gamma$ on $M\times \Gh_1 \sim M\times \fv^*$ via
$$
\ell (\sigma 1_{M\times \Gh_1}) = \int_{M\times \Gh_1} \sigma(\dot x, \pi^\omega) d\gamma(x,\pi^\omega).
$$
It is naturally extended into the operator-valued measure $\Gamma d\gamma  $ with $1_{M\times \Gh_1} \Gamma d\gamma = \gamma$ 
and $1_{M\times \Gh_\infty} \Gamma d\gamma =0$.
In other words, 
$$
\Gamma(\dot x,\pi)d\gamma (\dot x,\pi)
=\left\{\begin{array}{ll}
0&\mbox{if} \ \pi\in \Gh_\infty\\
	d\gamma (\dot x,\pi^\omega) & \mbox{if} \ \pi=\pi^\omega \in \Gh_1
	\end{array}\right.
	$$
As $\pi^\omega(E(I))$ and $\Gamma (\dot x, \pi^\omega)$ are scalar, they commute; this shows that  $\Gamma d\gamma$ commutes with $\widehat E$.  

If $\ell$ vanishes on $\{\sigma 1_{M\times \Gh_1}, \sigma\in (C^*(M\times \Gh))^{\widehat \cR}\}$ then $\ell_{\Gamma d\gamma}=0$ on $C^*(M\times \Gh)^{\widehat \cR}$ and $\Gamma d\gamma=0$ by Proposition \ref{prop_symbcomcR} (1), so $\gamma=0$ and $\ell=0$.

We have shown that if $\ell$ is a continuous linear functional on $
\{\sigma 1_{M\times \Gh_1}, \sigma\in C^*(M\times \Gh) \}$ vanishing on the subspace $\{\sigma 1_{M\times \Gh_1}, \sigma\in \cA_0^{\widehat \cR}\}$ then $\ell=0$. 
By the Hahn-Banach theorem, the closure of $\{\sigma 1_{M\times \Gh_1}, \sigma\in \cA_0^{\widehat \cR}\}$ is equal to $
\{\sigma 1_{M\times \Gh_1}, \sigma\in C^*(M\times \Gh) \}$.
\end{proof}  

\subsubsection{Proof of Corollary \ref{cor_prop_sclmR}}
\label{subsubsec_pfcor_prop_sclmR} 

In Section \ref{subsubsec_motivationsymbcomR}, 
 we obtained:
\begin{equation}
	\label{eq_pfcor_prop_sclmR}
	\forall \sigma\in \cA_0^{\widehat \cR}\qquad
\ell_{\Gamma d\gamma}(\sE \sigma)=0.
\end{equation}
By Proposition \ref{prop_meascomcR} (1), 
the extension of $\ell_{\Gamma d\gamma}|\cA_0^{\widehat \cR}$ to $L^\infty (M\times \Gh)^{\widehat \cR}$  coincides with the restriction to $L^\infty (M\times \Gh)^{\widehat \cR}$ 
of the extension of $\ell_{\Gamma d\gamma}$ to $L^\infty (M\times \Gh)$. 
Hence we can apply \eqref{eq_pfcor_prop_sclmR}  to the symbols $\sigma 1_{M\times \Gh_\infty}$ and $\sigma 1_{M\times \Gh_1}$ where $\sigma\in \cA_0^{\widehat \cR}$.
Furthermore, we can also apply \eqref{eq_pfcor_prop_sclmR}  to the symbol $f 1_{M\times \Gh_1}$ with $f\in \cD(M\times \fv^*)$ by Proposition \ref{prop_meascomcR} (2).
This shows Corollary \ref{cor_prop_sclmR}.

  \subsection{The case of sub-Laplacians}
  \label{subsubec_caseL}

In this section, we consider a stratified Lie group $G$, 
and fix a basis $X_1,\ldots, X_{n_1}$ of  $\fg_1$ (see Section \ref{subsec_gradedG}). 
We denote by $\cL = -X_1^2 -\ldots -X_{n_1}^2$ the associated sub-Laplacian on $G$.
This is a positive Rockland operator, 
and we can choose the first stratum $\fg_1=\bR X_1 \oplus \ldots \oplus \bR X_{n_1}$ as the complement $\fv$
of the derived algebra $[\fg,\fg] = \oplus_{i>1}\fg_i$.

In order to give a more concrete description of the objects in the previous sections in this particular case, 
we will need the following computations:

\begin{lemma}
\label{lem_compcL}
	\begin{enumerate}
		\item Decomposing an element $\omega =\sum_{j=1}^{n_1} \omega_j X_j^*$ of $\fg_1^*$ with respect to the basis dual to  $X_1,\ldots, X_{n_1}$, we have
$$
\chi_\omega(\exp (\sum_{j=1}^n x_j X_j)) = \exp(i \sum_{j=1}^{n_1} x_j \omega_j).
$$
Equipping $\fg_1$ with the scalar product that makes $X_1,\ldots, X_{n_1}$ orthonormal,  the  $\cL$-eigenvalue corresponding to $\chi_\omega$ is $|\omega|^2 = \sum_{j=1}^{n_1} \omega_j^2$:
$$
\widehat \cL \chi_\omega = |\omega|^2 \chi_\omega, 
\qquad \pi^\omega (\cL) = |\omega|^2.
$$
\item The operator $\sE$ defined in Section \ref{subsec_Qvar} is given here via:
$$
\sE = \sum_{[\alpha]=1} \Delta^\alpha \widehat \cL \ X_{\dot x}^\alpha = - 2\sum_{j=1}^{n_1} \widehat X_j X_{j,M} ,
$$
where $\widehat X_j= \{\pi(X_j): \pi\in \Gh\}$. 
In particular, $\sE$  acts on $M\times \Gh_1$ as  
$$
\sum_{[\alpha]=1} \Delta^\alpha \widehat \cL(\pi^\omega) \ X_{\dot x}^\alpha =  - 2 i
\sum_{j=1}^{n_1} \omega_j X_{j,M}.
$$
\end{enumerate}
\end{lemma}

\begin{proof}
	Part (1) is straightforward. 
	For Part (2), we can identify $G$ with $\bR^n$ via the exponential map and the choice of basis $X_j$, i.e. $(x_1,\ldots, x_n)\sim \exp_G\sum_j x_j X_j$. 
	The coordinates $q_j$ are then $q_j(x)=x_j$. 
	We compute easily for  $j=1,\ldots, n_1$:
	 $$
X_{j_1,y=0} (q_j(y)) = \delta_{j=j_1}, 
\qquad\mbox{and when}\ k\neq 1 \quad
X_{j_1,y=0}^k (q_j(y)) =0, 
$$
so 
\begin{align*}
\Delta_{q_j} \widehat X_{j_1}^2 (\pi)
&= X_{j_1,y=0}^2\left(q_j (y)\pi(y)\right)
\\&
= \left(X_{j_1,y=0}^2 q_j (y)\right) \pi(0)
+
2 \left( X_{j_1,y=0} q_j (y)\right) \left( X_{j_1,y=0}\pi(y)\right)
+
q_j (0)\left( X_{j_1,y=0}^2\pi(y)\right)
\\&= \delta_{j=j_1} 2 \widehat X_j (\pi),	
\end{align*}
therefore
$$
\Delta_{q_j} \widehat \cL (\pi) = 
- \sum_{j_1=1}^{n_1} \Delta_{q_j} \widehat X_{j_1}^2 (\pi)
= - 2 \pi(X_j)
$$
The statement follows. 	
\end{proof}

The advantage of considering sub-Laplacians is that the operation $\sE$ will lead to invariance under flows of vector fields as in the commutative case:
\begin{ex}
	We consider as in Section \ref{subsubsec_Tn} the case of the torus $\bT^n$. Then, the operator $\sE$ boils down to the vector field $-2\sum_j \partial_{\dot x_j}\partial_{\xi_j}$ on $\bT^n\times \bR^n$.
\end{ex}

Proposition \ref{prop_sclmR}, Corollary \ref{cor_prop_sclmR} and Lemma \ref{lem_compcL} yield in the case of $\cR_M=\cL_M$ the following properties of localisation and invariance for the semi-classical measures:

\begin{corollary}
\label{cor_subLinv}
	Let $(\phi_j)_{j\in \bN}$ be a sequence of eigenfunctions  for $\cL_M$ with
 $$
 \cL_M \phi_j = \mu_j  \phi_j, \quad 
 \mu_0<\mu_1\leq \mu_2\leq \ldots \leq \mu_j \longrightarrow_{j\to \infty} \infty.
 $$
   Consider a semi-classical measure $\Gamma d\gamma$ of $(\phi_j)$ at scale $\mu_j^{-1/2}$ for the subsequence $(j_k)$.
 We have for $\gamma$-almost all $(\dot x,\pi)\in M\times \Gh$
 $$
 \Gamma (\dot x,\pi) = \pi(E_1) \Gamma (\dot x,\pi) \pi(E_1).
 $$
The decomposition \eqref{eq_GdgdecGh} of $\Gamma d\gamma$ according to $\Gh = \Gh_1 \sqcup \Gh_\infty$ 
   satisfies the following properties:
  \begin{enumerate}
  	\item   The scalar valued measure
 $1_{M\times \Gh_1}\gamma$ on $M\times \Gh_1$ is supported in $M\times \{\omega\in \fg_1^* : |\omega|=1\}$
 and satisfies
 $$
\forall f\in \cD(M\times \fv^*)
\qquad
\iint_{M\times \fg_1} \sum_{j=1}^{n_1}  \omega_j  \ X_{j,M} f(\dot x,\omega)\  d \gamma(\dot x,\pi^\omega)=0.
$$
Consequently, it is invariant under the flow 
 $$
 (\dot x,\omega) \longmapsto (\exp (s \sum_{j=1}^{n_1} \omega_j X_{j,M} ) \dot x, \omega).
 $$
\item 
For $\gamma$-almost all $(\dot x, \pi)\in M\times \Gh_\infty$, the operator
  $\Gamma(\dot x,\pi)$ maps the finite dimensional 1-eigenspace for $\pi(\cL)$ onto itself and is trivial anywhere else.
  Moreover, we have  
   $$
 \iint_{M\times \Gh_\infty} {\rm Tr}\left(\widehat E_1 \sE \widehat E_1 \sigma\ \Gamma\right) d \gamma=
 \iint_{M\times \Gh_\infty} {\rm Tr}\left(\sE \sigma\ \Gamma\right) d \gamma=0,
$$
for any $\sigma\in \cA_0^{\widehat \cL}$,
where   
 $\sE = -2\sum_{j=1}^{n_1} \widehat X_j X_{j,M}$.
 \end{enumerate}	
\end{corollary}

\subsection{Comments}
\label{subsec_comments}

\subsubsection{Case of nil-manifolds of Heisenberg types}
Let us consider the particular case of Heisenberg nil-manifolds, or more generally of nil-manifolds of Heisenberg types, that is, nil-manifolds $M=\Gamma \backslash G$ with $G$ a group of Heisenberg type.

\begin{enumerate}

\item On the groups of Heisenberg type (see e.g. \cite[Appendix B]{FFJST}),
we have
$ \widehat E_1 \widehat X_j \widehat E_1=0$ for $j=1,\ldots, n_1$. 
This implies  readily on nil-manifolds  of Heisenberg type,
$$
\widehat E_1 \sE \widehat E_1
=
-2\sum_{j=1}^{n_1} \widehat E_1 \widehat X_j \widehat E_1
 \ X_{j,M} =0, 
$$
Corollary \ref{cor_subLinv}  (2) above does not give any information in this case.

	\item 
However, we can prove further invariances adapting the semi-classical analysis on groups of Heisenberg type, 
especially \cite[Lemma 4.1]{FFJST}.
This has led in \cite{FFFMilan} to further scalar invariances than the general ones described in Corollary \ref{cor_subLinv}.

In particular, by \cite[Theorem 2.4 (ii) (2b)]{FFFMilan}, 
	$1_{M\times \Gh_\infty}\Gamma d\gamma$
is invariant under $(x,\pi^\lambda)\mapsto (\exp s Z^{\lambda} x,\pi^\lambda)$, $s\in \bR$, where $\lambda$
 is a non-zero linear functional on the centre of the Lie algebra $\fg$ of $G$, $\pi^\lambda$ the corresponding representation via the orbit method, and $Z^\lambda$ the element of $\fg$ (viewed as a left-invariant vector field) corresponding to $\lambda$ by duality.
 Consequently, $\int_{\Gh_\infty} \Gamma d\gamma$ is a measure on $M$ invariant under central translation on $M$.

\item In the  case of the canonical sub-Laplacian on the Heisenberg nil-manifolds of dimension 3, a study of  quantum limits in the traditional Euclidean micro-local sense have been attempted in  \cite{letrouit}, see also \cite{CdvHT}.
Extensions beyond products of Heisenberg nil-manifolds of dimension 3
to slightly less simple case (e.g.    nil-manifolds of Heisenberg types) seem unlikely because of the increasing non-commutativity and the related increase in multiplicities of the eigenvalues of $\cL_M$.
\end{enumerate}

\subsubsection{More general cases}

\begin{enumerate}

\item The localisation and invariance given for $\Gh_1$  was obtained in \cite[Theorem 2.4]{FFFMilan}, but only for step-two nil-manifolds. Here,  Corollary \ref{cor_subLinv} (1) is proved  for any canonical sub-Laplacian on a nil-manifold. 
\item 
The localisation and invariance given for $\Gh_\infty$ obtained in \cite[Theorem 2.4]{FFFMilan} for step-two nil-manifolds is more refined than our result in Corollary \ref{cor_subLinv}.
Indeed, the analysis in \cite{FFFMilan} suggests to decompose further $\Gh_\infty$ in $\Gh = \Gh_1 \sqcup \Gh_\infty$ with the orbit method. 
Indeed, via the orbit method, $\Gh_\infty $ may be written as the disjoint union  of closed subsets $\Omega_d$ comprised of co-adjoint orbits corresponding to a given  dimension $d$ of the kernel of the associated skewsymmetric bilinear form. 
The analysis in \cite{FFFMilan} shows that 
each of these sets $\Omega_d$ will yield a different invariance on $M\times \Omega_d$.

In the case of a group of Heisenberg type, and more generally a Metivier group, $\Gh_\infty$ identifies with one such set, namely $\Omega_0$. However, for other two-step nilpotent Lie groups, $\Gh_\infty$ will decompose into various $\Omega_d$.  

\item 
It is not difficult to see that in the case when $\cR = \cL$ is a sub-Laplacian, 
the operator $\sE $ is self-adjoint on $L^2(M\times \Gh)$.
Hence the one-parameter group $e^{it\sE}$ is unitary on $L^2(M\times \Gh)$. 
It is not difficult to prove that it also acts on $\cA_0$, with furthermore $t \mapsto e^{it\sE}$ being a continuous map from $\bR$ to $\sL(\cA_0)$. 
 
In the commutative case, $M$ is a torus and it was easy to determine the kernel of $\sE$ or equivalently the subspace of $L^2(M\times \Gh)$  invariant under the action of the one-parameter group $e^{it\sE}$, see Lemma \ref{lem_thm_QETn} and its proof: it is the subspace of $\sigma\in L^2(M\times \Gh)$ such that $\int_{\bT^n} \sigma(\dot x,\pi) d\dot x =0$ for every $\pi\in \Gh$. 
In particular, $e^{it\sE} \sigma \longrightarrow_{t\infty} \int_{\bT^n} \sigma $.

In the non-commutative case, determining $\ker \sE$ or equivalently the $e^{it\sE}$-invariant subspaces of $L^2(M\times \Gh)$ is an open question as already discussed in the case of the nil-manifolds of Heisenberg type in Section \ref{subsubsec_obsgrHtype}.

%
\end{enumerate}

\end{document}